\newcommand{\SECO}{\mathrm{SECO}}
\newcommand{\MECO}{\mathrm{MECO}}
\newcommand{\Cov}{\mathrm{Cov}}
\newcommand{\indep}{\perp \!\!\! \perp}
\title{High-dimensional variable clustering based on maxima of a weakly dependent random process}
\author{Alexis Boulin}
\affiliation{Université Côte d’Azur, CNRS, LJAD, France}
\affiliation{Univ Montpellier, CNRS, Montpellier, France}
\email{aboulin@unice.fr}
\author{Elena Di Bernardino}
\affiliation{Université Côte d’Azur, CNRS, LJAD, France}
\author{Thomas Lalo\"{e}}
\affiliation{Université Côte d’Azur, CNRS, LJAD, France}
\author{Gwladys Toulemonde}
\affiliation{Univ Montpellier, CNRS, Montpellier, France}
\keywords{Asymptotic independence, Consistent estimation, Extreme value theory, High dimensional models, Variable clustering.}
\begin{document}

\begin{abstract}
We propose a new class of models for variable clustering called Asymptotic Independent block (AI-block) models, which defines population-level clusters based on the independence of the maxima of a multivariate stationary mixing random process among clusters. This class of models is identifiable, meaning that there exists a maximal element with a partial order between partitions, allowing for statistical inference. We also present an algorithm depending on a tuning parameter that recovers the clusters of variables without specifying the number of clusters \emph{a priori}. Our work provides some theoretical insights into the consistency of our algorithm, demonstrating that under certain conditions it can effectively identify clusters in the data with a computational complexity that is polynomial in the dimension. A data-driven selection method for the tuning parameter is also proposed. To further illustrate the significance of our work, we applied our method to neuroscience and environmental real-datasets. These applications highlight the potential and versatility of the proposed approach.
\end{abstract}

\section{Introduction}
\paragraph{Motivation} Multivariate extremes arise when two or more extreme events occur simultaneously. These events are of prime interest to assess natural hazard, stemming from heavy rainfall, wind storms and earthquakes since they are driven by joint extremes of several of meteorological variables. Results from multivariate extreme value theory show that the possible dependence structure of extremes satisfy certain constraints. Indeed, the dependence structure may be described in various equivalent ways (\cite{beirlant2004statistics}): by the exponent measure (\cite{balkema1977max}), by the Pickands dependence function (\cite{pickands1981multivariate}), by the stable tail dependence function (\cite{huang1992statistics}), by the madogram (\cite{naveau2009modelling}, \cite{boulin2021non}), and by the extreme value copula (\cite{gudendorf2009extremevalue}).

While the modeling of univariate and low-dimensional extreme events has been well-studied, it remains a challenge to model multivariate extremes, particularly when multiple rare events may occur simultaneously. Recent research in this area has focused on connecting the study of multivariate extremes to modern statistical and machine learning techniques. The general idea of the proposed methods is to identify groups of variables that may become large without affecting the others, also referred to as extreme direction. \cite{goix2016sparse} focus on identifying extreme directions, thus providing a sparse representation of the extremal dependence. \cite{chiapino2019identifying} proposed an incremental-type algorithm for scenarios with a high number of extreme directions. \cite{janssen2020k} identify extreme directions by adapting the spherical $K$-means (sKmeans) clustering algorithm to the extremal setting and construct a nonparametric estimator for the theoretical cluster centers. Lastly, \cite{meyer2021sparse, meyer2023multivariate} frame extreme directions within what they call sparse regular variation. Our work is aligned with these directions of research as we propose a clustering algorithm for learning the dependence structure of multivariate extremes and, withal, to bridge important ideas from modern statistics and machine learning to the framework of extreme-value theory.

It is possible to perform clustering on $\textbf{X}_1,\dots,\textbf{X}_n$, where $n$ is  the number of observations of a random vector $\textbf{X} \in \mathbb{R}^d$, through two different approaches: by partitioning the set of row indices $\{1,\dots,n\}$ or by partitioning the set of column indices $\{1,\dots,d\}$. The first problem is known as the data clustering problem, while the second is called the variable clustering problem, which is the focus of this paper. In data clustering, observations are drawn from a mixture distribution, and clusters correspond to different realizations of the mixing distribution, which is a distribution over all of $\mathbb{R}^d$.

The problem of variable clustering (see, e.g., \cite{10.1214/18-AOS1794}) involves grouping similar components of a random vector $\textbf{X} = (X^{(1)},\dots,X^{(d)})$ into clusters. The goal is to recover these clusters from observations $\textbf{X}_1, \dots, \textbf{X}_n$. Instead of clustering similar observations based on a dissimilarity measure, the focus is on defining cluster models that correspond to subsets of the components $X^{(j)}$ of $\textbf{X} \in \mathbb{R}^d$. The goal is to cluster similar variables such that variables within the same cluster are more similar to each other than they are to variables in other clusters. Variable clustering is of particular interest in the study of weather extremes, with examples in the literature on regionalization (\cite{bador2015spatial,bernard2013clustering, saunders2021regionalisation}), where spatial phenomena are observed at a limited number of sites. A specific case of interest is clustering these sites according to their extremal dependencies. This can be done using techniques such as $k$-means or hierarchical clustering with a dissimilarity measure designed for extremes. However, the statistical properties of these procedures have not been extensively studied, and it is not currently known which probabilistic models on $\textbf{X}$ can be estimated using these techniques. In this paper, we consider model-based clustering, where the population-level clusters are well-defined, offering interpretability and a benchmark to evaluate the performance of a specific clustering algorithm.
	
The assumption that data are realizations of independent and identically distributed (i.i.d.) random variables is a fundamental assumption in statistical theory and modeling. However, this assumption is often unrealistic for modern datasets or the study of time series. Developing methods and theory to handle departures from this assumption is an important area of research in statistics. One common approach is to assume that the data are drawn from a multivariate stationary and mixing random process, which implies that the dependence between observations weakens over the trajectory. This assumption is widely used in the study of non-i.i.d. processes.

Our contribution is twofold. First, we develop a probabilistic setting for Asymptotic Independent block (AI-block) models to address the problem of clustering extreme values of the target vector. These models are based on the assumption that clusters of components of a multivariate random process are independent relative to their extremes. This approach has the added benefit of being amenable to theoretical analysis, and we show that these models are identifiable (see Theorem \ref{thm:unicity}). Second, we motivate and derive an algorithm specifically designed for these models (see Algorithm \ref{alg:rec_pratic}). We analyze its performance in terms of exact cluster recovery for minimally separated clusters, using a cluster separation metric (see Theorem \ref{thm:grow_dim}). The issue is investigated in the context of nonparametric estimation  over block maxima of a multivariate stationary mixing random process, where the block length is a tuning parameter.

\paragraph{Notations} All bold letters $\textbf{x}$ correspond to vectors in $\mathbb{R}^d$. Let $O = \{O_g\}_{g= 1 ,\dots, G}$ be a partition of $\{1,\dots,d\}$ into $G$ groups and let $s : \{1,\dots,d\} \rightarrow \{1,\dots,G\}$ be a variable index assignment function, thus $O_g = \{a \in {1,\dots,d} : s(a) = g\} = \{i_{g,1}, \dots, i_{g,d_g}\}$ with $d_1+\dots+d_G = d$. Using these notations, the variable $X^{(i_{g,\ell})}$ should be read as the $\ell$th element from the $g$th cluster. By considering $B \subseteq \{1,\dots,d\}$, we denote the $|B|$-subvector of $\textbf{x}$ by $\textbf{x}^{(B)} = (x^{(j)}){j \in B}$. We denote by $\textbf{X} \in \mathbb{R}^d$ a random vector with cumulative distribution function $H$ and $\textbf{X}^{(B)}$ a random subvector of $\textbf{X}$ with marginal distribution $H^{(B)}$ whose domain is $\mathbb{R}^{|B|}$. Remark that when $B = \{1,\dots,d\}$, one has $H= H^{(B)}$. Classical inequalities of vectors such as $\textbf{x} > 0$ should be understood componentwise. The notation $\delta_x$ corresponds to the Dirac measure at $x$. Let $\textbf{X}^{(O_g)}$, $g \in \{1,\dots,G\}$ be random vectors with $\textbf{X} = (\textbf{X}^{(O_1)},\dots,\textbf{X}^{(O_G)})$, we recall that $\textbf{X}^{(O_1)},\dots, \textbf{X}^{(O_G)}$ are independent if and only if $H(\textbf{x}) = \Pi_{g=1}^G H^{(O_g)}\left(\textbf{x}^{(O_g)}\right), \textbf{x} \in \mathbb{R}^d$.
	
\paragraph{Structure of the chapter} In Section \ref{sec:variable_clust}, we provide background on extreme-value theory and describe the probabilistic framework of AI-block models. We show that these models are identifiable and provide a series of equivalent characterizations. In Section \ref{sec:estimation}, we develop a new clustering algorithm for AI-block models and prove that it can recover the target partition with high probability {under mixing conditions over the random process}. We provide a process that satisfies our probabilistic and statistical assumptions in Section \ref{sec:example}. We illustrate the finite sample performance of our approach on simulated datasets in Section \ref{sec:numerical_examples}. To exemplify further motivation for our research, we applied our method to real-data from neuroscience and environmental sciences, as discussed in Section \ref{sec:application}.

\section{A model for variable clustering}
\label{sec:variable_clust}

\subsection{Background setting} 
\label{subsec:framework}
	
Consider $\textbf{Z}_{t} = (Z_t^{(1)},\dots,Z_t^{(d)})$, where $t \in \mathbb{Z}$ is a strictly stationary multivariate random process. Let $\textbf{M}m = (M_m^{(1)}, \dots, M_m^{(d)})$ be the vector of component-wise maxima, where $M^{(j)}m = \max{i=1,\dots,m} Z_i^{(j)}$. Consider a random vector $\textbf{X} = (X^{(1)},\dots,X^{(d)})$ with cumulative distribution function $H$. A normalizing function $a$ on $\mathbb{R}$ is a non-decreasing, right continuous function that goes to $\pm \infty$ as $x \rightarrow \pm \infty$. In extreme value theory (see, for example, the monograph of \cite{beirlant2004statistics}), a fundamental problem is to characterize the limit distribution $H$ in the following limit:
\begin{equation}
\label{eq:domain_attraction}
\lim_{m \rightarrow \infty} \mathbb{P}\left\{ \textbf{M}_m \leq \textbf{a}_m(\textbf{x})\right\} = H(\textbf{x}),
\end{equation}
where $\textbf{a}_m = (a_m^{(1)},\dots,a_m^{(d)})$ with $a_m^{(j)}, 1 \leq j \leq d$ are normalizing functions and $H$ is a non-degenerate distribution. Typically, $H$ is an extreme value distribution, and $\textbf{X}$ is a max-stable random vector with generalized extreme value margins. In this case, we can write:
\begin{equation*}
\mathbb{P}\left\{ \textbf{X} \leq \textbf{x}\right\} = \exp \left\{ -\Lambda(E \setminus [0,\textbf{x}]) \right\},
\end{equation*}
where $\Lambda$ is a Radon measure on the punctured cone $E = [0,\infty)^d \setminus {\textbf{0}}$. When \eqref{eq:domain_attraction} holds with $H$ an extreme value distribution, the process $(\textbf{Z}_t, t \in \mathbb{Z})$ is said to be in the max-domain of attraction of the random vector $\textbf{X}$ with cumulative distribution function $H$, denoted as $\mathcal{L}((\textbf{Z}_t, t \in \mathbb{Z})) \in \mathcal{D}(H)$, where $\mathcal{L}((\textbf{Z}_t, t \in \mathbb{Z}))$ is the law of the stationary time series $(\textbf{Z}_t, t \in \mathbb{Z})$ on $(\mathbb{R}^{d})^{\mathbb{Z}}$. In our context of a dependent process $(\textbf{Z}_t, t \in \mathbb{Z})$, the limit in \eqref{eq:domain_attraction} will in general be different from a multivariate extreme value distribution, see, e.g., \cite[Section 4.1]{bucher2014extreme}, and further conditions over the regularity (or mixing conditions, see Appendix \ref{sec:details}) are thus needed to obtain a multivariate extreme value distribution. In particular, if the random process $(\textbf{Z}_t, t \in \mathbb{Z})$ is $\beta$-mixing, then \eqref{eq:domain_attraction} holds with $H$ a multivariate extreme value distribution.

The max-domain of attraction can be described in the language of copulae. Subsequently, we assume that the marginals of $Z_{1}^{(1)},\dots,Z_{1}^{(d)}$ are continuous and we denote by $C_m$ the unique copula associated with $\textbf{M}_m$. More precisely, the max-domain of attraction condition in Equation \eqref{eq:domain_attraction} is equivalent to a max-domain of attraction condition on the levels of copulae (see Condition \ref{ass:domain} below) and a max-domain of attraction for each margin.

\begin{Assumption}{$\mathcal{A}$}
		\label{ass:domain}
		There exists a copula $C_\infty$ such that
		\begin{equation*}
			\underset{m \rightarrow \infty}{\lim} C_m(\normalfont{\textbf{u}}) = C_\infty(\normalfont{\textbf{u}}), \quad \normalfont{\textbf{u}} \in [0,1]^d.
		\end{equation*}
	\end{Assumption}
 
Specifically, when Equation \eqref{eq:domain_attraction} holds, Condition \ref{ass:domain} is satisfied, and consequently, the copula associated with $H$ is $C_\infty$. Typically, the limit $C_\infty$ is an extreme value copula, that is, the copula $C_\infty$ is max-stable $C_\infty(\textbf{u}^{1/s})^s = C_\infty(\textbf{u})$, for all $s > 0$ and it can be expressed as follows for $\textbf{u} \in [0,1]^d$:
\begin{equation*}
C_\infty(\textbf{u}) = \exp\left\{-L\left(-\ln(u^{(1)}), \dots, -\ln(u^{(d)})\right)\right\},
\end{equation*}
where $L :[0,\infty]^d \rightarrow [0,\infty]$ {is the associated} stable tail dependence function  (see \cite{gudendorf2009extremevalue} for an overview of extreme value copulae).  {However, $C_\infty$ is in general different from the extreme value copula, denoted $C_\infty^{\textrm{iid}}$, obtained when the process $(\textbf{Z}_t, t \in \mathbb{Z})$ is serially independent (see, e.g., \cite[Section 4.1]{bucher2014extreme}).}

As $L$ is an homogeneous function of order $1$, i.e., $L(a\textbf{z}) = a L(\textbf{z})$ for all $a > 0$, we have, for all $\textbf{z} \in [0,\infty)^d$,
\begin{equation*}
	L(\textbf{z}) = (z^{(1)} + \dots + z^{(d)})A(\textbf{t}),
\end{equation*}
with $t^{(j)} = z^{(j)} / (z^{(1)} + \dots + z^{(d)})$ for $j \in \{2,\dots,d\}$, $t^{(1)} = 1-(t^{(2)}+\dots+t^{(d)})$, and $A$ is the restriction of $L$ into the $d$-dimensional unit simplex, viz.
\begin{equation*}
	\Delta_{d-1} = \{(v^{(1)}, \dots, v^{(d)}) \in [0,1]^d : v^{(1)} + \dots + v^{(d)} = 1\}.
\end{equation*}
The function $A$ is known as the Pickands dependence function and is often used to quantify the extremal dependence among the elements of $\textbf{X}$. Indeed, $A$ satisfies the constraints $1/d \leq \max(t^{(1)}, \dots, t^{(d)}) \leq A(\textbf{t}) \leq 1$ for all $\textbf{t} \in \Delta_{d-1}$, with lower and upper bounds corresponding to the complete dependence and independence among maxima. For the latter, it is commonly said that the stationary random process $(\textbf{Z}_t, t \in \mathbb{Z})$ exhibits asymptotic independence, i.e., the multivariate extreme value distribution $H$ in the max-domain of attraction is equal to the product of its marginal extreme value distributions.
	
\subsection{Proposed AI-block models} 
\label{subsec:ai_block_models}
	
In this paper, our main focus is to identify disjoint groups of variables that may simultaneously be large without affecting the other groups. We thus introduce a novel class of models called AI-block models for variable clustering. These models define population-level clusters as groups of variables that exhibit dependence within clusters but extremes are independent from variables in other clusters. Formally, these variables can be partitioned into an unknown number, denoted as $G$, of clusters represented by $O = \{O_1, \dots, O_G\}$. Within each cluster, the variables display dependence, while the clusters themselves are asymptotically independent. In this section, our primary focus is on the identifiability of the model, specifically addressing the existence of a unique maximal element according to a specific partial order on the partition. We provide an explicit construction of this maximal element, which represents the thinnest partition where the desired property holds. This maximal element serves as a target for statistical inference within our framework.

In a different framework, consider $\textbf{X}^{(O_1)}, \dots, \textbf{X}^{(O_G)}$ be arbitrary random subvectors with marginal copulae $C^{(O_1)},\dots,C^{(O_G)}$ respectively. Independence between random vectors holds if and only if the underlying copula of $\textbf{X} = (\textbf{X}^{(O_1)},\dots,\textbf{X}^{(O_g)})$ is the product of the marginal copulae. This statement also holds for marginal extreme value copulae $C_\infty^{(O_1)},\dots,C_\infty^{(O_G)}$ with the property that the copula of $\textbf{X}$ is again an extreme value copula.
\begin{proposition}
	\label{prop:Cevt}
	Let $\normalfont{\textbf{X}^{(O_1)}, \dots, \textbf{X}^{(O_G)}}$ be independent extreme value random vectors with extreme value copulae $C_\infty^{(O_1)}, \dots, C_\infty^{(O_G)}$. Then the function $C_\infty$ defined as
	\begin{align*}
    	\begin{array}{lrcl}
        C_\infty : & [0,1]^d & \longrightarrow & [0,1] \\
        & \normalfont{\textbf{u}} & \longmapsto & \Pi_{g=1}^G C_\infty^{(O_g)}(u^{(i_{g,1})}, \dots, u^{(i_{g,d_g})}), \end{array}
    \end{align*}
    is an extreme value copula associated to the random vector $\normalfont{\textbf{X} = (\textbf{X}^{(O_1)}, \dots, \textbf{X}^{(O_G)})}$.
\end{proposition}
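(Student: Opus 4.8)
The statement asks us to show two things about the product function $C_\infty$: first, that it is a genuine copula (associated with the stated random vector), and second, that it is an extreme value copula, i.e., max-stable. Let me sketch how I'd approach each.

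For the first part: the claim that $C_\infty$ as defined is a copula associated with $\mathbf{X} = (\mathbf{X}^{(O_1)},\dots,\mathbf{X}^{(O_G)})$ under independence of the blocks is essentially Sklar's theorem combined with the definition of independence given in the Notations paragraph. Since the blocks $\mathbf{X}^{(O_g)}$ are mutually independent, the joint distribution $H$ factorizes as $H(\mathbf{x}) = \prod_{g=1}^G H^{(O_g)}(\mathbf{x}^{(O_g)})$. Writing $F^{(j)}$ for the (continuous) margins — which are common to $H$ and to the relevant $H^{(O_g)}$ — and applying Sklar's theorem to each block gives $H^{(O_g)}(\mathbf{x}^{(O_g)}) = C_\infty^{(O_g)}\bigl(F^{(i_{g,1})}(x^{(i_{g,1})}),\dots,F^{(i_{g,d_g})}(x^{(i_{g,d_g})})\bigr)$. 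Substituting the probability integral transforms $u^{(j)} = F^{(j)}(x^{(j)})$ and using that the margins are continuous so the copula is unique, one reads off that the copula of $\mathbf{X}$ is exactly the product $\prod_{g=1}^G C_\infty^{(O_g)}(u^{(i_{g,1})},\dots,u^{(i_{g,d_g})})$. One should note in passing that a finite product of copulas acting on disjoint sets of coordinates is again a copula (uniform margins are preserved since plugging $u^{(j)}=1$ for $j$ outside block $g$ leaves $C_\infty^{(O_g)}$ evaluated with all-but-one arguments equal to $1$), so this is well defined.

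For the second part — max-stability — I would verify the defining identity $C_\infty(\mathbf{u}^{1/s})^s = C_\infty(\mathbf{u})$ directly from the factorized form. Since each $C_\infty^{(O_g)}$ is by hypothesis an extreme value copula, it satisfies $C_\infty^{(O_g)}\bigl((\mathbf{u}^{(O_g)})^{1/s}\bigr)^s = C_\infty^{(O_g)}(\mathbf{u}^{(O_g)})$ for every $s>0$. Then
\begin{align*}
C_\infty(\mathbf{u}^{1/s})^s &= \left(\prod_{g=1}^G C_\infty^{(O_g)}\bigl((\mathbf{u}^{(O_g)})^{1/s}\bigr)\right)^s = \prod_{g=1}^G C_\infty^{(O_g)}\bigl((\mathbf{u}^{(O_g)})^{1/s}\bigr)^s \\
&= \prod_{g=1}^G C_\infty^{(O_g)}\bigl(\mathbf{u}^{(O_g)}\bigr) = C_\infty(\mathbf{u}),
\end{align*}
where the coordinatewise power $\mathbf{u}^{1/s}$ commutes with the partition into blocks because the power acts entrywise. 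Equivalently, one could pass through the stable tail dependence function: the product of copulas corresponds to the sum $L(\mathbf{z}) = \sum_{g=1}^G L^{(O_g)}(\mathbf{z}^{(O_g)})$ of the block-wise stable tail dependence functions, and a sum of order-$1$ homogeneous functions is order-$1$ homogeneous, which is equivalent to max-stability of the associated copula.

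Honestly, I expect no serious obstacle here: the result is a bookkeeping exercise in Sklar's theorem plus the observation that both "being a copula" and "being max-stable" are preserved under taking products over disjoint coordinate blocks. The only point requiring a little care is making sure the margins are handled correctly — that the $j$th margin of $H$ coincides with the corresponding margin of the block distribution $H^{(O_{s(j)})}$, so that a single probability integral transform $u^{(j)} = F^{(j)}(x^{(j)})$ is consistent across the factorization — and invoking continuity of the margins (inherited from $\mathbf{X}$ being a max-stable / extreme value vector with generalized extreme value margins) to guarantee uniqueness of the copula. With that in place the two displayed computations above complete the argument.
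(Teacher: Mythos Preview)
Your proposal is correct and follows essentially the same approach as the paper: verify max-stability of the product via the max-stability of each factor $C_\infty^{(O_g)}$, and identify $C_\infty$ as the copula of $\mathbf{X}$ through the factorization $H = \prod_g H^{(O_g)}$ combined with Sklar's theorem. The paper orders the steps slightly differently (checking the uniform-margin property directly before showing the association with $\mathbf{X}$), and your extra remark about the stable tail dependence function is a valid alternative but not used in the paper's proof.
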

As a result, a random vector $\textbf{X}$ that exhibits (asymptotic) independence between extreme-valued subvectors therefore inherits this extreme-valued property. Using the definitions and notations so far introduced in this work, we now present the definition of our model.

\begin{definition}[Asymptotic Independent-block model]
	\label{def:AI_block_models}
	Let $(\normalfont{\textbf{Z}_t}, t \in \mathbb{Z})$ be a $d$-variate stationary random process and $\textbf{X}$ a random vector with cumulative distribution function  $H$, a multivariate extreme value distribution with copula $C_\infty$. The random process $(\normalfont{\textbf{Z}_t}, t \in \mathbb{Z})$ is said to follow an AI-block model if $\mathcal{L}((\normalfont{\textbf{Z}_t}, t \in \mathbb{Z})) \in D(H)$ and there exists a partition $O = \{O_1,\dots,O_G\}$ of $\{1,\dots,d\}$ with $\normalfont{C_\infty(\textbf{u}) = \Pi_{g=1}^G C_\infty^{(O_g)}(\textbf{u}^{(O_g)})}$.
\end{definition}
 
Notice that, when $G=1$, the definition of AI-block models thus reduces to $\mathcal{L}((\textbf{Z}_t, t \in \mathbb{Z})) \in D(H)$.

Following \cite{10.1214/18-AOS1794}, we introduce the following notation in our framework. We say that $(\textbf{Z}_t, t \in \mathbb{Z})$ follows an AI-block model with a partition $O$, denoted $\mathcal{L}((\textbf{Z}_t, t \in \mathbb{Z})) \sim O$. We define the set $\mathcal{O}((\textbf{Z}t, t \in \mathbb{Z})) = \{O: O \textrm{ is a partition of } \{1,\dots,d\} \textrm{ and } \mathcal{L}((\textbf{Z}t, t \in \mathbb{Z})) \sim O\}$, which is nonempty and finite, and therefore has maximal elements. We introduce a partial order on partitions as follows: let $O = \{O_g\}_g$ and $\{S_{g'}\}_{g'}$ be two partitions of $\{1,\dots,d\}$. We say that $S$ is a sub-partition of $O$ if, for each $g'$, there exists $g$ such that $S_{g'} \subseteq O_g$. We define the partial order $\leq$ between two partitions $O$ and $S$ of $\{1,\dots,d\}$ as follows:
\begin{equation}
	\label{eq:partial_order_partition}
	O \leq S, \textrm{ if $S$ is a sub-partition of $O$.}
\end{equation}
For any partition $O = \{O_g\}_{1 \leq g \leq G}$, we write $a \overset{O}{\sim} b$ where $a,b \in \{1,\dots,d\}$ if there exists $g \in \{1,\dots,G\}$ such that $a,b \in O_g$.
	
\begin{definition}
    \label{def:ocaps}
	For any two partitions $O, S$ of $\{1,\dots,d\}$, we define $O \cap S$ as the partition induced by the equivalence relation $a \overset{O \cap S}{\sim} b$ if and only if $a \overset{O}{\sim} b$ and $a \overset{S}{\sim} b$.
\end{definition}
Checking that $a \overset{O \cap S}{\sim} b$ is an equivalence relation is straightforward. With this definition, we have the following interesting properties that lead to the desired result, the identifiability of AI-block models.
\begin{theorem}
	\label{thm:unicity}
	Let $(\normalfont{\textbf{Z}_t}, t \in \mathbb{Z})$ be a stationary random process. The following properties hold:
	\begin{enumerate}[label=\textcolor{frenchblue}{\bf(\roman*)}]
		\item Consider $O \leq S$. Then $\mathcal{L}((\normalfont{\textbf{Z}}_t, t \in \mathbb{Z})) \sim S$ implies $\mathcal{L}((\normalfont{\textbf{Z}}_t, t \in \mathbb{Z})) \sim O$, \label{thm_(i)}
		\item $O \leq O\cap S$ and $S \leq O \cap S$, \label{thm_(ii)}
		\item $\mathcal{L}((\normalfont{\textbf{Z}}_t, t \in \mathbb{Z}))\sim O$ and $\mathcal{L}((\normalfont{\textbf{Z}}_t, t \in \mathbb{Z})) \sim S$ is equivalent to $\mathcal{L}((\normalfont{\textbf{Z}}_t, t \in \mathbb{Z})) \sim O \cap S$, \label{thm_(iii)}
		\item The set $\mathcal{O}((\textbf{Z}_t, t \in \mathbb{Z}))$ has a unique maximum $\bar{O}$, with respect to the partition partial order $\leq$ in \eqref{eq:partial_order_partition}. \label{thm_(iv)}
	\end{enumerate}
\end{theorem}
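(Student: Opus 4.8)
The plan is to handle the four items in the order \ref{thm_(ii)}, \ref{thm_(i)}, \ref{thm_(iii)}, \ref{thm_(iv)}: part \ref{thm_(ii)} is immediate from Definition~\ref{def:ocaps}, part \ref{thm_(i)} uses only the behaviour of product measures under marginalisation together with Proposition~\ref{prop:Cevt}, and part \ref{thm_(iv)} follows formally once \ref{thm_(ii)} and \ref{thm_(iii)} are available. For \ref{thm_(ii)}, Definition~\ref{def:ocaps} says that the equivalence classes of $O\cap S$ are exactly the nonempty intersections $O_g\cap S_{g'}$; each such set lies inside the block $O_g$ of $O$ and inside the block $S_{g'}$ of $S$, so $O\cap S$ is a sub-partition of both $O$ and $S$, which by \eqref{eq:partial_order_partition} is precisely $O\le O\cap S$ and $S\le O\cap S$.

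For \ref{thm_(i)}, assume $O\le S$ and $\textbf{Z}\sim S$, so $F\in D(H)$, $H=\Pi_{g'}H^{(S_{g'})}$, and each $\textbf{X}^{(S_{g'})}$ is an extreme value random vector. Since $S$ refines $O$, each block $O_g$ is the disjoint union of the blocks $S_{g'}$ contained in it. Marginalising $H=\Pi_{g'}H^{(S_{g'})}$ onto the coordinates in $O_g$ gives $H^{(O_g)}=\Pi_{g':\,S_{g'}\subseteq O_g}H^{(S_{g'})}$, so $\textbf{X}^{(O_g)}$ is a stack of independent extreme value random vectors with generalized extreme value margins; by Proposition~\ref{prop:Cevt} applied within $O_g$, it is itself an extreme value random vector. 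Regrouping the product over $g'$ according to the blocks of $O$ yields $H=\Pi_g H^{(O_g)}$, and as $F\in D(H)$ is unchanged we get $\textbf{Z}\sim O$.

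For \ref{thm_(iii)}, one direction is immediate: if $\textbf{Z}\sim O\cap S$, then by \ref{thm_(ii)} and \ref{thm_(i)} we obtain $\textbf{Z}\sim O$ and $\textbf{Z}\sim S$. For the converse, suppose $\textbf{Z}\sim O$ and $\textbf{Z}\sim S$. Marginalising $H=\Pi_g H^{(O_g)}$ onto the coordinates of $S_{g'}$ gives $H^{(S_{g'})}=\Pi_g H^{(O_g\cap S_{g'})}$ (empty intersections contributing the trivial factor $1$); plugging this into $H=\Pi_{g'}H^{(S_{g'})}$ and reordering the double product gives $H=\Pi_g\left(\Pi_{g'}H^{(O_g\cap S_{g'})}\right)$. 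Marginalising this identity back onto the coordinates of $O_g$ and comparing with $H=\Pi_g H^{(O_g)}$ forces $H^{(O_g)}=\Pi_{g'}H^{(O_g\cap S_{g'})}$ for each $g$; together with independence across the blocks of $O$ this is exactly the factorisation of $H$ over the blocks of $O\cap S$. Since a subvector of an extreme value random vector is again one, each block $O_g\cap S_{g'}$ of $O\cap S$ indexes an extreme value random vector (it is a subvector of $\textbf{X}^{(S_{g'})}$), and as $F\in D(H)$ we conclude $\textbf{Z}\sim O\cap S$.

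For \ref{thm_(iv)}, the set $\mathcal{L}(\textbf{Z})$ is finite and nonempty, so $\bar O(\textbf{Z}):=\bigcap_{O\in\mathcal{L}(\textbf{Z})}O$ (the iterated meet of Definition~\ref{def:ocaps}) is well defined; repeated application of \ref{thm_(iii)} shows $\bar O(\textbf{Z})\in\mathcal{L}(\textbf{Z})$, while \ref{thm_(ii)} gives $O\le\bar O(\textbf{Z})$ for every $O\in\mathcal{L}(\textbf{Z})$, so $\bar O(\textbf{Z})$ is the maximum and hence the unique maximal element. I expect the converse direction of \ref{thm_(iii)} to be the only real obstacle: one must track carefully how marginalisation interacts with the two distinct product representations of $H$ to recover independence over the common refinement, whereas \ref{thm_(i)}, \ref{thm_(ii)} and \ref{thm_(iv)} are bookkeeping once that is in place.
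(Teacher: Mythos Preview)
Your proof is correct and follows essentially the same strategy as the paper: set-theoretic bookkeeping for \ref{thm_(ii)}, regrouping of the product factorisation of $H$ for \ref{thm_(i)} and the nontrivial direction of \ref{thm_(iii)}, and then iterating \ref{thm_(ii)}--\ref{thm_(iii)} over the finite set $\mathcal{L}(\textbf{Z})$ for \ref{thm_(iv)}. Your treatment is in fact more explicit than the paper's in two places---you invoke Proposition~\ref{prop:Cevt} to verify that each coarser block $\textbf{X}^{(O_g)}$ is still extreme value in \ref{thm_(i)}, and you spell out the double-marginalisation argument for the hard direction of \ref{thm_(iii)} (which the paper dispatches in one line as ``from the definition'')---but the underlying ideas are identical.
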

	
The proof demonstrates that, for any partition such that $\mathcal{L}((\textbf{Z}_t, t \in \mathbb{Z}))$ follows an AI-block model, there exists a maximal partition, denoted by $\bar{O}$, and its structure is intrinsic to the definition of the extreme random vector $\textbf{X}$. This partition, which represents the thinnest partition where $\mathcal{L}((\textbf{Z}_t, t \in \mathbb{Z}))$ is asymptotically independent per block, matches our expectations for a reasonable clustering target in these models. Also, a careful reading of the proof shows that this statement can also hold for the setting of mutually independent random vectors.

\subsection{Extremal dependence structure for AI-block models}
\label{subsec:struc_learn}

In extreme value theory, independence between the components $X^{(1)}, \dots, X^{(d)}$ of an random vector with extreme value distribution $H$ can be characterized in a useful way: according to \cite[Theorem 2.2]{takahashi1994asymptotic}, total independence of $\textbf{X}$ is equivalent to the existence of a vector $\textbf{p}=(p^{(1)}, \ldots, p^{(d)}) \in \mathbb{R}^d$ such that $ H(\textbf{p}) = H^{(1)}(p^{(1)}) \dots H^{(d)}(p^{(d)})$. This characterization were extended for the independence of a multivariate extreme value distribution to its multivariate marginals from \cite[Proposition 2.1]{FERREIRA2011586}, i.e., it holds that $H(\textbf{x}) = \Pi_{g=1}^G H^{(O_g)}(\textbf{x}^{(O_g)})$ for every $\textbf{x} \in \mathbb{R}^d$ if and only if there exists $\textbf{p} \in \mathbb{R}^d$ such that $0 < H^{(O_g)}(\textbf{p}^{(O_g)}) < 1$ for every $g \in \{1,\dots,G\}$ and $H(\textbf{p}) = \Pi_{g=1}^G H^{(O_g)}(\textbf{p}^{(O_g)})$. An alternative proof of this result, which involves the spectral measure, along with additional characterizations of extremal dependence structures in AI-block models, is presented in Appendix \ref{sec:aux_results_section_variable_clust}. One direct application of this result in AI-block models is that $\textbf{X}^{(O_1)}, \dots, \textbf{X}^{(O_G)}$ are independent if and only if 
	$
		L\left(1, \dots, 1\right) = \sum_{g=1}^G L^{(O_g)}\left( \textbf{1}^{(O_g)} \right).
    $ 
 
    \begin{definition}[Sum of Extremal COefficients (SECO)]
        The extremal coefficient of a random vector $\normalfont{\textbf{X}}$ with copula $C_\infty$ is defined as (see \cite{Smith1990}):
        \begin{equation}
        \label{eq:ext_coeff_chap2}
            \theta: =\theta^{(\{1,\dots,d\})} =L(1,\dots,1),
        \end{equation}
        where $L$ is the stable tail dependence function. For a partition $O = \{O_1, \dots, O_G\}$ of $\{1,\dots,d\}$, we define $\theta^{(O_g)} = \normalfont{L^{(O_g)}(\textbf{1}^{(O_g)})}$, as the extremal coefficient of the subvectors $\normalfont{\textbf{X}^{(O_g)}}$ where $d_g = |O_g|$ is the size of the set $O_g$ and $L^{(O_g)}$ is the stable tail dependence function associated to $C_\infty^{(O_g)}$. Using these coefficients, we define the following quantity $\SECO$ as
	    \begin{equation}		
            \label{eq:SECO}
		      \SECO(O) = \sum_{g=1}^G \theta^{(O_g)} - \theta. 
	    \end{equation}
    \end{definition}
 
 The extremal coefficient satisfies $1 \leq \theta \leq d$ where the lower and upper bounds correspond to the complete dependence and independence among maxima, respectively. The Sum of Extremal Coefficient (SECO) serves as a quantitative measure that assesses how much the sum of extremal coefficients for subvectors $\textbf{X}^{(O_g)}$ deviates from the extremal coefficient of the full vector $\textbf{X}$. When the SECO equals 0, it signifies that the subvectors $\textbf{X}^{(O_1)}, \dots, \textbf{X}^{(O_G)}$ form an independent partition (see \cite[Proposition 2.1]{FERREIRA2011586}). In other words, these subvectors exhibit asymptotic independence, irrespective of any underlying distributional assumptions. Therefore, the SECO, as defined in Equation \eqref{eq:SECO}, is a valuable tool for capturing the asymptotic independent block structure of the random vector $\textbf{X},$ and it offers the dual advantages of computational feasibility and being free from parametric assumptions, as discussed in Section \ref{subsec:data_driv}.
	
	Additionally, we establish a condition based on the extremal dependence of each cluster, which allows us to introduce a straightforward yet robust algorithm. This algorithm facilitates the comparison of pairwise extreme dependence between vector components, enabling us to draw informed conclusions about the dependence structures using only pairwise comparisons. It provides a practical means of assessing and quantifying the relationships among the various components of the vector, aiding in the analysis of complex high-dimensional data.
\begin{Assumption}{$\mathcal{B}$}\label{ass:A}
    For every $g \in \{1,\dots, G\}$, the extreme value random subvector $\normalfont{\textbf{X}}^{(\bar{O}_g)}$ of $\normalfont{\textbf{X}}$ where the latter is given in Definition \ref{def:AI_block_models} and $\bar{O}_g$ is the maximal element of $\mathcal{O}((\normalfont{\textbf{Z}}_t, t\in \mathbb{Z}))$ in Theorem \ref{thm:unicity}, exhibits dependence between all of its components.
\end{Assumption}
One sufficient condition to satisfy Condition \ref{ass:A} is to suppose that the exponent measure of the random subvector $\textbf{X}^{(\bar{O}_g)}$ has nonnegative Lebesgue densities on the nonnegative orthant $[0,\infty)^{d_g} \setminus \{\textbf{0}^{(\bar{O}_g)}\}$, for every $g \in \{1, \dots, G\}$ (see, e.g., \cite{engelke2020graphical} and the associated discussions). This condition implies that components within a cluster are simultaneously large. Various classes of tractable extreme value distributions satisfy Condition \ref{ass:A}. These popular models, commonly used for statistical inference,  include the asymmetric logistic model (\cite{tawn1990}), the asymmetric Dirichlet model (\cite{10.2307/2345748}), the pairwise Beta model (\cite{COOLEY20102103}) or the Hüsler Reiss model (\cite{husler1989maxima}).

\section{Consistent estimation of minimaly separated clusters}
    \label{sec:estimation}
    \subsection{Multivariate tail coefficient}
    \label{subsec:mv_tail_coef}

	Throughout this section, assume that we observe one  excerpt $\textbf{Z}_1 \dots, \textbf{Z}_n$ from a $d$-dimensional stationary random process $(\textbf{Z}_t, t \in \mathbb{Z})$  that satisfies Definition \ref{def:AI_block_models}. The sample of size $n$ of $(\textbf{Z}_t, t \in \mathbb{Z})$ is divided into $k$ blocks of length $m$, so that $k = \lfloor n/m \rfloor$, the integer part of $n/m$ and there may be a remaining block of length $n-km$. For the $i$-th block, the maximum value in the $j$-th component is denoted by
	\begin{equation*}
		M_{m,i}^{(j)} = \max \left\{ Z_t^{(j)} \, : \, t \in (im-m,im] \cap \mathbb{Z} \right\}.
	\end{equation*}
	Let us denote by $\textbf{M}_{m,i} = (M_{m,i}^{(1)}, \dots, M_{m,i}^{(d)})$ the vector of the componentwise maxima in the $i$-th block. For a fixed block length $m$, the sequence of block maxima $(\textbf{M}_{m,i})_i$ forms a stationary process that exhibits the same regularity of the process $(\textbf{Z}_t, t \in \mathbb{Z})$. The distribution functions of block maxima are denoted by
	\begin{equation*}
		F_m(\textbf{x}) = \mathbb{P}\left\{ \textbf{M}_{m,1} \leq \textbf{x} \right\}, \quad F_{m}^{(j)} (X^{(j)}) = \mathbb{P}\left\{ M_{m,1}^{(j)} \leq X^{(j)} \right\},
	\end{equation*}
	with $\textbf{x} \in \mathbb{R}^d$ and $j \in \{1,\dots,d\}$. Denote by $U_{m,1}^{(j)} = F_{m}^{(j)}(M_{m,1}^{(j)})$ the unobservable uniform margin of $M_{m,1}^{(j)}$ with $j \in \{1,\dots,d\}$. Let $C_m$ be the unique (as the margins of $\textbf{M}_{m,1}$ are continuous) copula of $F_m$.  {Then,  from  Condition \ref{ass:domain}, $C_m$ is in the domain-of-attraction of a copula $C_\infty$. By \cite[Theorem 4.2]{hsing1989extreme}, $C_\infty$ is an extreme value copula if the time series $(\textbf{Z}_t, t \in \mathbb{Z})$ is $\beta$-mixing.}
	
	One way to measure tail dependence for a $d$-dimensional extreme value random vector is through the use of the extremal coefficient, as defined in Equation \eqref{eq:ext_coeff_chap2}. According to \cite{schlather2002inequalities}, the coefficient $\theta$ can be interpreted as the number of independent variables that are involved in the given random vector. Let $x \in \mathbb{R}$ and $\theta_m(x)$ be the extremal coefficient for the vector of maxima $\textbf{M}_{m,1}$, which is defined by the following relation:
	
	\begin{equation*}
	    \mathbb{P}\left\{\bigvee_{j=1}^d U_{m,1}^{(j)} \leq x\right\} = \mathbb{P}\{U_{m,1}^{(1)} \leq x \}^{\theta_m(x)}.
	\end{equation*}
    Under Condition \ref{ass:domain}, the coefficient $\theta_m(x)$ of the componentwise maxima $\textbf{M}_{m,1}$ converges to the extremal coefficient $\theta$ of the random vector $\textbf{X}$, that is:
	\begin{equation*}
	    \theta_m(x) \underset{m \rightarrow \infty}{\longrightarrow} \theta, \quad \forall x \in \mathbb{R}.
	\end{equation*}
    It is worth noting that $\theta$ is a constant since $\textbf{X}$ is a multivariate extreme value distribution. To generalize the bivariate madogram for the random vectors $\textbf{M}_{m,1}$ we follow the same approach as in \cite{boulin2021non} and define:
	\begin{equation}
		\label{eq:theoretical_mado}
	    \nu_m = \mathbb{E}\left[ \bigvee_{j=1}^d U_{m,1}^{(j)} - \frac{1}{d} \sum_{j=1}^d U_{m,1}^{(j)} \right], \quad \nu = \mathbb{E}\left[ \bigvee_{j=1}^d H^{(j)}(X^{(j)}) - \frac{1}{d} \sum_{j=1}^d H^{(j)}(X^{(j)}) \right].
	\end{equation}
	Condition \ref{ass:domain} implies that the distribution of $\textbf{M}_{m,1}$ converges to a multivariate extreme distribution with copula $C_{\infty}$. A common approach for estimating the extremal coefficient in this scenario consists of supposing that the sample follows exactly the extreme value distribution and to consider $\theta_m(x) := \theta_m$ where the latter quantity is defined as the \textit{pre-asymptotic} extremal coefficient (see, for example, \cite{engelke2022structure} for a similar terminology) which is constant for every $x$. Thus, we have
	\begin{equation*}
	    \theta_m = \frac{1/2 + \nu_m}{1/2 - \nu_m}, \quad 1 \leq \theta_m \leq d.
	\end{equation*}
    One issue with the \emph{pre-asymptotic} extremal coefficient is that it is misspecified, as extreme value distributions only arise in the limit as the block size $m$ tends to infinity, while in practice we must use a finite sample size. We study this  misspecification  error in Section \ref{subsec:grow_dim}. A plug-in estimation process can be obtained using:
	\begin{equation}
		\label{eq:est_theta}
	    \hat{\theta}_{n,m} = \frac{1/2 + \hat{\nu}_{n,m}}{1/2 - \hat{\nu}_{n,m}},
	\end{equation}
	where $\hat{\nu}_{n,m}$ is an estimate of $\nu_m$ obtained using:
	\begin{equation}
		\label{eq:est_mado}
	    \hat{\nu}_{n,m} = \frac{1}{k} \sum_{i=1}^k \left[ \bigvee_{j=1}^d \hat{U}_{n,m,i}^{(j)} - \frac{1}{d}\sum_{j=1}^d \hat{U}_{n,m,i}^{(j)} \right],
	\end{equation}
	and $(\hat{U}_{n,m,1}^{(j)}, \dots, \hat{U}_{n,m,k}^{(j)})$ are the empirical counterparts of $(U_{m,1}^{(j)},\dots,U_{m,k}^{(j)})$ or, equivalently, scaled ranks of the sample. A data-driven method for selection the block size $m$ is still lacking in the literature. To the best of our knowledge, only \cite{10.1214/20-AOS1957} propose a method in the multivariate time series setting for selecting $m$ through bias correction using sliding-block maxima, which is out of the scope of the paper. In the following, we provide non-asymptotic bounds for the error $|\hat{\nu}_{n,m} - \nu_m|$.
	
	\begin{proposition}
	\label{prop:concentration_inequality}
		Let $(\normalfont{\textbf{Z}_t}, t \in \mathbb{Z})$ be a stationary process with algebraic $\varphi$-mixing distribution, $\varphi(n) \leq \lambda n^{-\zeta}$ where $\lambda > 0$, and $\zeta > 1$. Then the following concentration bound holds
		\begin{equation*}
			\mathbb{P}\left\{ |\hat{\nu}_{n,m} - \nu_m | \geq C_1 k^{-1/2} + C_2 k^{-1} + t \right\} \leq (d + 2\sqrt{e}) \exp \left\{ - \frac{t^2 k}{C_3} \right\},
		\end{equation*}
		where $k$ is the number of block maxima and $C_1$, $C_2$ and $C_3$ are constants depending only on $\zeta$ and $\lambda$.
	\end{proposition}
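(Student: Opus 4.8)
\emph{Proof sketch.}
The plan is to split the error through the triangle inequality into an \emph{oracle} term that uses the true (unobservable) uniform margins $U_{m,i}^{(j)}=F_m^{(j)}(M_{m,i}^{(j)})$ and a \emph{rank} term that accounts for replacing those margins by the scaled ranks. Set
\[
\tilde{\nu}_{n,m}=\frac1k\sum_{i=1}^k\Bigl[\bigvee_{j=1}^d U_{m,i}^{(j)}-\frac1d\sum_{j=1}^d U_{m,i}^{(j)}\Bigr],
\]
so that $|\hat{\nu}_{n,m}-\nu_m|\le |\hat{\nu}_{n,m}-\tilde{\nu}_{n,m}|+|\tilde{\nu}_{n,m}-\nu_m|$. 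I would bound each summand by its own sub-Gaussian deviation inequality and then glue the two bounds with a union bound; this is where the prefactor $d+2\sqrt e$ is produced, the $2\sqrt e$ coming from the oracle term and the $d$ from a union bound over the $d$ marginal Kolmogorov--Smirnov statistics.

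For the oracle term, note that $W_i:=\bigvee_{j}U_{m,i}^{(j)}-d^{-1}\sum_j U_{m,i}^{(j)}$ takes values in $[0,(d-1)/d]\subset[0,1]$, and that for fixed $m$ the sequence $(W_i)_{i\ge1}$ is stationary and is a measurable function of disjoint length-$m$ blocks of $(\textbf{Z}_t,t\in\mathbb Z)$; hence it inherits an algebraic $\varphi$-mixing rate $\varphi_W(n)\le\lambda(m(n-1)+1)^{-\zeta}$, and since $\zeta>1$ one has $\sum_n\varphi_W(n)\le\lambda\bigl(1+\zeta/(\zeta-1)\bigr)$ uniformly in $m$, which is exactly why the final constants depend only on $\lambda$ and $\zeta$. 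Because $\tilde{\nu}_{n,m}-\nu_m=k^{-1}\sum_{i=1}^k(W_i-\mathbb E W_i)$ is a normalized sum of bounded, centered, $\varphi$-mixing variables, a Bernstein-type inequality for weakly dependent bounded sequences gives $\mathbb P\{|\tilde{\nu}_{n,m}-\nu_m|\ge t\}\le 2\sqrt e\,\exp(-t^2k/C_3')$ with $C_3'$ depending only on $\lambda,\zeta$.

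For the rank term, write $\hat{U}_{n,m,i}^{(j)}=\hat{F}_{n,m}^{(j)}(M_{m,i}^{(j)})$ with $\hat{F}_{n,m}^{(j)}$ the empirical distribution function of $(M_{m,l}^{(j)})_{l=1}^k$. Since $u\mapsto\bigvee_j u^{(j)}$ and $u\mapsto d^{-1}\sum_j u^{(j)}$ are both $1$-Lipschitz for the sup-norm, the madogram integrand is $2$-Lipschitz, so $|\hat{\nu}_{n,m}-\tilde{\nu}_{n,m}|\le \tfrac2k\sum_{i=1}^k\max_j|\hat{U}_{n,m,i}^{(j)}-U_{m,i}^{(j)}|\le 2\max_{1\le j\le d}D_{n,m}^{(j)}$, up to an $O(1/k)$ rank-normalization correction, where $D_{n,m}^{(j)}:=\sup_x|\hat{F}_{n,m}^{(j)}(x)-F_m^{(j)}(x)|$. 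It then suffices to (i) use a maximal inequality for the empirical process of a $\varphi$-mixing sequence to get $\mathbb E[D_{n,m}^{(j)}]\le C_1'k^{-1/2}+C_2'k^{-1}$ --- which yields the deterministic shift $C_1k^{-1/2}+C_2k^{-1}$ in the statement --- and (ii) use a Talagrand/Bousquet--Samson-type concentration inequality for suprema of empirical processes of $\varphi$-mixing sequences to get $\mathbb P\{D_{n,m}^{(j)}\ge\mathbb E[D_{n,m}^{(j)}]+t\}\le\exp(-t^2k/C_3'')$; a union bound over $j\in\{1,\dots,d\}$ gives $\mathbb P\{2\max_jD_{n,m}^{(j)}\ge C_1k^{-1/2}+C_2k^{-1}+t\}\le d\exp(-t^2k/C_3'')$. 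Setting $C_3=\max(C_3',C_3'')$ (after rescaling $t$) and combining the two displays by a union bound gives the claimed bound with prefactor $d+2\sqrt e$.

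The main obstacle is step (ii): obtaining a genuinely sub-Gaussian --- i.e.\ $\log$-free --- concentration for the \emph{uniform} deviation $D_{n,m}^{(j)}$ under only $\varphi$-mixing. A crude discretization of $x$ at $O(k)$ quantiles followed by a union bound would cost an extra $\log k$ factor in the exponent, incompatible with the stated form; avoiding this requires a Talagrand-type inequality valid for $\varphi$-mixing sequences, whose applicability hinges precisely on the boundedness of the indicator summands and on the summability $\sum_n\varphi(n)<\infty$ guaranteed by $\zeta>1$. The remaining ingredients --- the triangle decomposition, the Lipschitz bounds on the maximum and the average, and the Bernstein bound for the oracle term --- are routine. \qed
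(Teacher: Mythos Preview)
Your decomposition and overall strategy match the paper's proof: same oracle/rank split, same Lipschitz bound $|\hat\nu_{n,m}-\tilde\nu_{n,m}|\le 2\max_j D_{n,m}^{(j)}$, same plan of bounding $\mathbb E[D_{n,m}^{(j)}]$ and then concentrating around the mean, same union bound over $j$ producing the factor $d$, and the $2\sqrt e$ from the oracle term. (The paper splits the oracle part further into the max piece and the mean piece and applies Hoeffding for $\varphi$-mixing sequences to each, getting $\sqrt e+\sqrt e$; your single-term treatment of $W_i$ is equivalent.)

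The one place where your sketch diverges is precisely the step you flag as the main obstacle. You reach for a Talagrand/Bousquet--Samson-type inequality for suprema under $\varphi$-mixing, and worry that a naive discretization would cost a $\log k$. The paper avoids this entirely by observing that
\[
\Phi(x_1,\dots,x_k)=\sup_{x\in\mathbb R}\Bigl|\tfrac1k\sum_{i=1}^k\mathds 1_{\{x_i\le x\}}-F_m^{(j)}(x)\Bigr|
\]
is $k^{-1}$-Lipschitz with respect to the Hamming distance, so McDiarmid's bounded-differences inequality for $\varphi$-mixing sequences (Mohri--Rostamizadeh) applies directly and yields the clean sub-Gaussian tail $\exp(-t^2k/\|\Delta_k\|_\infty^2)$ with $\|\Delta_k\|_\infty\le 1+4\sum_i\varphi(i)$, no logarithmic loss. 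The expectation bound $\mathbb E[D_{n,m}^{(j)}]\le C_1k^{-1/2}+k^{-1}$ is obtained by a dyadic chaining of the quantiles (Rio's Lemma~7.1), and the $k^{-1}$ there is the resolution $2^{-N}$ of the chaining, not a rank-normalization artifact as you suggest. So your ``obstacle'' dissolves once you replace Talagrand by McDiarmid; otherwise your proof is essentially the paper's.
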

    The proof of Proposition \ref{prop:concentration_inequality}, along with all proofs of the mathematical results derived in Section \ref{sec:estimation} may be found in Appendix \ref{subsec:estimation_proofs} in the supplementary material. The non-asymptotic analysis in Proposition \ref{prop:concentration_inequality} is stringent and requires the use of $\varphi$-mixing in order to apply Hoeffding and McDiarmid inequalities in a setting where observations are not serially independent (see \cite[Section 2]{boucheron2013concentration}). However, tail bounds can also be established under $\beta$-mixing coefficients. One can also use Bernstein inequalities for $\alpha$-mixing sequences with a more stringent condition, namely exponentially decaying $\alpha$-mixing, using the main theorem in \cite{merlevede2009bernstein}.
	
	\subsection{Inference in AI-block models}
	\label{subsec:inference}
	
	In this section, we present an adapted version of the algorithm developed in \cite{10.1214/18-AOS1794} for clustering variables based on a metric on their covariances, named as CORD. Our adaptation involves the use of the extremal correlation as a measure of dependence between the extremes of two variables.  
	
	The $\SECO$ in Equation \eqref{eq:SECO} can be written in the bivariate setting as
	\begin{equation}
		\label{eq:ext_corr}
		\chi(a,b) := \SECO(\{a,b\}) = 2 - \theta(a,b),
	\end{equation}
	where for notational convenience, $\theta(a,b) := \theta^{(\{a,b\})}$ is the bivariate extremal coefficient between $X^{(a)}$ and $X^{(b)}$ as defined in Equation \eqref{eq:ext_coeff_chap2}. In fact, the bivariate $\SECO$ is exactly equal to the extremal correlation $\chi$ defined in \cite{coles1999dependence}. This metric has a range between $0$ and $1$, with the boundary cases representing asymptotic independence and  comonotonic extremal dependence, respectively. In an AI-block model, the statement
	\begin{equation*}
		\textbf{X}^{(O_g)} \indep \textbf{X}^{(O_h)}, \quad g \neq h,
	\end{equation*}
	is equivalent to
	\begin{equation}
		\label{eq:indep_ext_corr}
		\chi(a,b) = \chi(b,a) = 0, \quad \forall a \in O_g, \forall\, b \in O_h, \quad g \neq h.
	\end{equation}
	Thus using Condition \ref{ass:A} and Equation \eqref{eq:indep_ext_corr}, where the first condition can be equivalently stated using extremal correlation as:
	 \begin{equation*}
 {a \overset{\bar{O}}{\sim} b \implies \chi(a,s) > 0, \; \chi(b,s) > 0, \textrm{ where } s \in \{1,\dots,d\} \textrm{ such that } a \overset{\bar{O}}{\sim} s \textrm{ and } b \overset{\bar{O}}{\sim} s},
    \end{equation*}
    the extremal correlation is a sufficient statistic to recover clusters in an AI-block model. Indeed, Equation \eqref{eq:indep_ext_corr} reveals:  			
    \begin{equation*} 
    	a \overset{\bar{O}}{\not \sim} b \implies \chi(a,b) = 0. 
   	\end{equation*}
 Consequently, in an AI-block model, two variables $X^{(a)}$ and $X^{(b)}$ are considered part of the same cluster under Condition \ref{ass:A} if and only if $\chi(a,b) > 0$.
    For the estimation procedure, using tools introduced in the previous section, we give a sample version of the extremal correlation associated to $M_{m,1}^{(a)}$ and $M_{m,1}^{(b)}$ by
	\begin{equation*}
		\hat{\chi}_{n,m}(a,b) = 2 - \hat{\theta}_{n,m}(a,b), \quad  a,b \in \{1,\dots, d\},
	\end{equation*}
	where $\hat{\theta}_{n,m}(a,b)$ is the sampling version defined in \eqref{eq:est_theta} of $\theta(a,b)$. With some technical arguments, a concentration result estimate follows directly from Proposition \ref{prop:concentration_inequality}.  
 
    We can represent the matrix of all extremal correlations as $\mathcal{X} = [\chi(a,b)]_{a = 1,\dots,d, b=1,\dots,d}$. Additionally, we introduce its empirical counterpart, denoted as $\hat{\mathcal{X}}$. This version, $\hat{\mathcal{X}}$ incorporates elements $\hat{\chi}_{n, m}(a, b)$ for pairs $(a, b) \in \{1, \dots, d\}^2$. We present an algorithm, named ECO (Extremal COrrelation), which estimates the partition $\bar{O}$ using a dissimilarity metric based on the extremal correlation. This algorithm, outlined in Algorithm \ref{alg:rec_pratic}, does not require the specification of the number of groups $G$, as it is automatically estimated by the procedure. The algorithm complexity for computing the $k$ vectors $\hat{\textbf{U}}_{n,m,i}= (\hat{U}_{n,m,i}^{(1)}, \dots, \hat{U}_{n,m,i}^{(d)})$ for $i \in \{1,\dots,k\}$ is of order $O(d k \ln(k))$. Given the empirical ranks, computing $\hat{\mathcal{X}}$ and performing the algorithm require $O(d^2\vee d k \ln(k))$ and $O(d^3)$ computations, respectively. So the overall complexity of the estimation procedure is $O(d^2 (d \vee  k \ln (k))))$.
	\begin{algorithm}
    \renewcommand{\thealgorithm}{(ECO)}
	
	\caption{Clustering procedure for AI-block models}

\begin{algorithmic}[1]
\Procedure{ECO}{$S$, $\tau$, $\hat{\mathcal{X}}$}
    \State Initialize: $S = \{1,\dots,d\}$, $\hat{\chi}_{n,m}(a,b)$ for $a,b \in \{1,\dots,d\}$ and $l = 0$
    \While{$S \neq \emptyset$}
    		\State $l = l +1$
    		\If{ $|S| = 1$}
    			\State $\hat{O}_l = S$
    		\EndIf
    		\If{$|S| > 1 $}
    			\State $(a_l, b_l) = \arg \underset{a,b \in S}{\max} \, \hat{\chi}_{n,m}(a,b)$
    			\If{$\hat{\chi}_{n,m}(a_l,b_l) \leq \tau$}
    				\State $\hat{O}_l = \{a_l\}$
    			\EndIf
    			\If{$\hat{\chi}_{n,m}(a_l,b_l) > \tau$}
    				\State $\hat{O}_l = \{ s \in S : \hat{\chi}_{n,m}(a_l,s) \wedge \hat{\chi}_{n,m}(b_l,s) \geq \tau \}$
    			\EndIf
    		\EndIf
    		\State $S = S \setminus \hat{O}_l$
    	\EndWhile
    \State return $\hat{O} = (\hat{O}_l)_l$
\EndProcedure
\end{algorithmic}
\label{alg:rec_pratic}
\end{algorithm}

	In \ref{subsec:asympto}, we provide conditions under the regularity of the process ensuring that our algorithm is asymptotically consistent. These conditions involve $\beta$-mixing coefficients which are less stringent than $\varphi$-mixing used in the next section. Unlike in asymptotic analysis where the choice of the threshold becomes trivial, in a non-asymptotic framework, the algorithm's performance is influenced by the parameter $\tau$. In a non-asymptotic framework, when $\tau \approx 0$, the algorithm is prone to identifying the sole cluster as $\{1,\dots,d\}$, while a value of $\tau \approx 1$ suggests that the algorithm is likely to return the largest partition $\{\{1\},\dots,\{d\}\}$. Thus, the parameter $\tau$ serves as a threshold that determines the algorithm's tolerance to differentiate between the noise in the inference and the signal indicating asymptotic dependence. This discriminatory capability depends on factors such as the sample size $n$, the dimension $d$, and the proximity between the sub-asymptotic framework and the maximum domain of attraction. Consequently, selecting an appropriate threshold $\tau$ becomes a critical consideration. However, this challenge can be addressed through a non-asymptotic analysis of the algorithm, which we will discuss in the following section.
 
	\subsection{Estimation in growing dimensions}
	\label{subsec:grow_dim}
	
	We provide consistency results for our algorithm, allowing estimation in the case of growing dimensions, by adding non asymptotic bounds on the probability of consistently estimating the maximal element $\bar{O}$ of an AI-block model. Furthermore, this result provides an answer for how to leverage $\tau$ in Algorithm \ref{alg:rec_pratic}. The difficulty of clustering in AI-block models can be assessed via the size of the Minimal Extremal COrrelation ($\MECO$) separation between two variables in a same cluster:
	\begin{equation*}
		\label{eq:MECO}
		\MECO(\mathcal{X}) := \underset{a \overset{\bar{O}}{\sim} b}{\min} \, \chi(a,b).
	\end{equation*}
	In AI-block models, with Condition \ref{ass:A}, we always have $\MECO(\mathcal{X}) > \eta$ with $\eta = 0$. However, a large value of $\eta$ will be needed for retrieving consistently the partition $\bar{O}$ stationary observations. We are now ready to state the main result of this section.
	  
\begin{theorem} \label{thm:grow_dim}
    We consider $(\normalfont{\textbf{Z}_t}, t \in \mathbb{Z})$ be a $d$-multivariate stationary process following a AI-block model given in Definition \ref{def:AI_block_models} satisfying Condition \ref{ass:A} and algebraic $\varphi$-mixing distribution, $\varphi(n) \leq \lambda n^{-\zeta}$ where $\lambda > 0$ and $\zeta > 1$ Define
        
    \begin{equation*}
            d_m = \underset{a \neq b}{\max} \left| \chi_{m}(a,b) - \chi(a,b) \right|.
        \end{equation*}
        
        Let $(\tau, \eta)$ be parameters fulfilling
		
        \begin{align*}
		&\tau \geq d_m + C_1 k^{-1/2} + C_2 k^{-1} + C_3 \sqrt{\frac{(1+\gamma) \ln(d)}{k}}, \\ &\eta \geq d_m + C_1 k^{-1/2} + C_2 k^{-1} + C_3 \sqrt{\frac{(1+\gamma) \ln(d)}{k}} + \tau,
		\end{align*}
        
        where $C_1, C_2, C_3$ are universal constants depending only on $\lambda$ and $\zeta$, $k$ is the number of block maxima, and $\gamma > 0$. For a given $\mathcal{X}$ and its corresponding estimator $\hat{\mathcal{X}}$, if $\MECO(\mathcal{X}) > \eta$, then the output of Algorithm \ref{alg:rec_pratic} is consistent, i.e.,
		
        \begin{equation*}
			\mathbb{P}\left\{ \hat{O} = \bar{O} \right\} \geq 1-2(1+\sqrt{e})d^{-2\gamma}.
		\end{equation*}
    \end{theorem}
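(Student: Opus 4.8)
The plan is to condition on a single high-probability event on which all empirical extremal correlations are uniformly close to their population values, and then to argue \emph{deterministically} that on this event Algorithm~\ref{alg:rec_pratic} reconstructs $\bar{O}$ exactly.

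Set $\varepsilon := d_m + C_1 k^{-1/2} + C_2 k^{-1} + C_3\sqrt{(1+\gamma)\ln(d)/k}$ and define the event $\mathcal{E} := \{\max_{a\neq b}|\hat{\chi}_{n,m}(a,b)-\chi(a,b)| \le \varepsilon\}$. For a fixed pair I would split $|\hat{\chi}_{n,m}(a,b)-\chi(a,b)| \le |\hat{\chi}_{n,m}(a,b)-\chi_m(a,b)| + d_m$, peeling off the sub-asymptotic bias via $d_m \ge |\chi_m(a,b)-\chi(a,b)|$. Since $\hat{\chi}_{n,m}=2-\hat{\theta}_{n,m}$, $\chi_m = 2-\theta_m$, and $\nu\mapsto(1/2+\nu)/(1/2-\nu)$ is locally Lipschitz on the relevant range (after truncating $\hat{\nu}_{n,m}$ below $1/2$), the bivariate instance of Proposition~\ref{prop:concentration_inequality} transfers to $\hat{\chi}_{n,m}$: absorbing the Lipschitz constant into $C_1,C_2,C_3$ gives, for each pair, $\mathbb{P}\{|\hat{\chi}_{n,m}(a,b)-\chi_m(a,b)| \ge C_1 k^{-1/2}+C_2 k^{-1}+t\} \le 2(1+\sqrt{e})\exp(-t^2 k/C_3)$. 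Taking $t = C_3\sqrt{(1+\gamma)\ln(d)/k}$ (readjusting $C_3$) makes the exponential equal to $d^{-2(1+\gamma)}$, and a union bound over the at most $d^2$ pairs yields $\mathbb{P}(\mathcal{E}^{c}) \le 2(1+\sqrt{e})d^{-2\gamma}$.

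Next I would record the separation dichotomy valid on $\mathcal{E}$. By Condition~\ref{ass:A} together with \eqref{eq:indep_ext_corr}, $\chi(a,b) \ge \MECO(\mathcal{X}) > \eta$ when $a\overset{\bar{O}}{\sim}b$ and $\chi(a,b)=0$ otherwise. Hence on $\mathcal{E}$: $a\overset{\bar{O}}{\sim}b$ forces $\hat{\chi}_{n,m}(a,b) > \eta-\varepsilon \ge \tau$ (using $\eta \ge \varepsilon+\tau$), while $a\overset{\bar{O}}{\not\sim}b$ forces $\hat{\chi}_{n,m}(a,b) \le \varepsilon \le \tau$ (using $\tau \ge \varepsilon$); in particular within-block estimated correlations strictly exceed $\tau$ and between-block ones do not, so $\hat{\chi}_{n,m}(a,b)>\tau$ characterises membership in a common block of $\bar{O}$.

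Finally I would run an induction over the iterations of the \texttt{while} loop with the invariant that the current set $S$ is a union of blocks of $\bar{O}$; the base case $S=\{1,\dots,d\}$ is immediate. If $S$ contains a non-singleton block, the maximiser $(a_l,b_l)$ satisfies $\hat{\chi}_{n,m}(a_l,b_l)>\tau$, so by the dichotomy $a_l,b_l$ lie in a common block $\bar{O}_{g_0}\subseteq S$ and the rule $\hat{O}_l = \{s\in S : \hat{\chi}_{n,m}(a_l,s)\wedge\hat{\chi}_{n,m}(b_l,s)\ge\tau\}$ returns exactly $\bar{O}_{g_0}$; if every block inside $S$ is a singleton, then $\hat{\chi}_{n,m}(a_l,b_l)\le\tau$ and $\hat{O}_l=\{a_l\}$ is again a block. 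In either case $\hat{O}_l\in\bar{O}$ and $S\setminus\hat{O}_l$ stays a union of blocks, so the loop strips off the blocks one at a time and halts with $\hat{O}=\bar{O}$; combined with the bound on $\mathbb{P}(\mathcal{E})$ this is the claim. I expect the main obstacle to be the transfer step — controlling the only locally Lipschitz map $\nu\mapsto(1/2+\nu)/(1/2-\nu)$ uniformly, handling $\{\hat{\nu}_{n,m}\ge 1/2\}$, and keeping both the constant $2(1+\sqrt{e})$ and the bias $d_m$ in the exact form of the statement; once the dichotomy is established the combinatorial part follows the CORD-type analysis of \cite{10.1214/18-AOS1794}.
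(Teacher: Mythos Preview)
Your proposal is correct and mirrors the paper's proof: Lemma~\ref{lem:upper_bounds_ext_coeff} and Lemma~\ref{lem:exact_recovery} formalise precisely your transfer step and your deterministic dichotomy/induction, respectively, and the probability bound is obtained by the same union bound over at most $d^2$ pairs combined with the bivariate instance of Proposition~\ref{prop:concentration_inequality}. The Lipschitz concern you flag is resolved in Lemma~\ref{lem:upper_bounds_ext_coeff} by observing that the bivariate (sub-asymptotic) extremal coefficient lies in $[1,2]$, which confines the madogram to $[0,1/6]$ where $\nu\mapsto(1/2+\nu)/(1/2-\nu)$ has derivative bounded by $9$.
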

	  
	The analysis of Algorithm \ref{alg:rec_pratic} can be separated into two distinct components: an analytic part that provides conditions ensuring $\hat{O} = \bar{O}$, as detailed in Lemma \ref{lem:exact_recovery}, and a stochastic part that deals with concentration results for $\hat{\chi}_{n,m}$ in Proposition \ref{prop:concentration_inequality}, directly stated in the proof of Theorem \ref{thm:grow_dim}. In Section \ref{sec:example}, we provide an example of a mixing process that satisfies all the conditions stated in Theorem \ref{thm:grow_dim}. As Theorem \ref{thm:grow_dim} is not concerned with asymptotics, we did not actually assume Condition \ref{ass:domain}. A link between $\textbf{M}_m$ and $\textbf{X}$ is implicitly  provided through the bias term $d_m$ which measures the distance between $\chi_m(a,b)$ and $\chi(a,b)$. This quantity vanishes when Condition \ref{ass:domain} holds as $m \rightarrow \infty$.
	 
	Some comments on the implications of Theorem \ref{thm:grow_dim} are in order. On a high level, larger dimension $d$ and {bias} $d_m$ lead to a higher threshold $\tau$. The effects of the dimension $d$ and the {bias} $d_m$ are intuitive: larger dimension or more {bias} make the partition recovery problem more difficult. It is clear that the partition recovery problem becomes more difficult as the dimension or bias  increases. This is reflected in the bound of the $\MECO$ value below which distinguish between noise and asymptotic independence is impossible by our algorithm. Thus, whereas the dimension $d$ increases, the dependence between each component should be stronger in order to distinguish between the two. In other words, for alternatives that are sufficiently separated from the asymptotic independence case, the algorithm will be able to distinguish between asymptotic independence and noise at the $\sqrt{\ln(d)k^{-1}}$ scale. For a more quantitative discussion, our algorithm is able to recover clusters when the data dimension scales at a polynomial rate, i.e., $d = o(n^{p}),$ with $p > 0$ as $\eta$ in Theorem \ref{thm:grow_dim} decreases with increasing $n$.

 {The order of the threshold $\tau$ involves known quantity such as $d$ and $k$ and a unknown parameter $d_m$. For the latter, there is no simple manner to choose optimally this parameter, as there is no simple way to determine how fast is the convergence to the asymptotic extreme behavior, or how far into the tail the asymptotic block dependence structure appears. In particular, Condition \ref{ass:domain} does not contain any information about the rate of convergence of $C_m$ to $C_\infty$. More precise statements about this rate can be made with second order conditions. Let a regularly varying function $\Psi :\mathbb{N} \rightarrow (0,\infty)$ with coefficient of regular variation $\rho_{\Psi} < 0$ and a continuous non-zero function $S$ on $[0,1]^d$ such that
    \begin{equation}
        \label{eq:second_order}
        C_m(\textbf{u}) - C_\infty(\textbf{u}) = \Psi(m) S(\textbf{u}) + o(\Psi(m)), \quad  \mbox{ for } m \rightarrow \infty,
    \end{equation}
    uniformly in $\textbf{u} \in [0,1]^d$ (see, e.g., \cite{bucher2019second, 10.1214/20-AOS1957} for a proper introduction to this condition). In this case, we can show that $d_m = O(\Psi(m))$. In the typical case $\Psi(m) = c\,t^{\rho_{\Psi}}$ with $c > 0$, choosing $m$ proportional to $n^{1/(1-\rho_\Psi)}$ leads to the optimal convergence rate $n^{\rho_\Psi / (1-2\rho_\Psi)}$ (see \cite{drees1998best}). However, there is no simple way to know in advance or infer the value of $\rho_\Psi$ and, in practice, it is advisable to use a data-driven procedure to select the threshold.
    }
	
\subsection{Data-driven selection of the threshold parameter}
\label{subsec:data_driv}

The performance of Algorithm \ref{alg:rec_pratic} depends crucially on the value of the threshold parameter $\tau$. This  threshold involves known quantities such as $d$ and $k$ and a unknown parameter $d_m$ (see  Theorem \ref{thm:grow_dim}). For the latter, there is no simple manner to choose optimally this parameter, as there is no simple way to determine how fast is the convergence to the asymptotic extreme behavior, or how far into the tail the asymptotic block dependence structure appears. Second order conditions, which are commonly used in the literature to ensure convergence to the stable tail dependence function at a certain rate, are theoretically relevant (see \cite{dombry2019maximum, einmahl2012mestimator, fougeres2015bias}for examples). However, finding the optimal value for the block length parameter remains a challenging task.

In practice, it is advisable to use a data-driven procedure to select the threshold in Algorithm \ref{alg:rec_pratic}. The idea is to use the $\SECO$ criteria presented in Equation \eqref{eq:SECO}. Let $\mathcal{L}((\textbf{Z}_t, t \in \mathbb{Z})) \sim O$, given a partition $\hat{O} = \{ \hat{O}_g \}_g$, we know from \cite{FERREIRA2011586} that the $\SECO$ similarity given by \eqref{eq:SECO} is equal to $0$ if and only if $\hat{O} \leq \bar{O}$. We thus construct a loss function given by the $\SECO$ where we evaluate its value over a grid of the $\tau$ values. The value of $\tau$ for which the $\SECO$ similarity has minimum values is also the value of $\tau$ for which we have consistent recovery of our clusters. The based estimator of the $\SECO$ in \eqref{eq:SECO} is thus defined as
\begin{equation}
    \label{eq:seco_cv}
    \widehat{\SECO}_{n,m}(\hat{O}) = \sum_{g} \hat{\theta}^{(\hat{O}_g)}_{n,m} - \hat{\theta}_{n,m}.
\end{equation}
Let $\widehat{\mathcal{O}}$ be a collection of partitions computed with Algorithm \ref{alg:rec_pratic}, by varying $\tau$ around its theoretical optimal value, of order $(d_m + \sqrt{\ln(d) k^{-1}})$, on a fine grid. For any $\hat{O} \in \widehat{\mathcal{O}}$, we evaluate $\widehat{\SECO}_{n,m}$ in \eqref{eq:seco_cv}. In practice, the $\widehat{\SECO}(\hat{O})$ could be minimal for several values of $\tau$. For example, if we incorrectly group all the components of the random vector into a single cluster. Therefore, we recommend retaining the partition obtained for the minimal value of $\widehat{\SECO}(\hat{O})$ associated with the largest parameter $\tau$, which results in the thinnest partition of the variables of the random vector. Proposition \ref{prop:cv} offers theoretical support for this procedure.
\begin{proposition}
\label{prop:cv}  
 We consider $(\normalfont{\textbf{Z}_t}, t \in \mathbb{Z})$ to be a $d$-multivariate stationary process following an AI-block model given in Definition \ref{def:AI_block_models} with algebraic $\varphi$-mixing distribution, $\varphi(n) \leq \lambda n^{-\zeta}$ where $\lambda > 0$ and $\zeta > 1$. Let $\bar{O} = \{\bar{O}_1,\dots,\bar{O}_G\}$ be the thinnest partition given by Theorem \ref{thm:unicity} with corresponding sizes $d_1,\dots,d_G$. Let $\hat{O} = \{ \hat{O}_1,\dots,\hat{O}_I\}$ be any partition of $\{1,\dots,d\}$ with corresponding sizes $d_1,\dots,d_I$. Define
\begin{equation*}
    D_m = \max \left\{ \left|\sum_{g=1}^G \theta_m^{(\bar{O}_g)} - \sum_{g=1}^G \theta^{(\bar{O}_g)} \right|, \left|\sum_{i=1}^I \theta_m^{(\hat{O}_i)} - \sum_{i=1}^I \theta^{(\hat{O}_i)} \right|\right\},
\end{equation*}
Then, there exists a constant $c > 0$, such that, if $\hat{O} \not\leq \bar{O}$ and
\begin{equation}
    \label{eq:bound_cv}
    \SECO(\hat{O}) > 2\left(D_m + c\sqrt{\frac{\ln(d)}{k}} \max(G,I) \max(\vee_{g=1}^G d_g^2, \vee_{i=1}^I d_i^2) \right), 
\end{equation}
it holds that
\begin{equation*}
    \mathbb{E}[ \widehat{\SECO}_{n,m}(\bar{O})] < \mathbb{E}[\widehat{\SECO}_{n,m}(\hat{O})].
\end{equation*}
\end{proposition}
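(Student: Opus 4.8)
The plan is to turn the comparison between the two expected criteria into a deterministic gap supplied by $\SECO(\hat O)$, from which two error budgets are subtracted: the sub-asymptotic bias $D_m$ and a stochastic error governed by Proposition \ref{prop:concentration_inequality}. The key first observation is that the full-dimensional term $\hat\theta_{n,m}=\hat\theta_{n,m}^{(\{1,\dots,d\})}$ appears identically in $\widehat{\SECO}_{n,m}(\bar O)$ and in $\widehat{\SECO}_{n,m}(\hat O)$ (see \eqref{eq:seco_cv}), so in the difference it cancels:
\begin{equation*}
\mathbb{E}\bigl[\widehat{\SECO}_{n,m}(\hat O)\bigr]-\mathbb{E}\bigl[\widehat{\SECO}_{n,m}(\bar O)\bigr]=\sum_{i=1}^{I}\mathbb{E}\bigl[\hat\theta_{n,m}^{(\hat O_i)}\bigr]-\sum_{g=1}^{G}\mathbb{E}\bigl[\hat\theta_{n,m}^{(\bar O_g)}\bigr].
\end{equation*}
It thus suffices to show that this quantity is strictly positive under \eqref{eq:bound_cv}.

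Next I would introduce the sub-asymptotic extremal coefficients $\theta_m^{(O_g)}=(1/2+\nu_m^{(O_g)})/(1/2-\nu_m^{(O_g)})$ of the block maxima and isolate the deterministic contribution to the gap through the telescoping decomposition
\begin{equation*}
\sum_{i}\theta_m^{(\hat O_i)}-\sum_{g}\theta_m^{(\bar O_g)}=\Bigl(\sum_i\theta_m^{(\hat O_i)}-\sum_i\theta^{(\hat O_i)}\Bigr)+\Bigl(\sum_i\theta^{(\hat O_i)}-\sum_g\theta^{(\bar O_g)}\Bigr)+\Bigl(\sum_g\theta^{(\bar O_g)}-\sum_g\theta_m^{(\bar O_g)}\Bigr).
\end{equation*}
The two outer brackets are bounded in absolute value by $D_m$ by the very definition of $D_m$. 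The middle bracket equals $\SECO(\hat O)-\SECO(\bar O)$, and $\SECO(\bar O)=0$: indeed the AI-block assumption gives $H=\prod_g H^{(\bar O_g)}$, so the $\textbf{X}^{(\bar O_g)}$ are independent and $x^{\theta}=\prod_g x^{\theta^{(\bar O_g)}}$, i.e.\ $\theta=\sum_g\theta^{(\bar O_g)}$ (in agreement with the characterization of \cite{FERREIRA2011586} recalled after \eqref{secoOhat}). Hence $\sum_i\theta_m^{(\hat O_i)}-\sum_g\theta_m^{(\bar O_g)}\ge\SECO(\hat O)-2D_m$.

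The technical heart is the control of the plug-in estimation bias $\bigl|\mathbb{E}[\hat\theta_{n,m}^{(O_g)}]-\theta_m^{(O_g)}\bigr|$ for an arbitrary block $O_g$ of size $d_g$. A subprocess of a $\varphi$-mixing process is again $\varphi$-mixing with no larger coefficients, so Proposition \ref{prop:concentration_inequality} applied to $(\textbf{Z}_t^{(O_g)},t\in\mathbb{Z})$ yields a sub-Gaussian deviation bound for $|\hat\nu_{n,m}^{(O_g)}-\nu_m^{(O_g)}|$; integrating this tail — with the truncation level taken at the $\sqrt{\ln(d)/k}$ scale so that the $d_g$ prefactor is absorbed logarithmically — gives $\mathbb{E}|\hat\nu_{n,m}^{(O_g)}-\nu_m^{(O_g)}|\le c''\sqrt{\ln(d)/k}$ for a constant $c''=c''(\lambda,\zeta)$. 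To pass from $\nu$ to $\theta$ I would use the algebraic identity $\hat\theta_{n,m}^{(O_g)}-\theta_m^{(O_g)}=\frac{\hat\nu_{n,m}^{(O_g)}-\nu_m^{(O_g)}}{(1/2-\hat\nu_{n,m}^{(O_g)})(1/2-\nu_m^{(O_g)})}$ together with $\theta_m^{(O_g)}\le d_g$, which forces $1/2-\nu_m^{(O_g)}\ge (d_g+1)^{-1}$. On the event where $|\hat\nu_{n,m}^{(O_g)}-\nu_m^{(O_g)}|\le\tfrac12(d_g+1)^{-1}$ the product in the denominator is at least $\tfrac12(d_g+1)^{-2}$, so $|\hat\theta_{n,m}^{(O_g)}-\theta_m^{(O_g)}|\le 2(d_g+1)^2|\hat\nu_{n,m}^{(O_g)}-\nu_m^{(O_g)}|$; on the complementary event one uses the deterministic bounds $1\le\theta_m^{(O_g)}\le d_g$ and $\hat\theta_{n,m}^{(O_g)}=O(k)$ (the latter because in each coordinate the scaled ranks have fixed mean close to $1/2$ while $\bigvee_{j\in O_g}\hat U_{n,m,i}^{(j)}\le 1$, so $\hat\nu_{n,m}^{(O_g)}\le\tfrac12-\Omega(1/k)$) together with the exponentially small probability of that event, whose contribution is of lower order since \eqref{eq:bound_cv} forces $k$ to be polynomially large in $d$. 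Collecting the two regimes, $\bigl|\mathbb{E}[\hat\theta_{n,m}^{(O_g)}]-\theta_m^{(O_g)}\bigr|\le c' d_g^2\sqrt{\ln(d)/k}$ for a universal $c'=c'(\lambda,\zeta)$.

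Finally I would assemble the estimates. Summing the last display over the (at most $G$, resp.\ $I$) blocks,
\begin{equation*}
\mathbb{E}\bigl[\widehat{\SECO}_{n,m}(\hat O)\bigr]-\mathbb{E}\bigl[\widehat{\SECO}_{n,m}(\bar O)\bigr]\ge\SECO(\hat O)-2D_m-2c'\sqrt{\tfrac{\ln d}{k}}\,\max(G,I)\,\max\bigl(\vee_{g=1}^{G}d_g^2,\vee_{i=1}^{I}d_i^2\bigr),
\end{equation*}
which is strictly positive precisely under hypothesis \eqref{eq:bound_cv} with $c=c'$. I expect the third step to be the main obstacle: the plug-in map $\nu\mapsto(1/2+\nu)/(1/2-\nu)$ is not globally Lipschitz and degenerates as $\nu\uparrow 1/2$, so the deviation must be localized carefully — this localization is exactly what produces the $d_g^2$ factor in the bound — and one then has to show that the rare event on which $\hat\nu_{n,m}^{(O_g)}$ leaves a neighbourhood of $\nu_m^{(O_g)}$ contributes only a lower-order term; obtaining the $\sqrt{\ln(d)/k}$ rate, rather than something larger, also relies on the correct choice of truncation level when integrating the $d$-dependent sub-Gaussian tail of Proposition \ref{prop:concentration_inequality}.
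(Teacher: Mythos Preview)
Your proposal is correct and follows essentially the same route as the paper's proof: a triangle/telescoping decomposition isolating $\SECO(\hat O)-\SECO(\bar O)$ and the bias $D_m$, the Lipschitz bound $|\hat\theta_{n,m}^{(O_g)}-\theta_m^{(O_g)}|\le(d_g+1)^2|\hat\nu_{n,m}^{(O_g)}-\nu_m^{(O_g)}|$ inherited from Lemma~\ref{lem:upper_bounds_ext_coeff}, and a tail-integration of Proposition~\ref{prop:concentration_inequality} at the $\sqrt{\ln(d)/k}$ truncation level to obtain the per-block moment bound $c'\,d_g^2\sqrt{\ln(d)/k}$.

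The only organizational difference is that the paper bounds $E_1=|\widehat{\SECO}_{n,m}(\bar O)-\SECO_m(\bar O)|$ and $E_2$ as whole random variables (via $\mathbb{E}[E_1]\le(\mathbb{E}[E_1^2])^{1/2}$ and a union bound over the $G$ blocks), whereas you first cancel the common full-dimensional term $\hat\theta_{n,m}$ and then control each block-wise bias $|\mathbb{E}[\hat\theta_{n,m}^{(O_g)}]-\theta_m^{(O_g)}|$ separately before summing. Your cancellation is a mild improvement: the paper's $E_1,E_2$ in principle also contain $|\hat\theta_{n,m}-\theta_m|$, which its union bound over the $G$ (resp.\ $I$) clusters silently drops; your route sidesteps that entirely. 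You are also more explicit than the paper about the domain issue for the map $\nu\mapsto(1/2+\nu)/(1/2-\nu)$ --- the paper simply asserts the $(d_g+1)^2$-Lipschitz inequality, while you localize and handle the complementary event; either way the resulting rate is the same.
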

However, the bound presented in Equation \eqref{eq:bound_cv} is overly pessimistic since it exhibits polynomial growth with respect to cluster sizes. Nevertheless, when we consider the scenario where $n \rightarrow \infty$ with $d$ fixed, then under Condition \ref{ass:domain}, this condition simplifies to $\SECO(\hat{O}) > 0$, which holds true for every $\hat{O} \not\leq \bar{O}$ (see Appendix \ref{prop:ineq} in the supplementary material). Therefore, despite the pessimistic nature of this bound, the asymptotic relevance of choosing the threshold parameter based on data-driven approaches remains intact. Additionally, numerical studies provide support for the effectiveness of $\SECO$ as an appropriate criterion for determining the threshold parameter for a suitable number of data and for important cluster sizes (see Section \ref{sec:numerical_examples}). Furthermore, we establish the weak convergence of an estimator for $\SECO(O)$ when $\mathcal{L}((\textbf{Z}_t, t \in \mathbb{Z})) \sim O$ (we refer to Appendix \ref{subsec:functional_central_limit_theorem} for detailed information).

\section{Hypotheses discussion for a multivariate random persistent process}
\label{sec:example}
  
A trivial example of an AI-block model is given by a partition $O$ such that $\mathcal{L}((\textbf{Z}_t^{(O_g)}, t \in \mathbb{Z})) \in D(H^{(O_g)})$ for $g \in \{1,\dots,G\}$ and $\mathcal{L}((\textbf{Z}_t^{(O_1)}, t \in \mathbb{Z})),\dots,\mathcal{L}((\textbf{Z}_t^{(O_G)}, t \in \mathbb{Z}))$ are independent. In this simple model, the peculiar dependence structure under study is not inherent of large values of the stationary law of the process.  
 
More interestingly, in this section we will focus on a process where the dependence between clusters disappears in the distribution tails. To this aim, we recall here a $\varphi$-algebraically mixing process. The interested reader is referred for instance to \cite{bucher2014extreme}. We show that Conditions \ref{ass:domain} and \ref{ass:A} hold with a bit more work.
 
 Let $D$ denote a copula and consider i.i.d $d$-dimensional random vectors $\boldsymbol{Z}_0, \boldsymbol{\xi}_1, \boldsymbol{\xi}_2, \dots$ from $D$ and  independent  Bernoulli  random variables $I_1, I_2, \dots$ i.i.d. with $\mathbb{P}\{I_t = 1\} = p \in (0,1]$. For $t = 1,2, \dots$, define the stationary random process $(\textbf{Z}_t, t\in \mathbb{Z})$ by
\begin{equation}
	\label{eq:random_repetition}
    \textbf{Z}_t = \boldsymbol{\xi}_t \delta_{1}(I_t) + \boldsymbol{Z}_{t-1} \delta_{0}(I_t),
\end{equation}
where we suppose without loss of generality that the process is defined for all $t \in \mathbb{Z}$ using stationarity. The persistence of the process $(\textbf{Z}_t, t \in \mathbb{Z})$ arises from repeatable values in \eqref{eq:random_repetition}. From this persistence, $(\textbf{Z}_t, t \in \mathbb{Z})$ is $\varphi$-mixing with coefficient of order $O((1-p)^n)$ \cite[Lemma B.1]{bucher2014extreme}, hence algebraically mixing. 

 Assuming that the copula $D$ belongs to the (i.i.d.) copula domain of attraction of an extreme value copula $D_\infty^{(iid)}$, denoted as
\begin{equation*}
    D_m(\textbf{u})= \{D(\textbf{u}^{1/m})\}^m \longrightarrow D_\infty^{(iid)}(\textbf{u}), \quad (m \rightarrow \infty).
\end{equation*}
 {Here, $D_m$ represents the copula of the componentwise block maximum of size $m$ based on the serially independent sequence $(\boldsymbol{\xi}_t, t \in \mathbb{N})$.}

 According to \cite[Proposition 4.1]{bucher2014extreme}, if $C_m$ denotes the copula of the componentwise block maximum of size $m$ based on the sequence $(\textbf{Z}_t, t \in \mathbb{N})$, then
\begin{equation*}
    C_m(\textbf{u}) \underset{m \rightarrow \infty}{\longrightarrow} D_\infty^{(iid)}(\textbf{u}), \quad \textbf{u} \in [0,1]^d.
\end{equation*}
This implies that Condition \ref{ass:domain} is satisfied.

 Consider the multivariate outer power transform of a Clayton copula with parameters $\theta > 0$ and $\beta \geq 1$, defined as:
\begin{equation*}
    D(\textbf{u}; \theta, \beta) = \left[ 1 + \left\{ \sum_{j=1}^d (\{u^{(j)}\}^{-\theta} -1)^\beta \right\}^{1/\beta} \right]^{-1/\theta}, \quad \textbf{u} \in [0,1]^d.
\end{equation*}
 The copula of multivariate componentwise maxima of an i.i.d. sample of size $m$ from a continuous distribution with copula $D(\boldsymbol{\cdot};\theta,\beta)$ is given by:

\begin{equation}
    \label{eq:clayton_powerm}
\left\{ D\left(\{u^{(1)}\}^{1/m}, \dots, \{u^{(d)}\}^{1/m} ; \theta, \beta\right) \right\}^m = D\left(u^{(1)},\dots,u^{(d)}; \theta/m, \beta\right),
\end{equation}

 As $m \rightarrow \infty$, this copula converges to the Logistic copula with shape parameter $\beta \geq 1$:
\begin{equation*}
    D_\infty^{(iid)}(\textbf{u}) = D(\textbf{u} ; \beta) = \underset{m \rightarrow \infty}{\lim} \, D\left(u^{(1)},\dots,u^{(d)} ; \theta/m, \beta\right) = \exp \left[ - \left\{ \sum_{j=1}^d (- \ln u^{(j)})^\beta \right\}^{1/\beta} \right],
\end{equation*}
 uniformly in $\textbf{u} \in [0,1]^d$. This result, originally stated in \cite[Proposition 4.3]{bucher2014extreme} for the bivariate case, can be extended to an arbitrary dimension without further arguments. Now, consider the following nested Archimedean copula given by:
\begin{equation}
    \label{eq:NAC}
    D\left(D^{(O_1)}(\textbf{u}^{(O_1)} ; \theta, \beta_1), \dots, D^{(O_G)}(\textbf{u}^{(O_g)} ; \theta, \beta_G) ; \theta,\beta_0\right).
\end{equation}
We aim to show that this copula is in the domain of attraction of an AI-block model. That is the purpose of the proposition stated below.
\begin{proposition}
    \label{prop:domain_attraction_repetition_model}
    Consider $1 \leq \beta_0 \leq \min\{\beta_1 ,\dots, \beta_G\}$, then the nested Archimedean copula given in \eqref{eq:NAC} is in the copula domain of attraction of an extreme value copula given by
    \begin{equation*}
    	D\left(D^{(O_1)}(\normalfont{\textbf{u}^{(O_1)}} ; \beta_1), \dots, D^{(O_G)}(\normalfont{\textbf{u}^{(O_G)}}; \beta_G) ; \beta_0\right).
	\end{equation*}
	In particular, taking $\beta_0 = 1$ gives an AI-block model where extreme value random vectors $\normalfont{\textbf{X}^{(O_g)}}$ correspond to a Logistic copula with parameter shape $\beta_g$.
\end{proposition}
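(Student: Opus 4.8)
The plan is to put the copula \eqref{eq:NAC} in closed form, prove an exact parameter-shrinkage identity extending \eqref{eq:clayton_powerm}, and then let the Clayton parameter go to zero. First I would use the generator of the outer power transform of the Clayton copula, $\psi_{\theta,\beta}(t)=(1+t^{1/\beta})^{-1/\theta}$, with $\psi_{\theta,\beta}^{-1}(u)=(u^{-\theta}-1)^{\beta}$; then $\psi_{\theta,\beta_0}^{-1}\circ\psi_{\theta,\beta_g}(t)=t^{\beta_0/\beta_g}$, whose derivative is completely monotone exactly when $\beta_0\le\beta_g$, so the hypothesis $1\le\beta_0\le\min_g\beta_g$ is precisely the nesting condition making \eqref{eq:NAC} a copula. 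Substituting the inner Archimedean structure into the outer one collapses the nested copula, which I denote $C(\textbf{u};\theta,\boldsymbol{\beta})$ with $\boldsymbol{\beta}=(\beta_0,\dots,\beta_G)$, to
\[
C(\textbf{u};\theta,\boldsymbol{\beta})=\Big[1+\Big(\sum_{g=1}^{G}\Big(\sum_{j\in O_g}\big((u^{(j)})^{-\theta}-1\big)^{\beta_g}\Big)^{\beta_0/\beta_g}\Big)^{1/\beta_0}\Big]^{-1/\theta}.
\]

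Next I would compute the copula of the componentwise maximum of an i.i.d.\ sample of size $m$ with copula $C(\cdot;\theta,\boldsymbol{\beta})$, namely $\{C(\textbf{u}^{1/m};\theta,\boldsymbol{\beta})\}^{m}$. Since $\theta$ appears above only through the terms $(u^{(j)})^{-\theta}$ and the outer exponent $-1/\theta$, replacing $u^{(j)}$ by $(u^{(j)})^{1/m}$ and raising the result to the power $m$ turns these into $(u^{(j)})^{-\theta/m}$ and $-m/\theta=-1/(\theta/m)$, which gives the exact identity $\{C(\textbf{u}^{1/m};\theta,\boldsymbol{\beta})\}^{m}=C(\textbf{u};\theta/m,\boldsymbol{\beta})$; the right-hand side is again a copula because the nesting condition does not involve $\theta$. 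I would then let $\theta'=\theta/m\downarrow 0$: using $(u^{(j)})^{-\theta'}-1=\theta'(-\ln u^{(j)})+o(\theta')$, the inner bracket becomes $1+\theta' L(\textbf{u})+o(\theta')$ with $L(\textbf{u})=\big(\sum_{g}(\sum_{j\in O_g}(-\ln u^{(j)})^{\beta_g})^{\beta_0/\beta_g}\big)^{1/\beta_0}$, so $C(\textbf{u};\theta/m,\boldsymbol{\beta})\to e^{-L(\textbf{u})}$ pointwise on $[0,1]^d$, hence uniformly since the limit is a continuous copula. Finally, writing $v_g=D^{(O_g)}(\textbf{u}^{(O_g)};\beta_g)=\exp\{-(\sum_{j\in O_g}(-\ln u^{(j)})^{\beta_g})^{1/\beta_g}\}$ so that $(-\ln v_g)^{\beta_0}=(\sum_{j\in O_g}(-\ln u^{(j)})^{\beta_g})^{\beta_0/\beta_g}$, I recognize $e^{-L(\textbf{u})}=\exp\{-(\sum_g(-\ln v_g)^{\beta_0})^{1/\beta_0}\}=D(v_1,\dots,v_G;\beta_0)$, the announced nested logistic extreme value copula; this proves the first assertion.

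For the case $\beta_0=1$ the outer logistic copula degenerates, $D(v_1,\dots,v_G;1)=\prod_g v_g$, so the limit is $C_\infty(\textbf{u})=\prod_{g=1}^{G}D^{(O_g)}(\textbf{u}^{(O_g)};\beta_g)$; by Proposition \ref{prop:Cevt} this is an extreme value copula associated with a random vector $\textbf{X}$ whose blocks $\textbf{X}^{(O_g)}$ are independent, each with logistic copula of shape $\beta_g$. Taking $D=C(\cdot;\theta,\boldsymbol{\beta})$ as the innovation copula of the persistence process \eqref{eq:random_repetition} and invoking the argument recalled just before the proposition (\cite[Proposition 4.1]{bucher2014extreme}), the block-maxima copula $C_m$ of $(\textbf{Z}_t,t\in\mathbb{Z})$ converges to the same $C_\infty$, so Condition \ref{ass:domain} holds and, together with $C_\infty=\prod_g C_\infty^{(O_g)}$, this is exactly Definition \ref{def:AI_block_models} with partition $\{O_1,\dots,O_G\}$---an AI-block model with logistic blocks (and, if moreover $\beta_g>1$ for every $g$ so that this partition is maximal, Condition \ref{ass:A} holds as well, every within-block pair having extremal correlation $2-2^{1/\beta_g}>0$). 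The genuinely delicate steps are the two algebraic ones---deriving the collapsed closed form and checking the shrinkage identity in exactly the right shape---after which the passage to the exponential limit and the identification with the nested logistic copula are routine.
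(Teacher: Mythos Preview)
Your proof is correct and, in fact, cleaner than the paper's, but it proceeds along a genuinely different route. The paper never collapses the nested copula to a single closed form. Instead, it applies the scalar shrinkage identity \eqref{eq:clayton_powerm} only to the inner copulae, writes the resulting error as $E_{0,m}$, and then splits it by a triangle inequality into two pieces: $E_{1,m}$, which compares the outer copula $C_{\theta,\beta_0}(\,\cdot\,^{1/m})^m$ to its Gumbel limit $C_{0,\beta_0}$ (this is again \eqref{eq:clayton_powerm}, now on the outer layer), and $E_{2,m}$, which compares the inner arguments $C^{(O_g)}_{\theta/m,\beta_g}$ to their Gumbel limits $C^{(O_g)}_{0,\beta_g}$ using the Lipschitz property of copulae. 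That the limiting nested logistic is an extreme value copula is not computed but quoted from \cite{hofert2018hierarchical}.

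Your approach buys directness: by identifying $\psi_{\theta,\beta_0}^{-1}\circ\psi_{\theta,\beta_g}(t)=t^{\beta_0/\beta_g}$ you obtain the closed form, and then the \emph{global} shrinkage identity $\{C(\textbf{u}^{1/m};\theta,\boldsymbol{\beta})\}^{m}=C(\textbf{u};\theta/m,\boldsymbol{\beta})$ follows in one line, so the whole limit reduces to a single Taylor expansion and an explicit identification of the nested logistic form. The paper's approach buys modularity: it never needs the collapsed expression and would work verbatim for any nested Archimedean family whose inner and outer components each satisfy an identity like \eqref{eq:clayton_powerm}. Your remark that pointwise convergence of copulae to a continuous copula upgrades to uniform convergence (equicontinuity of $1$-Lipschitz functions on a compact) is exactly the device the paper uses implicitly, and your additional observation about Condition \ref{ass:A} when $\beta_g>1$ is correct though not part of the statement.
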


From the last conclusion of Proposition \ref{prop:domain_attraction_repetition_model}, we obtain Condition \ref{ass:domain}, that is $(\textbf{Z}_t, t \in \mathbb{Z})$ in \eqref{eq:random_repetition} is in max-domain of attraction of an AI-block model. Noticing that the exponent measure of each cluster is absolutely continuous with respect to the Lebesgue measure, Condition \ref{ass:A} is thus valid.

\begin{remark}\label{dmiid}
 {Notice that, using results from \cite{bucher2014extreme, 10.1214/20-AOS1957}, in the i.i.d. case, i.e. $p=1$, there exists an auxiliary function $\Psi_D$ for $D_m$ with $\Psi_D(m) = O(m^{-1})$. By using considerations after Equation \eqref{eq:second_order}, we thus obtain $d_m = O(m^{-1})$.}
\end{remark}

\section{Numerical examples}
\label{sec:numerical_examples}

\subsection{Numerical results}
\label{sec:num}
In this section, we investigate the finite-sample performance of our algorithm to retrieve clusters in AI-block models. The results in this section can be reproduced using the code made available at \href{https://github.com/Aleboul/ai_block_model}{https://github.com/Aleboul/ai\_block\_model}. We consider a number of AI-block models of increasing complexity. We design three resulting partitions in the limit model {$C_\infty$:}
\begin{enumerate}[label = \textcolor{frenchblue}{E\arabic*}, leftmargin=*]
	\setcounter{enumi}{0}
	\item  {$C_\infty$} is composed of two blocks $O_1$ and $O_2$, of equal lengths where  {$C^{(O_1)}_\infty$} and  {$C^{(O_2)}_\infty$} are Logistic extreme value copulae with parameters set to $\beta_1 = \beta_2 =10/7$. \label{exp:E1}
	\item {$C_\infty$} is composed of $G=5$ blocks of random sample sizes $d_1,\dots,d_5$ from a multinomial distribution with  parameter $q_g = 0.5^g$ for $g \in \{1,\dots,4\}$ and $q_5 =1- \sum_{g=1}^{4}q_g$. Each random vector is distributed according to a Logistic distribution where parameters $\beta_g = 10/7$ for $g \in \{1,\dots,5\}$. \label{exp:E2}
	\item We consider the same model as \ref{exp:E2} where we add $5$ singletons. Then we have $10$ resulting  clusters. Model with singletons are known to be the hardest model to recover in the clustering literature.\label{exp:E3}
\end{enumerate}

{We consider here observations from the model described in Equation \eqref{eq:random_repetition} n Section \ref{sec:example}. Here, the copula $D$ is derived from a nested Archimedean copula, as indicated in Equation \eqref{eq:NAC}. Specifically, the outer Power Clayton copula with a parameter $\beta_0 = 1$ serves as the ``mother'' copula, while the outer Power Clayton copula with parameters $\beta_1 = \dots = \beta_G = 10/7$ act as the ``child'' copulae. It is worth noting that the copula $D_m$ does not fall under the category of an extreme value copula. This can be observed by considering two observations, $u^{(i)}$ and $u^{(j)}$, belonging to the same cluster $O_1$. In this case, the nested Archimedean copula presented in Equation \eqref{eq:NAC} takes the following form:}

\begin{equation*}
    D^{(O_1)}(\textbf{1},u^{(i)},u^{(j)},\textbf{1} ; \theta,\beta_1),
\end{equation*}
 {where the margins for the indices outside of $i$ and $j$ are considered as $1$. Consequently, the dependence is determined by an outer Power Clayton copula that does not exhibit max-stability. Similarly, when $i$ and $j$ belong to different clusters, the nested Archimedean copula in Equation \eqref{eq:NAC} follows the expression:}
\begin{equation*}
    D(\textbf{1},u^{(i)},u^{(j)},\textbf{1} ; \theta,1),
\end{equation*}
 representing a Clayton copula. It is worth noting that indices in different clusters exhibit dependence when the max-domain of attraction is not yet reached. This framework is particularly relevant as it allows us to evaluate the effectiveness of the proposed method in estimating the extremal dependence structure. We set $\theta = 1$ for every copula, as it does not alter the domain of attraction. Based on Proposition \ref{prop:domain_attraction_repetition_model} and Proposition 4.1 of \cite{bucher2014extreme}, we know that $C_m$ falls within the max domain of attraction of the corresponding copula $C_\infty$ defined in Experiments \ref{exp:E1}-\ref{exp:E3}. In other words, it represents an AI-block model with a Logistic dependence structure for the marginals. We simulate them using the method proposed by the copula \texttt{R} package (\cite{nacopula}). The goal of our algorithm is to cluster $d$ variables in $\mathbb{R}^n$. Several simulation frameworks are considered and detailed in the following. 
\begin{enumerate}[label = \textcolor{frenchblue}{F\arabic*}, leftmargin=*]
    \setcounter{enumi}{0}
	\item We first investigate the choice of the intermediate sequence $m$ of the block length used for estimation. We let $m \in \{3,6,\dots,30\}$ with a fixed sample size $n = 10 000$ and $k = \lfloor n/m \rfloor$. \label{fra:F1}
	\item We compute the performance of the structure learning method for varying sample size $n$. Since the value of $m$ which is required for consistent  estimation  is unknown in practice we choose $m = 20$. \label{fra:F2}
    \item We show the relationship between the average $\SECO$ and exact recovery rate of the method presented in Section \ref{subsec:data_driv}. We use the case $n = 16 000$, $k = 800$ and $d = 1600$ to study the ``large $k$, large $d$'' of our approach. \label{fra:F3}
\end{enumerate}

In the simulation study, we use the fixed threshold $\alpha = 2 \times (1/m + \sqrt{\ln(d)/k})$ for \ref{fra:F1} and  \ref{fra:F2} since our theoretical results given in Theorem \ref{thm:grow_dim}  suggest the usage of a threshold proportional $d_m + \sqrt{\ln(d)/k}$ and we can show, in the i.i.d. settings (where $p = 1$) that $d_m = O(1/m)$ (see details in Section \ref{subsec:estimation_proofs}). For {Framework}  \ref{fra:F3}, we vary $\alpha$ around its theoretical optimal value, on a fine grid. The specific parameter setting we employ involves setting $p = 0.9$, which is further detailed below and illustrated in Figure \ref{fig:results_0.9p}.
 
\begin{figure}[!ht]
    \centering
    \begin{subfigure}[b]{1\textwidth}
        \includegraphics[width=1\linewidth, height = 0.25\textheight]{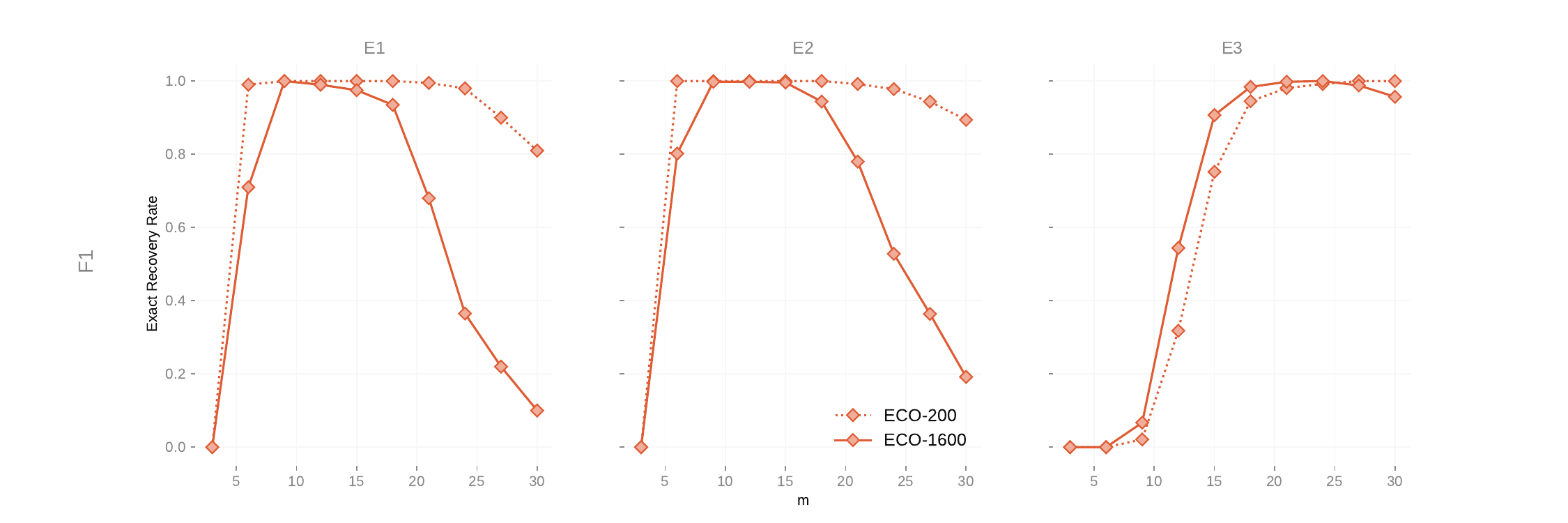}
    \end{subfigure}
    \begin{subfigure}[b]{1\textwidth}
        \includegraphics[width=1\linewidth, height = 0.25\textheight]{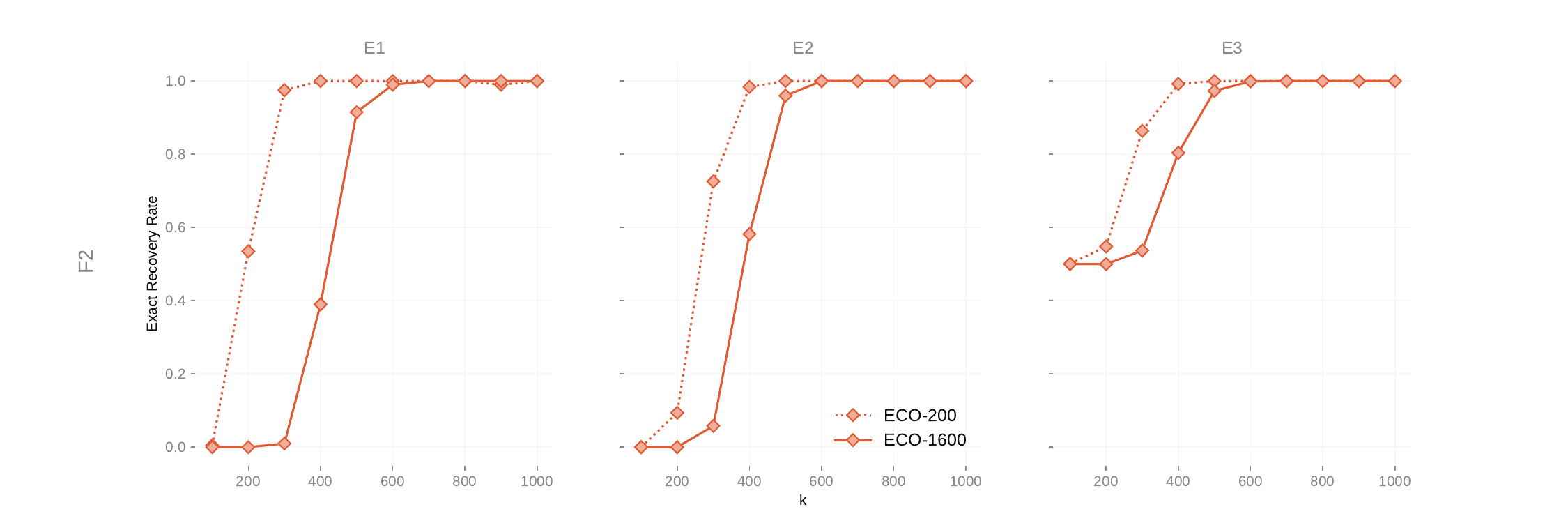}
    \end{subfigure}
    \begin{subfigure}[b]{1\textwidth}
        \includegraphics[width=1\linewidth, height = 0.25\textheight]{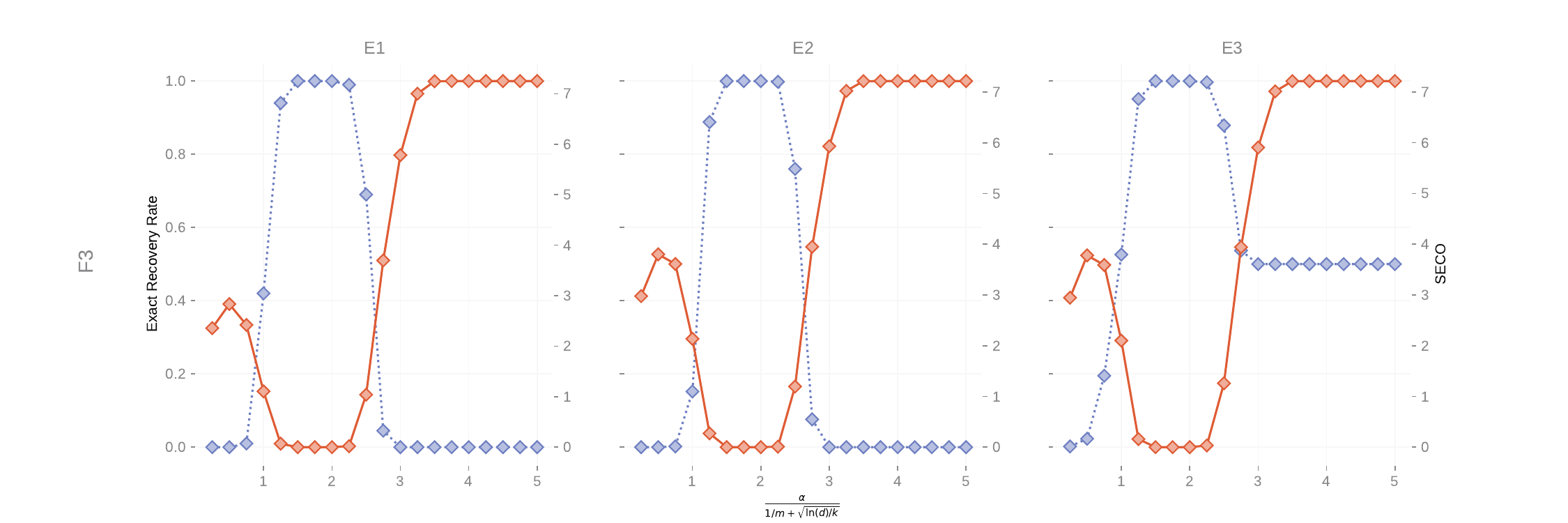}
    \end{subfigure}
    \caption{Simulation results with $p=0.9$. From top to bottom: Framework \ref{fra:F1}, Framework \ref{fra:F2}, Framework \ref{fra:F3}. From left to right: Experiment \ref{exp:E1}, Experiment \ref{exp:E2}, Experiment \ref{exp:E3}. Exact recovery rate for our algorithm (red, diamond points) for Frameworks \ref{fra:F1} and \ref{fra:F2} across 100 runs. Dotted lines correspond to $d = 200$, solid lines to $d = 1600$. The threshold $\tau$ is taken as $2 \times (1/m + \sqrt{\ln(d)/k})$.  For Framework 
 \ref{fra:F3}, average $\SECO$ losses (red solid lines, diamonds points) and exact recovery percentages (blue dotted lines, diamond points) across 100 simulations. For better illustration, the $\SECO$ losses are standardized first by  subtracting  the minimal $\SECO$ loss in each figure, and the standardized $\SECO$ losses plus 1 are then plotted on the logarithmic scale.}
    \label{fig:results_0.9p}
\end{figure}
\paragraph{Results.} Figure \ref{fig:results_0.9p} states all the results we obtain from each experiment and framework considered in this numerical section. We plot the exact recovery rate for Algorithm \ref{alg:rec_pratic} with dimensions $d=200$ and $d=1600$. Each experiment is performed using $p = 0.9$. As expected, the performance of our algorithm in Framework \ref{fra:F1} (see Figure \ref{fig:results_0.9p}, first row) is initially increasing in $m$, reaches a peak, and then decreases. This phenomenon depicts a trade-off between bias and the accuracy of inference. Indeed, a large block's length $m$ induces a lesser bias as we reach the domain of attraction. However, the number of blocks $k$ is consequently decreasing and implies a high variance for the inference process. These joint phenomena explain the parabolic form of the exact recovery rate for our algorithms for $d \in \{200,1600\}$. Considering the Framework \ref{fra:F2}  the performance of our algorithm is better as the number of block-maxima increases (see Figure \ref{fig:results_0.9p}, second row).

A classical pitfall for learning algorithms is high dimensional settings. Here, when the dimension increases from $200$ to $1600$, our algorithm consistently reports the maximal element $\bar{O}$ with a reasonable number of blocks.  This is in accordance with our theoretical findings, as the difficulty of clustering in AI-block models, as quantified by $\eta$ in Theorem \ref{thm:grow_dim}, scales at a rate of $\sqrt{\ln(d)k^{-1}}$. This rate has a moderate impact on the dimension $d$. In  Framework \ref{fra:F3}, the numerical studies in Figure \ref{fig:results_0.9p} (third row)  show that the optimal ranges of $\tau$ value, for high exact recovery percentages, are also associated with low average $\SECO$ losses. This supports our data-driven choice of $\tau$ provided  in Section~\ref{subsec:data_driv}.

\subsection{Comparison with competitors}

In this section, we examine the performance of approximate recovery of clusters of \ref{alg:rec_pratic} compared to DAMEX (\cite{goix2016sparse}), CLEF (\cite{chiapino2019identifying}), sKmeans (\cite{janssen2020k}), MUSCLE (\cite{meyer2023multivariate}) in terms of the Adjusted Rand Index (ARI). The ARI is a continuous metric ranging from -1 to 1 used to compare two partitions of a set. An ARI value of $1$ indicates identical partitions, while random partitions typically yield a value close to zero. Negative values occur for adversarial partitions, indicating that two elements that should be together fall into different groups more often than expected at random. The results in this section can be reproduced using the code made available at \href{https://github.com/Aleboul/ai_block_model}{https://github.com/Aleboul/ai\_block\_model}.

\paragraph{The setup} We consider the discrete-time $d$-variate moving maxima process $(\textbf{Y}_t, t \in \mathbb{Z})$ of order $p \in \mathbb{N}$ given by
\begin{equation}
	\label{eq:mm_process}
	Y_t^{(a)} = \bigvee_{\ell=0}^p \rho^\ell \epsilon_{t+ \ell}^{(a)}, (t \in \mathbb{Z}, \, a = 1,\dots,K), \quad \rho \in (0,1).
\end{equation}
Here $(\boldsymbol{\epsilon}_t, t\in \mathbb{Z})$ is an i.i.d. sequence of $K$-dimensional random vectors having a Clayton copula dependence function with parameter equal to unity and standard Pareto margins. Let us consider $(\textbf{Z}_t, t \in \mathbb{Z})$ as $\textbf{Z}_t = A \textbf{Y}_t + \textbf{E}_t$, where $A = (A_{ja})_{j=1,\dots,d, a = 1,\dots,K} \in [0,1]^{d \times K}$ be a coefficient matrix with rows sums to $\sum_{a=1}^K A_{ja} = 1$ for all $j = 1,\dots,d$ and $\textbf{E}_t$ serves as a vector of noise, independent of $\textbf{Y}_t$ with a tail that is lighter than $\textbf{Y}_t$, for any $t\in \mathbb{Z}$. Specifically, taking $\textbf{E}_t$ to be a multivariate Gaussian vector with the identity as its covariance matrix verifies this tail condition and is considered in this section. Then, the considered process $(\textbf{Z}_t, t \in \mathbb{Z})$ is in the max-domain of attraction of a max-linear model. The extreme directions of this model are the sets $J_a = \{ j \in \{1,\dots,d\}, A_{ja} > 0\}$, for $a = 1,\dots,K$. Moreover, this model can also be linked to AI-block models through the matrix $A$ by considering $\mathcal{L} = \{ L_1,\dots L_G\}$ a partition of $\{1,\dots,K\}$, then the clusters
\begin{equation*}
	O_g = \{ j \in \{1,\dots,d\}, \exists ! g \in \{1,\dots,G\}, A_{ja} \neq 0, a \in L_g \},
\end{equation*}
constitute an asymptotic independent partition of $\{1,\dots,d\}$, hence an AI-block model. Moreover, we specifically have in this setting $\bigcup_{a \in L_g} J_a = O_g$. This equation also supports a merging step for procedures that learn extreme directions to achieve clustering in AI-block models. In the experiments, we specifically merge two extreme directions    if they share a common variable. We design the extremal dependence using the matrix $A$ in two Experiments  {\ref{exp:E4} and \ref{exp:E5}}. In each of these experiments, we consider two different frameworks {\ref{fra:F4} and \ref{fra:F5}}. They are described below:
\begin{enumerate}[label = \textcolor{frenchblue}{E\arabic*}, leftmargin=*]
	\setcounter{enumi}{3}
	\item  \textit{Few large clusters}:  We set $K=100$, with $5$ clusters associated with groups of columns $L_g = \{20\times g+1,\dots, 20 \times (g+1)\}$ where $g=0,\dots,4$. These groups contain respectively $(6,5,4,3,2) \times C$ entities, where $C$ is a positive integer. \label{exp:E4}
	\item \textit{Many small clusters}: We set $K=100 \times C$, with $5 \times C$ clusters corresponding to the group of columns $L_g = \{20 \times k \times c+1,\dots,20 \times(k+1)\times c\}$ where $ k \in \{0,1,2,3,4\}$, $c \in \{1,\dots,C\}$, $g=(k+1)\times C$ so that $G$ equals $5 \times C$ with $C$ is a positive integer.\label{exp:E5}
\end{enumerate}
\begin{enumerate}[label = \textcolor{frenchblue}{F\arabic*}, leftmargin=*]
	\setcounter{enumi}{3}
	\item We consider a framework where \textit{Condition \ref{ass:A} holds}: rows of $A$, denoted as $A_{j \cdot}$, with $j \in O_g$, are sampled uniformly over the unit simplex $\mathbb{R}_+^{(L_g)}$. We investigate the performance of the algorithms with varying $d$ and $n$. We let $C$ range over $\{1,2,4,8,16,32\}$, resulting in $d \in \{20,40,80,160,320\}$ and using $n \in \{2000,3000,\dots,10000\}$. \label{fra:F4}
	\item In this scenario, we explore a framework where \textit{Condition \ref{ass:A} fails}. Let $s \in \{3,\dots,20\}$ represents the sparsity index. Then, for $j \in O_g$, the rows of the matrix $A$ are uniformly sampled from a random subset of $L_g$ of size $s$ over the unit simplex in $\mathbb{R}_+^s$. In this setup, we enforce clusters to be asymptotically dependent by ensuring that at least one association is shared between any pairs of variables, not necessarily the same association, so that Condition \ref{ass:A} fails. We let $C$ range over $\{1,2,4,8,16,32\}$, resulting in $d \in \{20,40,80,160,320\}$ and using $s \in \{3,4,\dots,20\}$ with a fixed $n=5000$.\label{fra:F5}
\end{enumerate}
We present and provide commentary on the results for specific values of $d$ and $n$; results for other values are available upon request.

\begin{figure}[ht] 
   \begin{subfigure}{0.32\textwidth}
       \includegraphics[width=\linewidth, height = 0.25\textheight]{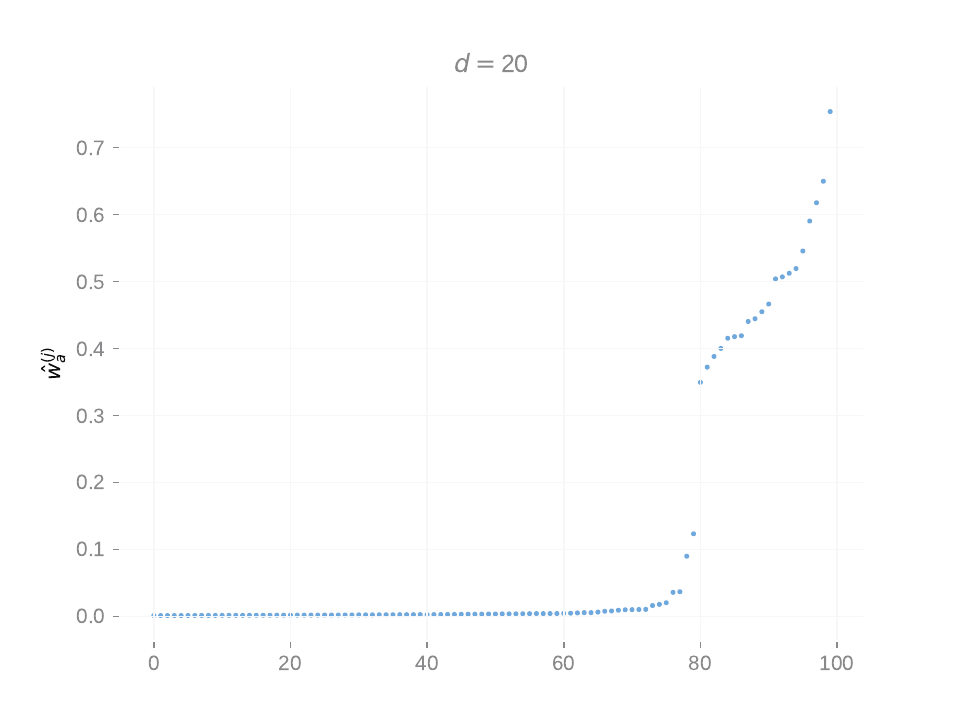}
       \caption{}
       \label{fig:diag_plot_sk1}
   \end{subfigure}
\hfill 
   \begin{subfigure}{0.32\textwidth}
       \includegraphics[width=\linewidth, height = 0.25\textheight]{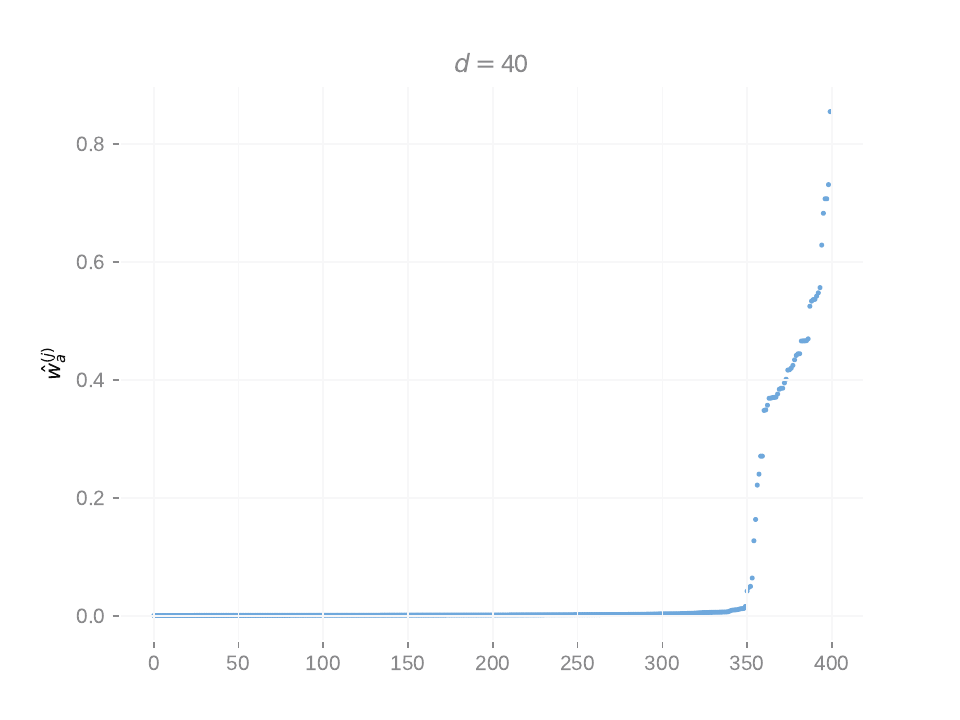}
       \caption{}
       \label{fig:diag_plot_sk2}
   \end{subfigure}
\hfill 
   \begin{subfigure}{0.32\textwidth}
       \includegraphics[width=\linewidth, height = 0.25\textheight]{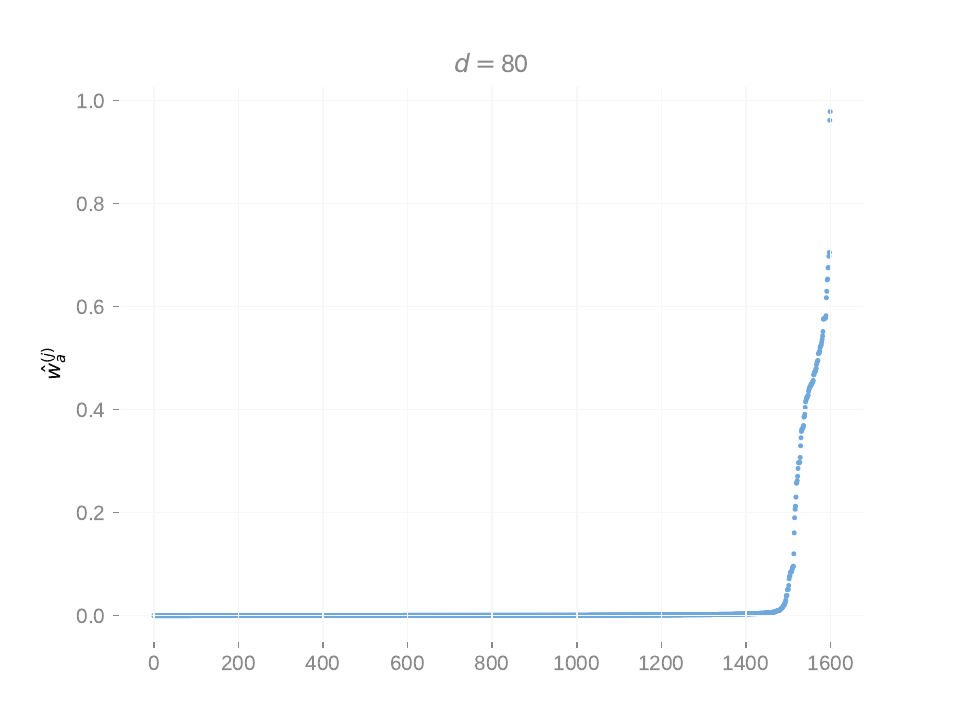}
       \caption{}
        \label{fig:diag_plot_sk3}
   \end{subfigure}
   \caption{Sorted centroïds $\hat{w}_a^{(j)}$ in Experiment \ref{exp:E5} with $j=1,\dots,d$ for $d \in \{20,40,80\}$, $a = 1,\dots,G$ with $G \in \{5,10,20\}$ and $n=10000$.}
   \label{fig:diag_plot_skmeans}
\end{figure} 

\paragraph{Calibrating parameters.} The tuning parameter $\tau$ of \ref{alg:rec_pratic} is selected by the data-driven approach described in Section \ref{subsec:data_driv} where the block size is taken to be $m=20$. In CLEF and DAMEX, the threshold was chosen by trial and error using the associated Adjusted Rand Index (ARI) with respect to the ground truth (which is unknown in practice) in the interval (0, 1). Thus, $\epsilon = 0.3$ and $\kappa = 0.2$ were selected for CLEF and DAMEX, respectively. The selected number of extremes is the one used by the authors, i.e., $k = \lfloor \sqrt{n} \rfloor$. The MUSCLE algorithm is fully adaptative and does not require specifying any parameters. We exclude the first extreme direction from the merging step because it is always associated with the trivial direction $\{1,\dots,d\}$, a phenomenon previously observed in \cite{meyer2023multivariate} (Appendix 2). 

Since sKmeans does not directly perform variable clustering, we gather the estimated centroids $\hat{\textbf{w}}_a \in \mathbb{R}^d$, $a = 1,\dots,G$. We then threshold them by $\tau$. Variables that remain positive represent groups of variables that are extremes together. Since this threshold parameter changes with the structure of A, several values of $\tau$ must be chosen. Specifically, $\tau$ was selected from $\{0.15,0.1,0.05,0.05,0.04,0.025,0.02\}$ for Experiment \ref{exp:E4} and set to $\tau = 0.15$ for Experiment \ref{exp:E5} where, for each, we set the true number of clusters (unknown in practice) to $G = 5$ in Experiment \ref{exp:E4} and $G = 5 \times C$ in Experiment \ref{exp:E5}.

Figure \ref{fig:diag_plot_skmeans} provides a diagnostic plot to set the threshold $\tau$ for the sorted estimated centroids $\hat{w}_a^{(j)}$  with $j=1,\dots,d$ in Experiment \ref{exp:E5}. In cases where $d=20$, the gap between components that are extreme together and those that are not is clear, but it narrows as the dimension increases.
\paragraph{Results and discussion} 
Figure \ref{fig:diag_F4} illustrates the numerical results on the approximate recovery of clusters using ARI in Framework \ref{fra:F4}, considering Experiments \ref{exp:E4} and \ref{exp:E5}. We were able to run the CLEF algorithm for small values of $d$ in Experiment \ref{exp:E4}, specifically for $d \in \{20,40\}$, before encountering memory limitations for larger dimensions. As sKmeans cannot be performed when there are fewer extreme observations than the desired number of clusters, some data are missing in Experiment \ref{exp:E5}.
\begin{figure}[ht] 
   \begin{subfigure}{0.48\textwidth}
       \includegraphics[width=\linewidth, height = 0.3\textheight]{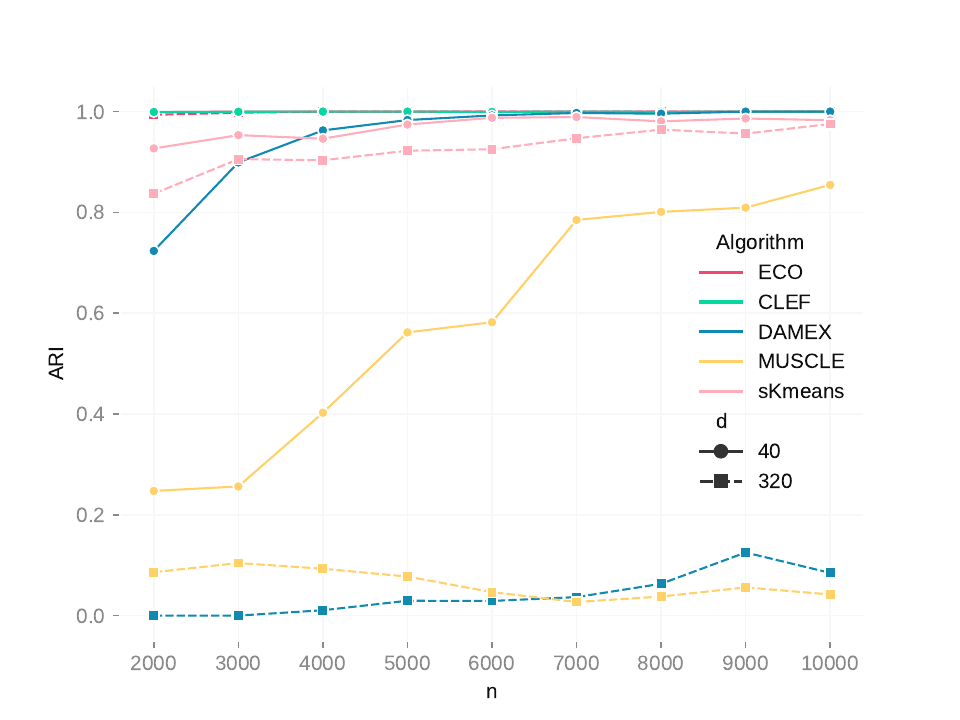}
       \caption{Experiment \ref{exp:E4} with Framework \ref{fra:F4}}
       \label{fig:diag_E4_F4}
   \end{subfigure}
\hfill 
   \begin{subfigure}{0.48\textwidth}
       \includegraphics[width=\linewidth, height = 0.3\textheight]{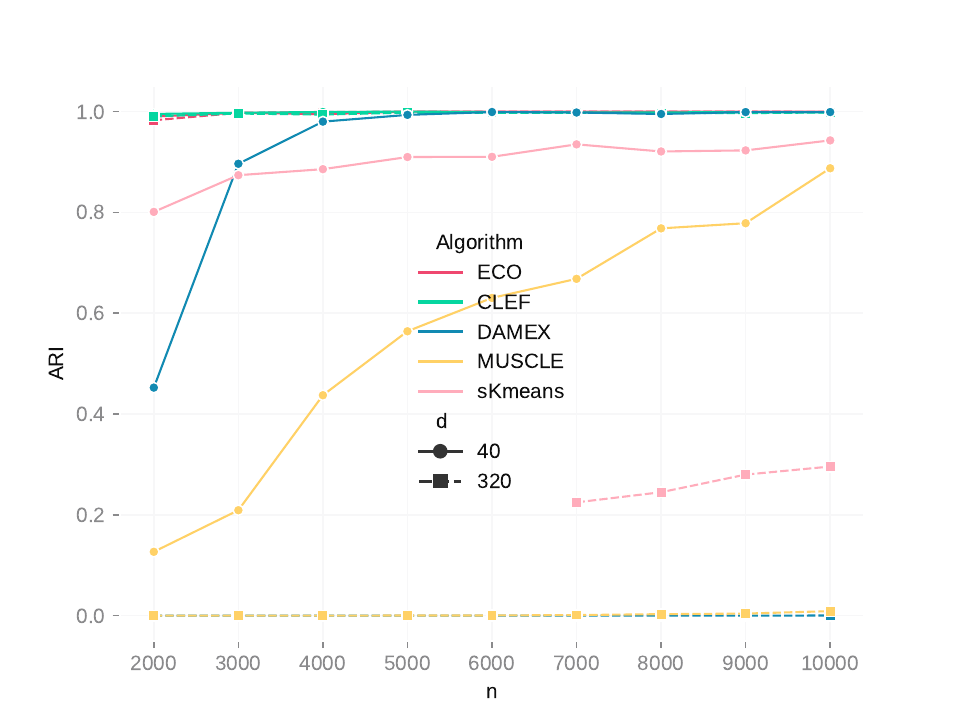}
       \caption{Experiment \ref{exp:E5} with Framework \ref{fra:F4}}
       \label{fig:diag_E5_F4}
   \end{subfigure}

   \caption{Panel \subref{fig:diag_E4_F4} (resp. Panel \subref{fig:diag_E5_F4}) depicts numerical results for Experiment \ref{exp:E4} (resp. \ref{exp:E5}) coupled with Framework \ref{fra:F4} for $n \in \{2000,3000,\dots,10000\}$ and $d\in \{40,320\}$.}
   \label{fig:diag_F4}
\end{figure}

All algorithms demonstrate an increase in performance as the number $n$ of observations increases. However, with increasing dimensionality, we observe decreasing performance for DAMEX, MUSCLE, and sKmeans, indicating difficulties in recovering   extreme directions in higher dimensions. As expected, Algorithm \ref{alg:rec_pratic} remains robust to the rise in dimensionality, even for smaller values of $n$. Since the CLEF algorithm constructs asymptotically dependent pairs, triplets, quadruplets, and so on, it is anticipated that in Experiment \ref{exp:E5} the procedure operates without memory limitations, given that the maximum cluster size is $6$. Figure \ref{fig:condition_b_fails} presents the numerical results on approximate recovery of clusters using Adjusted Rand Index (ARI) in Framework \ref{fra:F5}, considering both Experiments \ref{exp:E4} and \ref{exp:E5}. The selected threshold for sKmeans is directly linked to the structure of the matrix A. Due to the complexity of determining this threshold within this context, this procedure is excluded in Framework \ref{fra:F5}. Additionally, the CLEF algorithm requires a large amount of memory, and the procedure fails to run for a sparsity index greater than $11$ when $d = 160$ in Experiment \ref{exp:E4}, which explains missing  points in panel \subref{fig:diag_E4_F5} of Figure \ref{fig:condition_b_fails} .
\begin{figure}[ht] 
   \begin{subfigure}{0.48\textwidth}
       \includegraphics[width=\linewidth, height = 0.3\textheight]{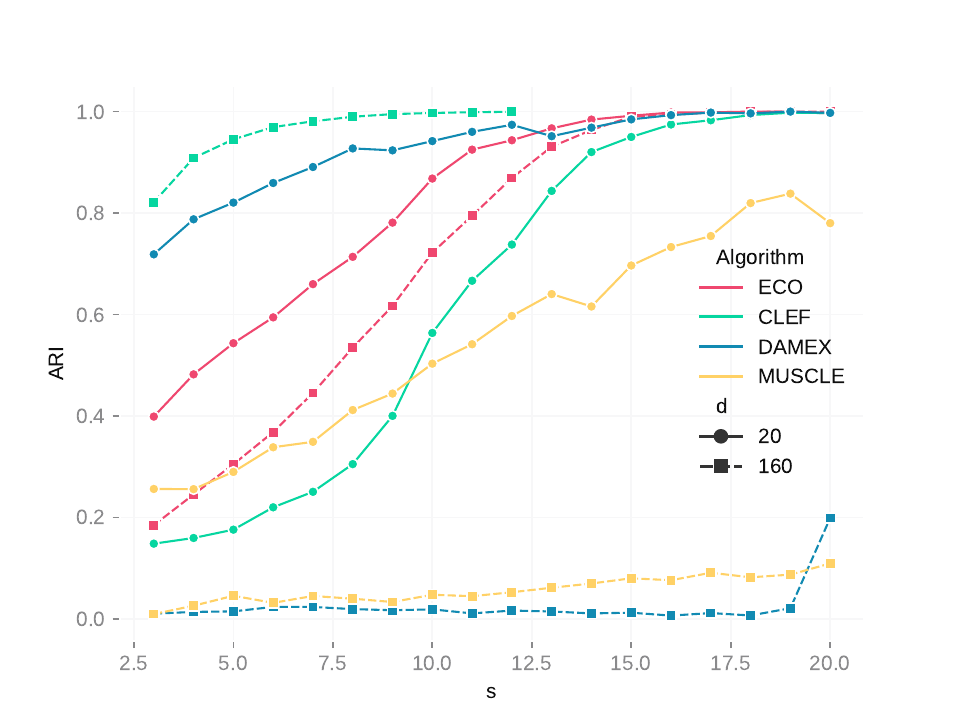}
       \caption{Experiment \ref{exp:E4} with Framework \ref{fra:F5}}
       \label{fig:diag_E4_F5}
   \end{subfigure}
\hfill 
   \begin{subfigure}{0.48\textwidth}
       \includegraphics[width=\linewidth, height = 0.3\textheight]{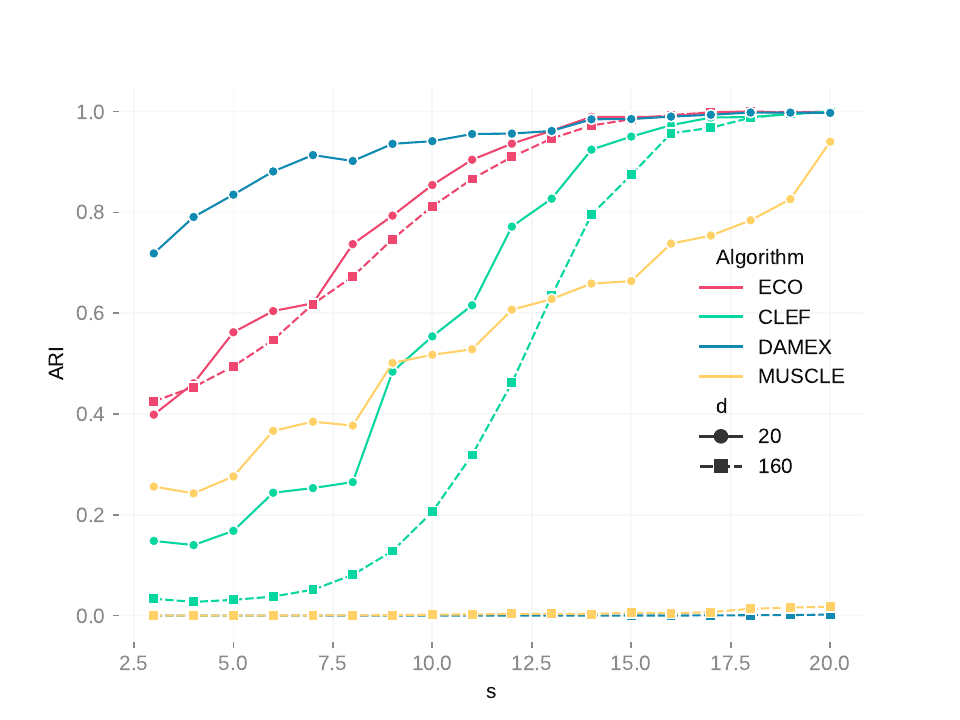}
       \caption{Experiment \ref{exp:E5} with Framework \ref{fra:F5}}
       \label{fig:diag_E5_F5}
   \end{subfigure}

   \caption{Panel \subref{fig:diag_E4_F5} (resp. Panel \subref{fig:diag_E5_F5}) depicts numerical results for Experiment \ref{exp:E4} (resp. \ref{exp:E5}) coupled with Framework \ref{fra:F5} for $n = 5000$ and $d\in \{20,160\}$, with the sparsity index $s \in \{3,4,\dots,20\}$.}
   \label{fig:condition_b_fails}
\end{figure}
As anticipated, our procedure demonstrates decreasing performance as the sparsity index decreases, given its heavy reliance on Condition \ref{ass:A}. When this conditions fails, our procedure recovers clusters that are too sparse. Surprisingly, other algorithms also exhibit sensitivity to the sparsity index $s$ and display a similar declining trend. Notably, the \ref{alg:rec_pratic} algorithm remains the most robust procedure to increasing dimensions in both experiments, while both DAMEX and MUSCLE show declining performance. We now provide a more nuanced discussion of the CLEF algorithm. In Experiment \ref{exp:E4}, both in Framework \ref{fra:F4} and \ref{fra:F5}, the CLEF algorithm demonstrates better performance in higher dimensions. This phenomenon can be explained by considering that one cluster may contain many variables that exhibit asymptotic dependence. Consequently, by construction, the CLEF algorithm is more likely to identify ``good candidates'' of pairs, triplets, quadruplets, and so on, that are indeed asymptotically dependent. Thus, the merging step we introduce to construct the cluster is more likely to yield the desired outcome. This explanation is coherent with Experiment \ref{exp:E4}, where clusters have a constant size. In this case, we observe that CLEF shows a decreasing performance in higher dimensions.

\section{Real-data applications}
\label{sec:application}

\subsection{Clustering brain extreme from EEG channel data}
\label{subsec:brain_extreme}

 Epilepsy, a significant neurological disorder, manifests as recurring unprovoked seizures. These seizures represent uncontrolled and abnormal electricity activity in the brain, posing a negative impact on one's quality of life and potentially triggering comorbid conditions like depression and anxiety. During a seizure episode, the patient may experience a loss of muscle control, which can result in accidents and injuries (see \cite{strzelczyk2023impact}).

 One essential tool used in the diagnosis of epilepsy is the electroencephalogram (EEGs). EEGs are utilized to measure the electrical activity of the brain by employing a uniform array of electrodes. Each EEG channel is formed by calculating the potential difference between two electrodes and captures the combined potential of millions of neurons. The EEG plays a crucial role in capturing the intricate brain activity, especially during epileptic seizures, and requires analysis using statistical models. Currently, most analysis methods rely on Gaussian models that focus on the central tendencies of the data distribution (see, for example, \cite{embleton2020multiscale,ombao2005slex}). However, a significant limitation of these approaches is their disregard for the fact that neuronal oscillations exhibit non-Gaussian probability distributions with heavy tails. To address this limitation, we employ AI-block models as a comprehensive framework to overcome the limitations of light-tailed Gaussian models and investigate the extreme neural behavior during an epileptic seizure.

 The dataset used to evaluate our method comprises of $916$ hours of continuous scalp EEG data sampled at a rate of $256$ Hz. This dataset were recorded from a total of $23$ pediatric patients at Children's Hospital Boston, see, e.g., \cite{shoeb2009application}. We focus the analysis on the Patient number $5$ which is the first patient where $40$ hours of continuous scalp EEG were sampled without interruption. Throughout the recordings, the patient experienced a total of five events that were identified as clinical seizures by medical professionals. The pediatric EEG data used in this paper is contained within the CHB-MIT database, which can be downloaded from: \href{https://physionet.org/content/chbmit/1.0.0/}{https://physionet.org/content/chbmit/1.0.0/}.

 For each non-seizure and seizure events, we follow the same specific processing pipeline. First, we calculate the block maxima, then calibrate the threshold using the $\SECO$ metric, as is suggested in Section \ref{subsec:data_driv}. Finally, we perform the clustering task (see Algorithm \ref{alg:rec_pratic}) using this adjusted threshold.

 In the case of non-seizure records, we compute the block maxima using a block duration of $4$ minutes. Figure \ref{fig:subim1} illustrates the relationship between the $\SECO$ and the threshold $\tau$. Two notable local minima are observed at $\tau = 0.24$ and $\tau = 0.4$. We execute the algorithm for both values and present the results for $\tau = 0.4$ since these results are better suited to AI-block models. Indeed, we obtain three clusters that demonstrate extreme dependence within the clusters while displaying weak extreme dependence in the block's off-diagonal (refer to Figure \ref{fig:subim2}). The spatial organisation of channel clusters is depicted in Figure \ref{fig:subim3}.

 Regarding seizure events, as the time series spans only $558$ seconds, we compute block maxima with a length of $5$ seconds. Considering the heavy-tailed nature of oscillations during a seizure, we believe that the limited length of the block used would not introduce a significant bias with respect to the domain of attraction. Figure \ref{fig:subim4} shows that the $\SECO$ is monotonically increasing. Thus, the optimal selected threshold is the lowest value (in this case, $\tau = 0.1$), which results in the minimal cluster $\{1,\dots,d\}$. This phenomenon is also reflected, in the extremal correlation matrix, where each channel exhibits strong pairwise extremal dependence with other channels. Consequently, the neurological disorder of the studied Patient $5$ manifests simultaneous extremes across all channels, indicating generalized seizures with inter-channel communication.

\begin{figure}[ht] 
   \begin{subfigure}{0.32\textwidth}
       \includegraphics[width=\linewidth]{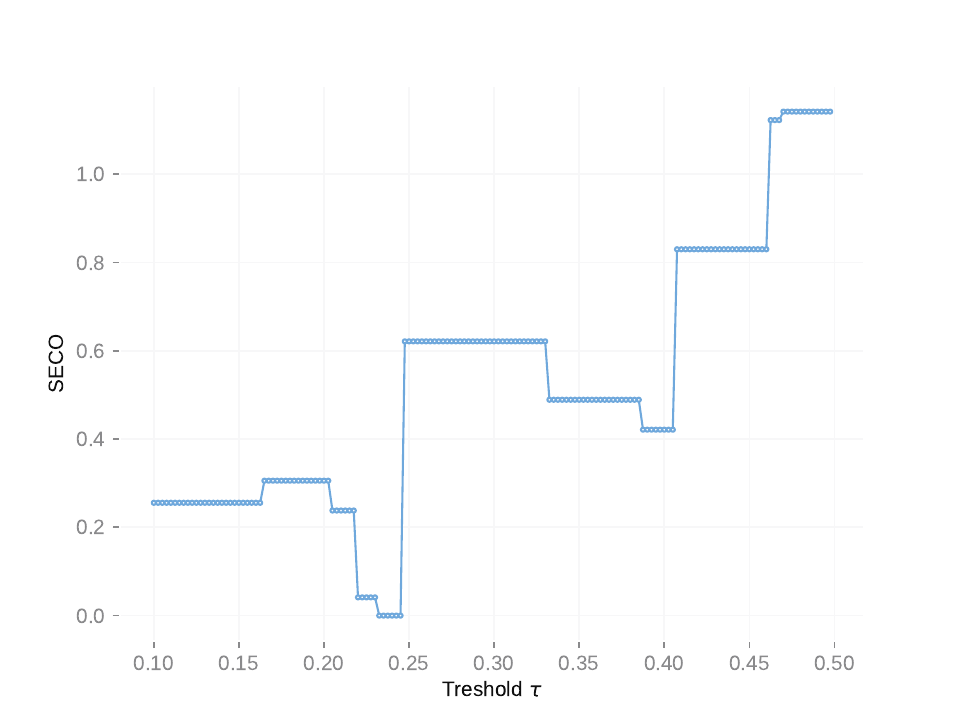}
       \caption{}
       \label{fig:subim1}
   \end{subfigure}
\hfill 
   \begin{subfigure}{0.32\textwidth}
       \includegraphics[width=\linewidth]{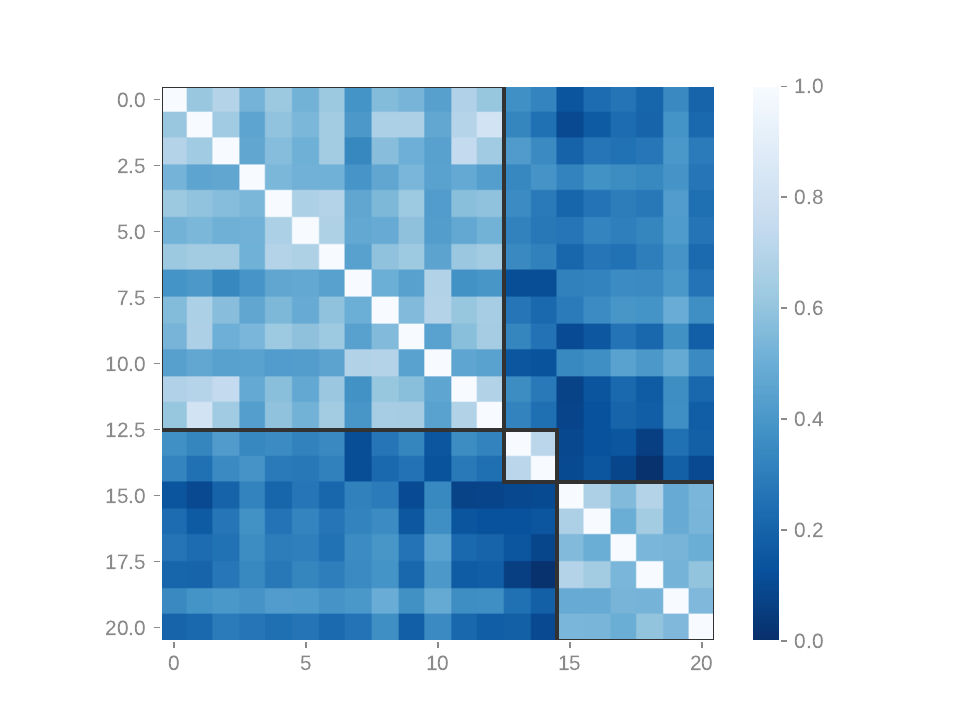}
       \caption{}
       \label{fig:subim2}
   \end{subfigure}
\hfill 
   \begin{subfigure}{0.32\textwidth}
       \includegraphics[width=\linewidth]{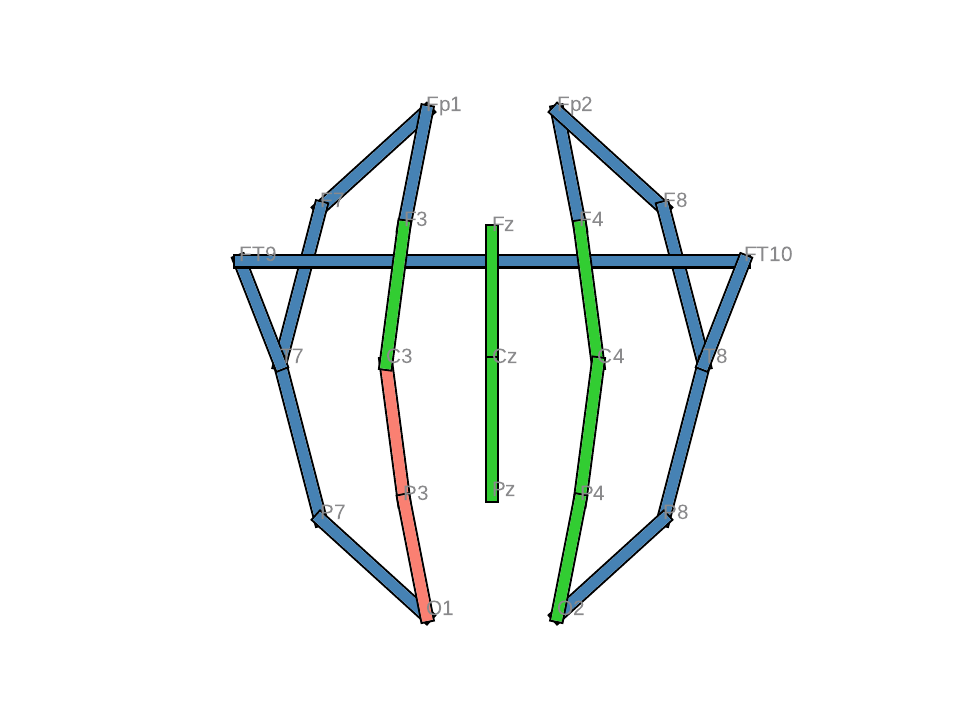}
       \caption{}
       \label{fig:subim3}
   \end{subfigure}

   \begin{subfigure}{0.32\textwidth}
       \includegraphics[width=\linewidth]{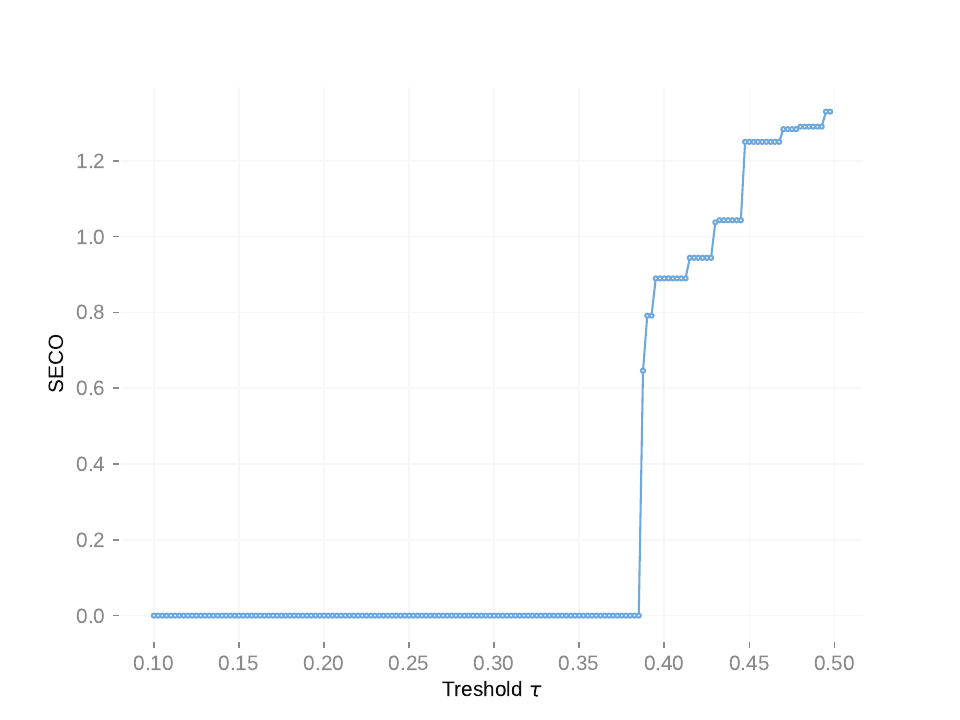}
       \caption{}
       \label{fig:subim4}
   \end{subfigure}
\hfill 
   \begin{subfigure}{0.32\textwidth}
       \includegraphics[width=\linewidth]{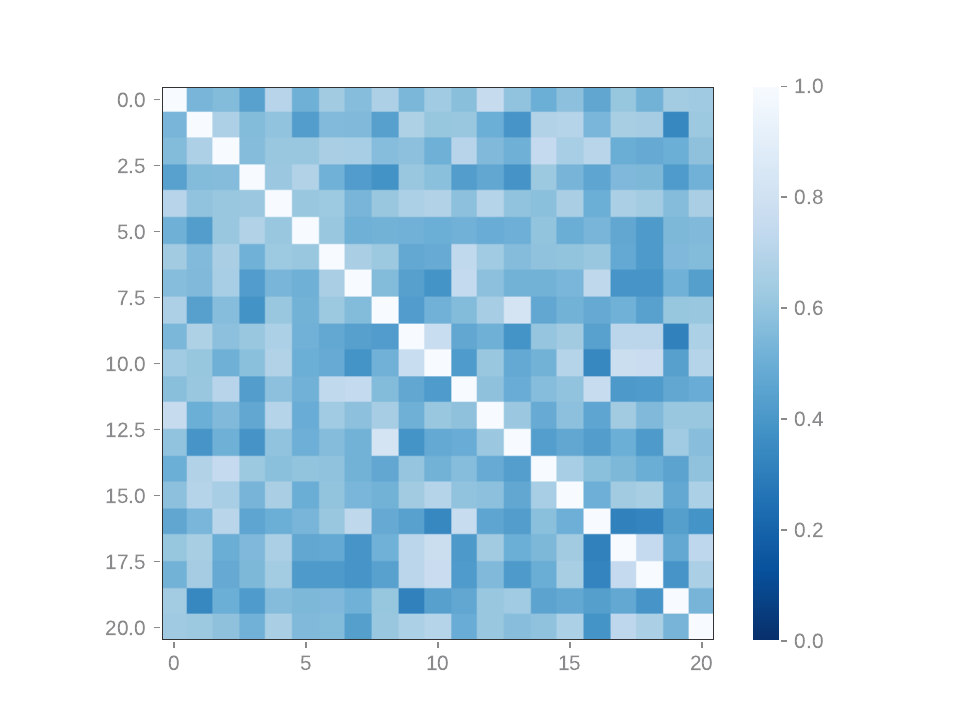}
       \caption{}
       \label{fig:subim5}
   \end{subfigure}
\hfill 
   \begin{subfigure}{0.32\textwidth}
       \includegraphics[width=\linewidth]{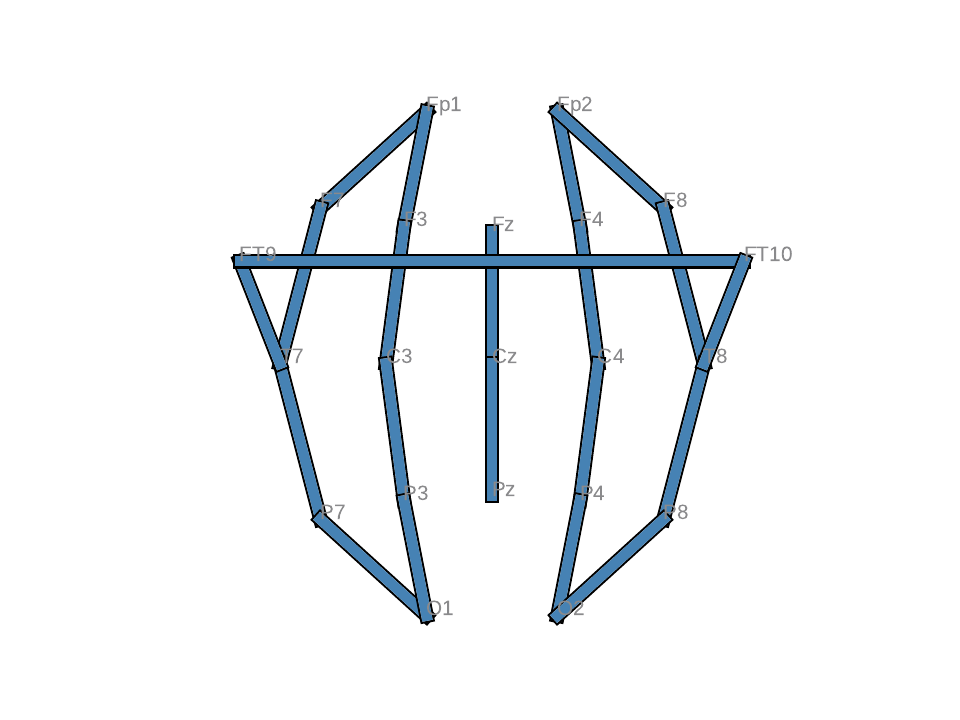}
       \caption{}
       \label{fig:subim6}
   \end{subfigure}

   \caption{{Clustering analysis on extreme brain activity derived from EEG channel data. The results are presented in the first and second rows, representing non-seizure and seizure events, respectively. The first column illustrates the behavior of the SECO metric as it relates to the threshold level, $\tau$. The second column showcases the resulting clustering performed on the extremal correlation matrix using the optimal value of $\tau$. Finally, the third column provides a spatial organisation of the clustered channels.}}
   \label{fig:image2}
\end{figure}

\subsection{Extremes on river network}

 {To demonstrate the novel regionalization method described in this paper, we employed biweekly maximum river discharge data, specifically, records collected over 14-day intervals, measured in ($m^3 /s$). This dataset were sourced from a network of 1123 gauging stations strategically positioned across European rivers. The European Flood Awareness System (EFAS) provided these data, and they are accessible free of charge via the following website \href{https://cds.climate.copernicus.eu/}{https://cds.climate.copernicus.eu/}. EFAS primarily relies on a distributed hydrological model that operates on a grid-based system, focusing on extreme river basins. The model integrates various medium-range weather forecasts, including comprehensive sets from the Ensemble Prediction System (EPS). The dataset was generated by inputting gridded observational precipitation data, with a resolution of $5 \times 5$ km, into the LISFLOOD hydrological model across the EFAS domain. The temporal resolution utilized was a $24$-hour time step, covering a span over $50$ years.}

 {For the calibration of the LISFLOOD within the EFAS framework, a total of $1137$ stations from $215$ different catchments across the Pan-European EFAS domain were used. From this list of stations with available coordinates, we extracted time-series data from the nearest cell where EFAS data were accessible. However, in this pre-processing step, stations from Albania had to be excluded as the extracted time series were identical for those stations. Additionally, calibration stations from Iceland and Israel were removed since they were located far outside the domain. As a result, we were left with $1123$ gauging stations, covering $10898$ observed days of river discharge between $1991$ and $2020$. The biweekly block maxima approach yielded $783$ observations.}

 Following the pipeline described in Section \ref{subsec:brain_extreme}, in Figure \ref{fig:river1}, the $\SECO$ is depicted as it evolves in relation to the threshold $\tau$. The minimum value is attained at $\tau = 0.25$. Using this data-driven threshold, the Algorithm \ref{alg:rec_pratic} is applied, resulting in $17$ clusters, with $11$ clusters comprising fewer than $20$ stations. Figure \ref{fig:river2} presents the resulting extremal correlation matrix, with clusters visually highlighted by squares. Within the clusters, there is evidence of asymptotic dependence, while moderate asymptotic dependence is observed in the off block-diagonal. Figure \ref{fig:river3} provides a spatial representation of three main clusters. Notably, the clusters exhibit spatial concentration, despite the algorithm being unaware of their spatial dispersion. Overall, distinct clusters representing western, central, and northern Europe can be identified. It is crucial to emphasize that the northern Europe cluster includes stations situated in the Alps and the Pyrenees, which are geographically distant from the Scandinavian peninsula. Despite the geographical separation, these regions share mountainous terrain, and the simultaneous occurrence of extreme river discharges may be attributed to snow melting.

\begin{figure}[th]
    \begin{subfigure}[b]{0.32\textwidth}
      \begin{subfigure}[t]{\textwidth}
        \includegraphics[width=\textwidth]{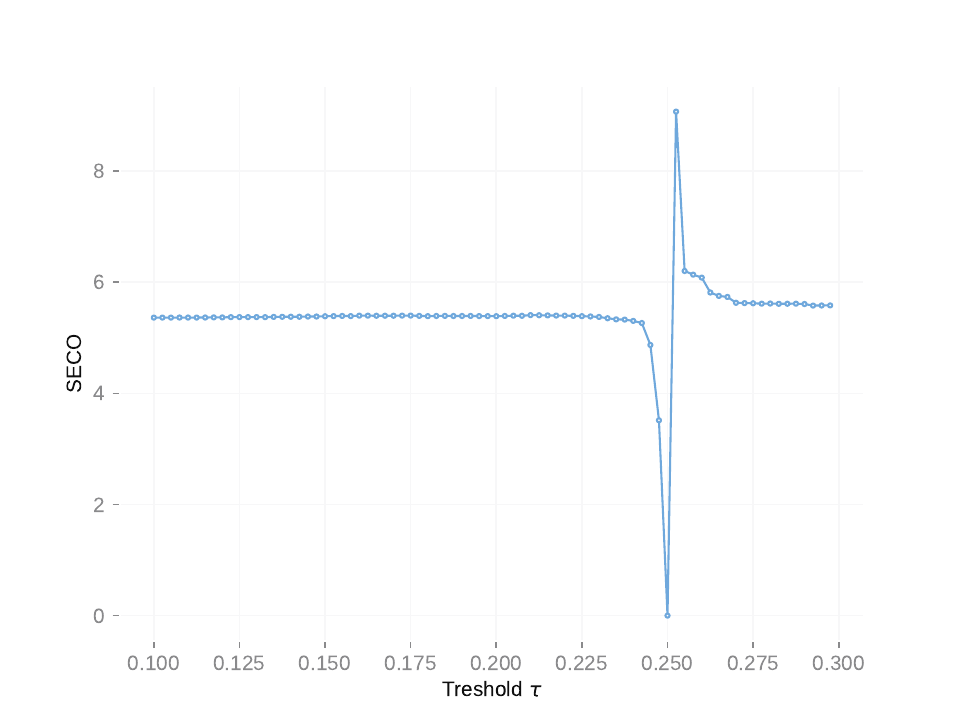}
        \caption{}
        \label{fig:river1}
      \end{subfigure}
      \begin{subfigure}[b]{\textwidth}
        \includegraphics[width=\textwidth]{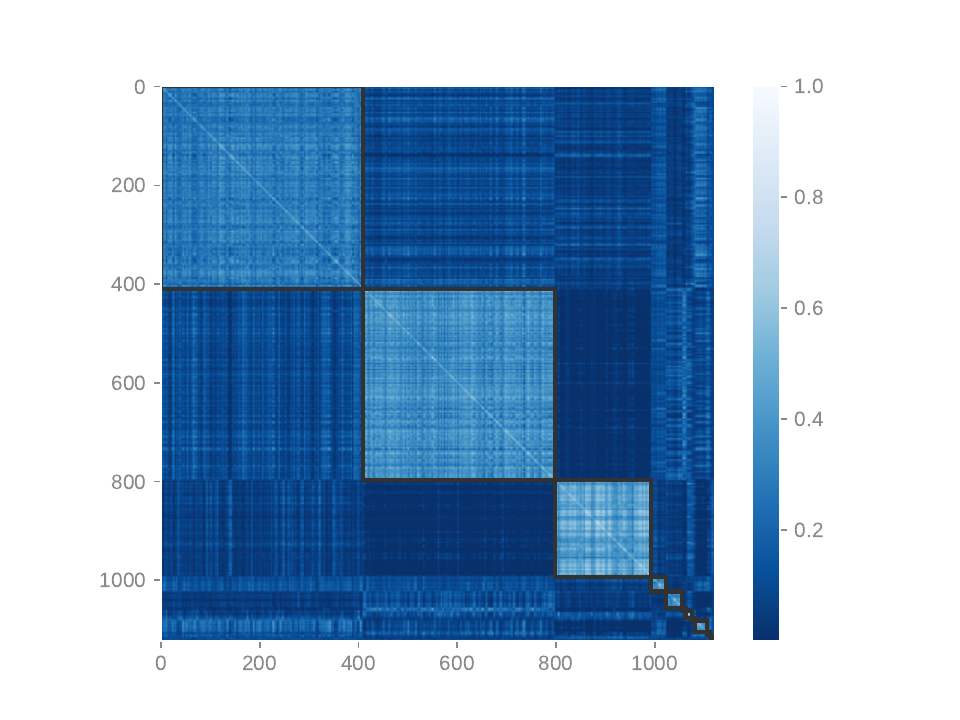}
        \caption{}
        \label{fig:river2}
      \end{subfigure}
    \end{subfigure}
    \hfill  
    \begin{subfigure}[b]{0.65\textwidth}
      \includegraphics[width=\textwidth]{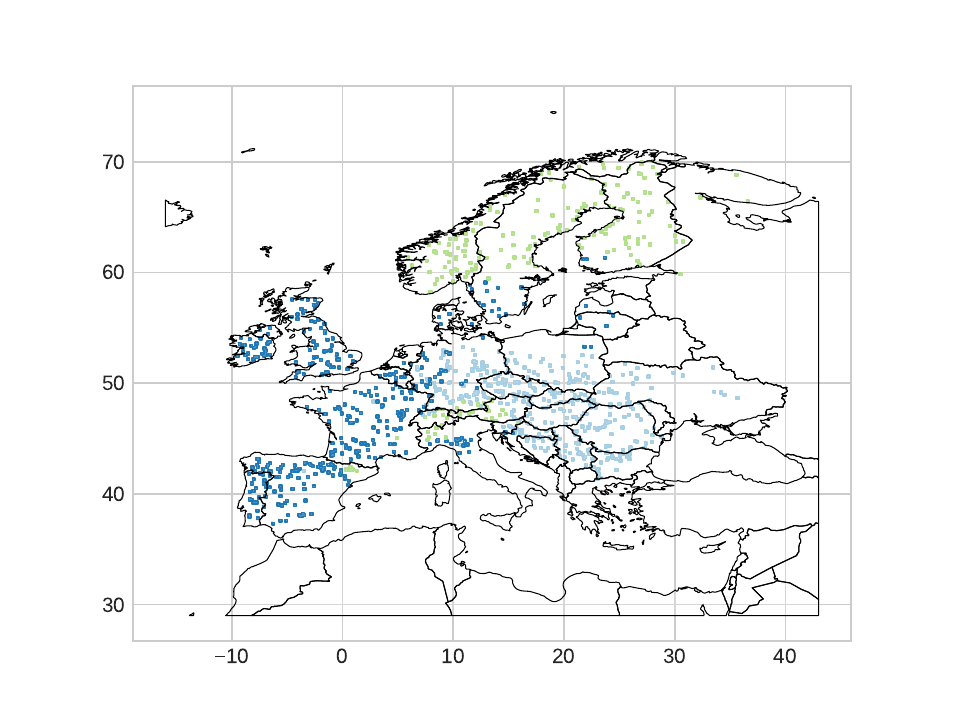}
      \caption{}
      \label{fig:river3}
    \end{subfigure}

    \caption{{Clustering analysis on extreme river discharges on EFAS data. The first panel illustrates the behavior of the SECO metric as it relates to the threshold level, $\tau$. The second panel showcases the resulting clustering performed on the extremal correlation matrix using the optimal value of $\tau$. Finally, the third one provides a spatial representation of the clustered stations.}}
  \end{figure}

\section{Conclusions}
\label{sec:conclusion}
Our main focus in this work was to develop and analyze an algorithm for recovering clusters in AI-block models, and to understand how the dependence structure of maxima impacts the difficulty of clustering in these models. This is particularly challenging when we are dealing with high-dimensional data and weakly dependent observations that are sub-asymptotically distributed. In order to better understand these phenomena, we ask stronger assumptions about the extremal dependence structure in our theoretical analysis. Specifically, we  assume the asymptotic independence between blocks, which is the central assumption of AI-block models. This assumption enables us to examine the impact of the dependence structure and develop an efficient algorithm for recovering clusters in AI-block models. By employing this procedure, we can recover the clusters with high probability by employing a threshold that scales logarithmically with the dimension $d$. However, it remains important to explore the optimal achievable rate for recovering AI-block models.

In this paper, we find a bound for the minimal extremal correlation separation $\eta > 0$.  A further goal is to find the minimum value $\eta^*$ below which it is impossible, with high probability, to exactly recover $\bar{O}$ by any method. This question can be formally expressed using Le Cam's theory as follows:
\begin{equation*}
    \underset{\hat{O}}{\inf} \underset{\mathcal{X} \in \mathbb{X}(\eta)}{\sup} \mathbb{P}_{\mathcal{X}}(\hat{O} \neq \bar{O}) \geq \mathrm{constant} > 0, \quad \forall \,  \eta < \eta^*,  
\end{equation*}
with $\mathbb{X}(\eta) = \left\{ \mathcal{X}, \MECO(\mathcal{X}) > \eta \right\}$ and the infimum is taken over all possible estimators. One possible direction to obtain such a result is to follow methods introduced by \cite{drees2001minimax} for risk bounds of extreme value index. An interesting consequence of this result is to determine whether our procedure is optimal (in a minimax sense), i.e., whether the order of $\eta^*$ and the one found in Theorem \ref{thm:grow_dim} are the same.

\printbibliography

\appendix

\section{Details on mixing coefficients}
\label{sec:details}
Consider $\textbf{Z}_{t} = (Z_t^{(1)},\dots,Z_t^{(d)})$, where $t \in \mathbb{Z}$ be a a strictly stationary multivariate random process. Let
\begin{equation*}
	\mathcal{F}_k = \sigma(\textbf{Z}_t, t \leq k ), \quad \textrm {  and  } \quad \mathcal{G}_k = \sigma(\textbf{Z}_t, t \geq k),
\end{equation*}
be respectively the natural filtration and ``reverse'' filtration of $(\textbf{Z}_t, t \in \mathbb{Z})$. Many types of mixing conditions exist in the literature. The weakest among those most commonly used is called strong or $\alpha$-mixing. Specifically, for two $\sigma$-fields $\mathcal{A}_1$ and $\mathcal{A}_2$ of a probability space $(\Omega, \mathcal{A}, \mathbb{P})$ the $\alpha$-mixing coefficient of a multivariate random process is defined for $\ell \geq 1$
\begin{equation}
	\label{eq:alpha_mixing}
	\alpha(\ell) = \underset{t \in \mathbb{Z}}{\sup} \; \alpha\left(\mathcal{F}_t, \mathcal{G}_{t+\ell}\right),
\end{equation}
where
\begin{equation*}
	\alpha\left(\mathcal{A}_1,\mathcal{A}_2\right) = \underset{A_1 \in \mathcal{A}_1, A_2 \in \mathcal{A}_2}{\sup} \left| \mathbb{P}(A_1 \cap A_2) - \mathbb{P}(A_1)\mathbb{P}(A_2) \right|.
\end{equation*}
For any process $(\textbf{Z}_t, t\in \mathbb{Z})$, let
\begin{equation*}
	\beta(\mathcal{A}_1, \mathcal{A}_2) = \sup \frac{1}{2} \sum_{i,j \in I \times J} \left| \mathbb{P}(A_i \cap B_j) - \mathbb{P}(A_i) \mathbb{P}(B_j) \right|,
\end{equation*}
where the sup is taken over all finite partitions $(A_i)_{i \in I}$ and $(B_j)_{j \in J}$ of $\Omega$ with the sets $A_i$ in $\mathcal{A}_1$ and the sets $B_j$ in $\mathcal{A}_2$. The $\beta$-mixing (or completely regular) coefficient is defined as follows
\begin{equation}
	\label{eq:beta_mixing}
	\beta(\ell) = \underset{t \in \mathbb{Z}}{\sup} \, \beta(\mathcal{F}_t, \mathcal{G}_{t+\ell}).
\end{equation}
By considering 
\begin{equation*}
	\varphi(\mathcal{A}_1, \mathcal{A}_2) = \underset{A_1, A_2 \in \mathcal{A}_1 \times \mathcal{A}_2, \mathbb{P}(A_1) \neq 0}{\sup} \left| \mathbb{P}(A_2|A_1) - \mathbb{P}(A_1) \right|,
\end{equation*}
the $\varphi$-mixing coefficient is defined by
\begin{equation}
    \label{eq:varphi_mixing}
    \varphi(\ell) = \underset{t \in \mathbb{Z}}{\sup} \, \varphi(\mathcal{F}_t, \mathcal{G}_{t + \ell})
\end{equation}
It should be noted that if the original process $(\textbf{Z}_t, t\in \mathbb{Z})$ satisfies an $\alpha$- or $\beta$- or $\varphi$-mixing condition, then the stationary process $(f(\textbf{Z}_t), t\in \mathbb{Z})$ for a measurable function $f$ also satisfies the same mixing condition. The $\alpha$-mixing rate, $\beta$-mixing rate, and $\varphi$-mixing rate of the stationary process are all bounded by the corresponding rate of the original process. In terms of their order, the three mixing coefficients are related as follows:
\begin{equation}
\label{eq:order_mixing_coeff}
\alpha(\ell) \leq \beta(\ell) \leq \varphi(\ell).
\end{equation}
This means that the $\alpha$-mixing coefficient is the weakest, followed by the $\beta$-mixing coefficient, and finally the $\varphi$-mixing coefficient is the strongest.

\section{Proofs of main results}
In the subsequent section of our materials, we employ the notation $(\boldsymbol{1},\textbf{x}^{(B)}, \boldsymbol{1})$ having its $j$th component equal to $x^{(j)} \mathds{1}_{\{j \in B\}} + \mathds{1}_{\{ j \notin B\}}$. In a similar way, we note $(\textbf{0}, \textbf{x}^{(B)}, \textbf{0})$ the vector in $\mathbb{R}^d$ which equals $x^{(j)}$ if $j \in B$ and $ 0 $ otherwise. 
\label{sec:main_results}

In the subsequent section of our materials, we employ the notation $(\boldsymbol{1},\textbf{x}^{(B)}, \boldsymbol{1})$ having its $j$th component equal to $x^{(j)} \mathds{1}_{\{j \in B\}} + \mathds{1}_{\{ j \notin B\}}$. In a similar way, we note $(\textbf{0}, \textbf{x}^{(B)}, \textbf{0})$ the vector in $\mathbb{R}^d$ which equals $x^{(j)}$ if $j \in B$ and $ 0 $ otherwise. 
\subsection{Proofs of Section \ref{sec:variable_clust}}
\label{subsec:variable_clust_proofs}

In Proposition \ref{prop:Cevt}, we prove that the function introduced in Section \ref{subsec:ai_block_models} is an extreme value copula. We do this by showing that its margins are distributed uniformly on the unit interval [0,1] and that it is max-stable, which is a defining characteristic of extreme value copulae.

\begin{proof}[Proof of Proposition \ref{prop:Cevt}]
	We first show that $C_\infty$ is a copula function. It is clear that $C_\infty(\textbf{u}) \in [0,1]$ for every $\textbf{u} \in [0,1]^d$. We check that its univariate margins are uniformly distributed on $[0,1]$. Without loss of generality, take $u^{(i_{1,1})} \in [0,1]$ and let us compute 
	\begin{equation*}
		C_\infty(1, \dots, u^{(i_{1,1})}, \dots, 1) = C_\infty^{(O_1)}(u^{(i_{1,1})}, 1, \dots, 1) = u^{(i_{1,1})}.
	\end{equation*}
	So $C_\infty$ is a copula function. We now have to prove that $C_\infty$ is an extreme value copula. We recall that $C_\infty$ is an extreme value copula if and only if $C_\infty$ is max-stable, that is for every $m \geq 1$
	\begin{equation*}
		C_\infty(u^{(1)},\dots, u^{(d)}) = C_\infty(\{u^{(1)}\}^{1/m}, \dots, \{u^{(d)}\}^{1/m})^m.
	\end{equation*}
	By definition, we have
	\begin{equation*}
		C_\infty(\{u^{(1)}\}^{1/m}, \dots, \{u^{(d)}\}^{1/m})^m = \Pi_{g=1}^G  \left\{ C_\infty^{(O_g)}\left(\{u^{(i_{g,1})}\}^{1/m}, \dots, \{u^{(i_{g,d_g})}\}^{1/m}\right) \right\}^m.
	\end{equation*}
	Using that $C_\infty^{(O_1)}, \dots, C_\infty^{(O_G)}$ are extreme value copulae, thus max stable, we obtain
	\begin{equation*}
		C_\infty(\{u^{(1)}\}^{1/m}, \dots, \{u^{(d)}\}^{1/m})^m = \Pi_{g=1}^G C_\infty^{(O_g)}\left(u^{(i_{g,1})}, \dots, u^{(i_{g,d_g})}\right) = C_\infty(u^{(1)}, \dots, u^{(d)}).
	\end{equation*}
	Thus $C_\infty$ is an extreme value copula. Finally, we prove that $C_\infty$ is the copula of the random vector $\textbf{X} = (\textbf{X}^{(O_1)}, \dots, \textbf{X}^{(O_G)})$, that is
	\begin{equation*}
		\mathbb{P} \left\{ \textbf{X} \leq \textbf{x} \right\} = C_\infty(H^{(1)}(x^{(1)}), \dots, H^{(d)}(x^{(d)})), \quad \textbf{x} \in \mathbb{R}^d.
	\end{equation*}
	Using mutual independence between random vectors, we have
	\begin{align*}
		\mathbb{P}\left\{ \textbf{X} \leq \textbf{x} \right\} &= \Pi_{g=1}^G \mathbb{P}\left\{ X^{(i_{g,1})} \leq x^{(i_{g,1})}, \dots, X^{(i_{g,d_g})} \leq x^{(i_{g,d_g})} \right\} \\ &= \Pi_{g=1}^G C_\infty^{(O_g)}\left(H^{(i_{g,1})}(x^{(i_{g,1})}), \dots, H^{(i_{g,d_g})}(x^{(i_{g,d_g})}) \right) \\ &= C_\infty(H^{(1)}(x^{(1)}), \dots, H^{(d)}(x^{(d)})).
	\end{align*}
	Hence the result.
\end{proof} 

Theorem \ref{thm:unicity}, proved below, establishes several fundamental properties of the set $\mathcal{O}((\textbf{Z}_t, t \in \mathbb{Z}))$, including the fact that subpartitions of an element $O \in \mathcal{O}((\textbf{Z}_t, t \in \mathbb{Z}))$ also belong to $\mathcal{O}((\textbf{Z}_t, t \in \mathbb{Z}))$ (item \ref{thm_(i)}), the ordering of partitions and their intersections (item \ref{thm_(ii)}) and the stability of the intersection of two elements $O, S \in \mathcal{O}((\textbf{Z}_t, t \in \mathbb{Z}))$ (item \ref{thm_(iii)}). Using these results, the theorem also provides an explicit construction of the unique maximal element $\bar{O}$ of $\mathcal{O}((\textbf{Z}_t, t \in \mathbb{Z}))$ (see item \ref{thm_(iv)}).
\begin{proof}[Proof of Theorem \ref{thm:unicity}]
    For \ref{thm_(i)}, if $\mathcal{L}((\textbf{Z}_t, t\in \mathbb{Z})) \sim S$, then there exist a random vector $\textbf{X}$ with extreme value distribution $H$ such that $\mathcal{L}((\textbf{Z}_t, t\in \mathbb{Z})) \in D(H)$ and a partition $S=\{S_1, \dots,S_G\}$ of $\{1,\dots,d\}$ which induces  mutually independent random vectors $\textbf{X}^{(S_1)}, \dots, \textbf{X}^{(S_G)}$. As $S$ is a sub-partition of $O$, it also generates a partition where vectors are mutually independent.
    
    Now let us prove \ref{thm_(ii)}, take $g \in \{1,\dots,G\}$ and $a,b \in (O \cap S)_g$, in particular $a \overset{O}{\sim} b$, thus there exists $g' \in \{1,\dots, G'\}$ such that $a,b \in O_{g'}$. The following inclusion $(O \cap S)_g \subseteq O_{g'}$ is hence obtained and the second statement follows.
    
    The third result \ref{thm_(iii)} comes down from  the definition for the direct sense and by \ref{thm_(i)} and \ref{thm_(ii)} for the reverse one. We now go to the last item of the theorem, i.e. item \ref{thm_(iv)}. The set $\mathcal{O}((\textbf{Z}_t, t \in \mathbb{Z}))$ is non-empty since the trivial partition $O = \{1,\dots,d\}$ belongs to $\mathcal{O}((\textbf{Z}_t, t \in \mathbb{Z}))$. It is also a finite set, and we can enumerate it $\mathcal{O}((\textbf{Z}_t, t \in \mathbb{Z})) = \{O_1, \dots, O_M\}$. Define the sequence $O'_1, \dots, O'_M$ recursively according to
    \begin{itemize}
    	\item $O'_1 = O_1$,
    	\item $O'_g = O_g \cap O'_{g-1}$ for $g = 2,\dots, M$.
    \end{itemize}
    According to \ref{thm_(iii)}, we have that by induction $O'_1, \dots O'_M \in \mathcal{O}((\textbf{Z}_t, t \in \mathbb{Z}))$. In addition, we have both $O'_{g-1} \leq O'_g$ and $O_g \leq O'_g$, so by induction $O_1, \dots, O_g \leq O'_g$. Hence the partition $\bar{O} := O'_M = O_1 \cap \dots  \cap O_{M-1}$ is the maximum of $\mathcal{O}((\textbf{Z}_t, t \in \mathbb{Z}))$.
\end{proof}
\begin{remark}
    The examination of the proof of Theorem \ref{thm:unicity} reveals that many arguments may also apply to the scenario of mutually independent random vectors.
\end{remark}

\subsection{Proofs of Section \ref{sec:estimation}}
\label{subsec:estimation_proofs}

Denote by $C_{n,m}^o$ the empirical estimator of the copula $C_m$ based on the (unobservable) sample $(U_{m,1}^{(j)},\dots,U_{m,k}^{(j)})$ for $j \in \{1,\dots,d\}$. In Proposition \ref{prop:concentration_inequality} we state a concentration inequality for the madogram estimator. This inequality is obtained through two main steps, that are using classical concentration inequalities, such as Hoeffding and McDiarmid inequalites and chaining arguments in our specific framework of multivariate mixing random process. In the following, $C_1$, $C_2$ and $C_3$ denote universal constants whose values could change from line to line of the proof.

\begin{proof}[Proof of Proposition \ref{prop:concentration_inequality}]
Let us define the following quantity
  \begin{equation}
  	\label{eq:mado_oracle}
		\hat{\nu}_{n,m}^o = \frac{1}{k} \sum_{i=1}^k \left[ \bigvee_{j=1}^d U_{m,i}^{(j)} - \frac{1}{d} \sum_{j=1}^d U_{m,i}^{(j)} \right],
	\end{equation}
	that is the madogram estimated through the sample $\textbf{U}_{m,1},\dots, \textbf{U}_{m,k}$. Then, the following bound is given:
	\begin{equation*}
		\left| \hat{\nu}_{n,m} - \nu_m \right| \leq \left| \hat{\nu}_{n,m} - \hat{\nu}_{n,m}^o \right| + \left| \hat{\nu}_{n,m}^o - \nu_m \right|.
	\end{equation*}
 For the second term, using the triangle inequality, we obtain 
	\begin{align*}
		\left| \hat{\nu}_{n,m}^o - \nu_m \right| &\leq  \left|  \frac{1}{k} \sum_{i=1}^k \left\{ \bigvee_{j=1}^d U_{m,i}^{(j)} - \mathbb{E}\left[ \bigvee_{j=1}^d U_{m,i}^{(j)}  \right] \right\}\right| + \left| \frac{1}{k} \sum_{i=1}^k \left\{ \frac{1}{d} \sum_{j=1}^d U_{m,i}^{(j)} - \mathbb{E}\left[ \frac{1}{d} \sum_{j=1}^d U_{m,i}^{(j)} \right] \right\} \right|  \\
		& \triangleq E_1 + E_2,
	\end{align*}
	and  for the first term, 
	\begin{equation*}
		\left| \hat{\nu}_{n,m} - \hat{\nu}_{n,m}^o \right| \leq 2 \underset{j \in \{1,\dots,d\}}{\sup} \underset{x \in \mathbb{R}}{\sup} \left| \hat{F}_{n,m}^{(j)}(x) - F_{m}^{(j)}(x) \right| \triangleq E_3.
	\end{equation*}
 The rest of this proof is devoted to  control each term: $E_1$, $E_2$ and $E_3$.  Notice that the sequences $(\bigvee_{j=1}^d U_{n,m,i}^{(j)})_{i=1}^k$, $(d^{-1} \sum_{j=1}^d U_{n,m,i}^{(j)})_{i=1}^k$ and $(\mathds{1}_{\{M_{n,m,i}^{(j)} \leq x\}})_{i=1}^k$ share the same mixing regularity as $(\textbf{Z}_t)_{t \in \mathbb{Z}}$ as measurable transformation of this process. Thus, they are in particular algebraically $\varphi$-mixing.
	
	\paragraph{Control of the term $E_1$.}
	For every $i \in \{1,\dots,k\}$, we have that $|| \bigvee_{j=1}^d U_{n,m,i}^{(j)} ||_\infty \leq 1$, by applying the Hoeffding's inequality for  algebraically   $\varphi$-mixing sequences (see \cite[Corollary 2.1]{rio2017asymptotic}) we can control the following event, for $t > 0$,
	\begin{equation*}
		\mathbb{P}\left\{ E_1 \geq t \right\} \leq \sqrt{e} \exp \left\{ - \frac{t^2 k }{2( 1 + 4 \sum_{i=1}^{k-1}\varphi(i))}\right\}.
	\end{equation*}
	The term in the numerator can be bounded as
	\begin{equation*}
		1 + 4 \sum_{i=1}^k \varphi(k) \leq 1 + 4 \sum_{i=1}^k \lambda i^{-\zeta} \leq 1 + 4\lambda \left(1+\int_{1}^k x^{-\zeta}dx \right) = 1 + 4\lambda \left(1 + \frac{k^{1-\zeta}-1}{1-\zeta}\right).
	\end{equation*}
	Using the assumption $\zeta > 1$, we can upper bound $k^{1-\zeta}$ by $1$ and obtain
	\begin{equation*}
		1+4\lambda \left( 1 + \frac{k^{1-\zeta}-1}{1-\zeta}\right) \leq 1 + 4\lambda \left(1 + \frac{1}{\zeta-1} \right) = 1 + \frac{4  \lambda\zeta}{\zeta-1}.
	\end{equation*}
	We thus obtain
	\begin{equation*}
		\mathbb{P} \left\{ E_1 \geq \frac{t}{3} \right\} \leq \sqrt{e} \exp \left\{ - \frac{t^2 k}{C_3} \right\},
	\end{equation*}
	where $C_3 > 0$ is a constant depending on $\zeta$ and $\lambda$.
	\paragraph{Control of the term $E_2$.}
	This control is obtained with the same arguments used for $E_1$. Thus, we obtain, for $t > 0$, 
	\begin{equation*}
		\mathbb{P} \left\{ E_2 \geq \frac{t}{3} \right\} \leq \sqrt{e} \exp \left\{ - \frac{t^2 k}{C_3} \right\}.
	\end{equation*}
 
	\paragraph{Control of the term $E_3$.}
	This bound is more technical. Before proceeding, we introduce some notations. For every $j \in \{1,\dots,d\}$, we define 
	\begin{equation*}
		\alpha_{n,m}^{(j)} = \left(\mathbb{P}_{n,m}^{(j)} - \mathbb{P}_{m}^{(j)} \right), \quad \beta_{n,m}^{(j)}(x) = \alpha_{n,m}^{(j)} (]-\infty,x]), \quad x \in \mathbb{R},
	\end{equation*}
	where $\mathbb{P}_{n,m}^{(j)}$ corresponds to the empirical measure for the sample $(M_{m,1}^{(j)},\dots, M_{m,k}^{(j)})$ and $\mathbb{P}_{m}^{(j)}$ is the law of the random variable $M_{m}^{(j)}$. To control the term $E_3$, we introduce chaining arguments as used in the proof of Proposition 7.1 of \cite{rio2017asymptotic}. Let be $j \in \{1,\dots,d\}$ fixed and $N$ be some positive integer to be chosen later. For any real $x$ such that $F_{m}^{(j)}(x) \neq 0$ and $F_{m}^{(j)}(x) \neq 1$, let us write $F_m^{(j)}(x)$ in base $2$ :
	\begin{equation*}
		F_{m}^{(j)}(x) = \sum_{l=1}^N b_l(x) 2^{-l} + r_N(x), \textrm{ with } r_N(x) \in [0, 2^{-N}[
	\end{equation*}
	where $b_l = 0$ or $b_l = 1$. For any $L$ in $[1,\dots,N]$, set 
	\begin{equation*}
		\Pi_L(x) = \sum_{l=1}^L b_l(x) 2^{-l} \textrm{ and } i_L = \Pi_L(x)2^L. 
	\end{equation*}
	Let the reals $(x_L)_L$ be chosen in such a way that $F_{m}^{(j)}(x_L) = \Pi_L(x)$. With these notations
	\begin{align*}
		\beta_{n,m}^{(j)}(x) =& \beta_{n,m}^{(j)}(\Pi_1(x)) + \beta_{n,m}^{(j)}(x) - \beta_{n,m}^{(j)}(\Pi_N(x)) \\ &+ \sum_{L=2}^N \left[\beta_{n,m}^{(j)}(\Pi_L(x)) - \beta_{n,m}^{(j)}(\Pi_{L-1}(x)) \right] .
	\end{align*}
	Let the reals $x_{L,i}$ be defined by $F_{m}^{(j)}(x_{L,i}) = i2^{-L}$. Using the above equality, we get that
	\begin{equation*}
		\underset{x \in \mathbb{R}}{\sup} \left| \beta_{n,m}^{(j)}(x) \right| \leq \sum_{L=1}^N \Delta_L + \Delta_N^*,
	\end{equation*}
	with
	\begin{equation*}
		\Delta_L = \underset{i \in [1,2^L]}{\sup} \left| \alpha_{n,m}^{(j)}(]x_{L,i-1}, x_{L,i}]) \right| \textrm{ and } \Delta_N^* = \underset{x \in \mathbb{R}}{\sup} \left| \alpha_{n,m}^{(j)}(]\Pi_N(x),x]) \right|.
	\end{equation*}
	From the inequalities
	\begin{equation*}
		-2^{-N} \leq \alpha_{n,m}^{(j)}(]\Pi_N(x),x]) \leq \alpha_{n,m}^{(j)}(]\Pi_N(x),\Pi_N(x) + 2^{-N}]) + 2^{-N},
	\end{equation*}
	we get that
	\begin{equation*}
		\Delta_N^* \leq \Delta_N + 2^{-N} \textrm{ and } \mathbb{E}\left[ \underset{x \in \mathbb{R}}{\sup} |\beta_{n,m}^{(j)}(x) |\right] \leq 2 \sum_{L=1}^N ||\Delta_L||_1 + 2^{-N},
	\end{equation*}
	where $||\Delta_L||_1$ is the $L^1$-norm of $\Delta_L$. Let $N$ be the natural number such that $2^{N-1} < k \leq 2^N$. For this choice of $N$, we obtain
	\begin{equation*}
	\label{eq:bound_mean}
		\mathbb{E}\left[ \underset{x \in \mathbb{R}}{\sup} |\beta_{n,m}^{(j)}(x) |\right] \leq 2 \sum_{L=1}^N ||\Delta_L||_1 + k^{-1}.
	\end{equation*}
	Hence, using \cite[Lemma 7.1]{rio2017asymptotic} (where we divide by $\sqrt{k}$ the considering inequality in the lemma), we obtain that
	\begin{align*}
		\mathbb{E}\left[ \underset{x \in \mathbb{R}}{\sup} |\beta_{n,m}^{(j)}(x) |\right] &\leq 2 \frac{C_0}{\sqrt{k}} \sum_{L=1}^N \left(2^{-\frac{(\zeta-1)^2}{(4\zeta)^2}}\right)^L + k^{-1} \\ &\leq \frac{2}{\sqrt{k}} \frac{C_0}{1-2^{-\frac{(\zeta-1)^2}{(4\zeta)^2}}} + k^{-1} \triangleq C_1 k^{-1/2} + k^{-1},
	\end{align*}
	where $C_0$ and $C_1$ are constants depending on $\zeta$ and $\lambda$.
	
	Now, fix $x \in \mathbb{R}$ and denote by $\Phi : \mathbb{R}^k \mapsto [0,1]$, the function defined by
	\begin{equation*}
		\Phi(x_1,\dots, x_k) = \underset{x \in \mathbb{R}}{\sup} \left|  \frac{1}{k} \sum_{i=1}^k \mathds{1}_{\{x_i \leq x\}} - F_{m}^{(j)}(x) \right|.
	\end{equation*}
	For $\textbf{x}, \textbf{y} \in \mathbb{R}^k$, we obtain with some calculations:
	\begin{equation*}
		\left| \Phi(\textbf{x}) - \Phi(\textbf{y}) \right| \leq \underset{x \in \mathbb{R}}{\sup} \frac{1}{k} \sum_{i=1}^k \left| \mathds{1}_{\{x_i \leq x\}} -\mathds{1}_{\{y_i \leq x\}} \right| \leq \frac{1}{k} \sum_{i=1}^k \mathds{1}_{\{ x_i \neq y_i\}}.
	\end{equation*}
	Thus, $\Phi$ is $k^{-1}$-Lipschitz with respect to the Hamming distance. Under algebraically $\varphi$-mixing process, we may apply \cite[Theorem 8]{mohri2010stability} with $(M_{m,1}^{(j)},\dots,M_{m,k}^{(j)})$, we obtain with probability at least $1-\exp\{-2t^2 k / ||\Delta_k||_{\infty}^2\}$ where $||\Delta_k||_{\infty}\leq 1 + 4 \sum_{i=1}^k \varphi(i)$
	\begin{align*}
		\underset{x \in \mathbb{R}}{\sup} \left| \hat{F}_{n,m}^{(j)}(x) - F_{m}^{(j)}(x) \right| & \leq \mathbb{E}\left[ \underset{x \in \mathbb{R}}{\sup} \; \left|\beta_{n,m}^{(j)}(x) \right| \right] + \frac{t}{3}   \leq C_1 k^{-1/2} + C_2 k^{-1} + \frac{t}{3}.
	\end{align*}
	Thus, for a sufficiently large $C_3$, with probability at most $\exp\{-t^2k / C_3\}$
	\begin{equation*}
	    	\underset{x \in \mathbb{R}}{\sup} \left| \hat{F}_{n,m}^{(j)}(x) - F_{m}^{(j)}(x) \right| \geq C_1 k^{-1/2} + k^{-1} + \frac{t}{3}.
	\end{equation*}
	Using Bonferroni inequality
	\begin{equation*}
		\mathbb{P}\left\{ E_3 \geq \frac{t}{3} \right\} \leq d \mathbb{P} \left\{ \underset{x \in \mathbb{R}}{\sup} \left| \hat{F}_{n,m}^{(j)}(x) - F_{m}^{(j)}(x) \right| \geq t \right\},
	\end{equation*}
	we thus obtain a control bound for $E_3$. Assembling all the controls obtained for $E_1$, $E_2$ and $E_3$, we obtain the desired result. 

\end{proof}

The proof of Theorem \ref{thm:grow_dim} needs the following results : (1) an upper bound over the quantity $|\hat{\theta}_{n,m}(a,b) - \theta_m(a,b)|$ with respect to $|\hat{\nu}_{n,m}(a,b) - \nu_m(a,b)|$ to use the concentration inequality introduced in Proposition \ref{prop:concentration_inequality}, (2) exhibit an event such that $\{\hat{O} = \bar{O}\}$.  Lemmas \ref{lem:upper_bounds_ext_coeff} and \ref{lem:exact_recovery}   below address these two questions. Then, taking benefits of these results, we show that the probability of the exhibited event such that $\{\hat{O} = \bar{O}\}$ holds with high probability, as stated in  Theorem \ref{thm:grow_dim}.
 
\begin{lemma}
    \label{lem:upper_bounds_ext_coeff}
    Consider a pair $(a,b) \in \{1,\dots,d\}^2$, the following inequality holds:
    \begin{equation*}
        |\hat{\theta}_{n,m}(a,b) - \theta_m(a,b)| \leq 9 |\hat{\nu}_{n,m}(a,b) - \nu_m(a,b)|.
    \end{equation*}
\end{lemma}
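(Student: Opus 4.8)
The plan is to exploit the explicit algebraic relationship between the extremal coefficient and the madogram, namely $\theta = (1/2 + \nu)/(1/2 - \nu)$, and to control the difference of the two quotients by a mean-value/Lipschitz argument on the map $\nu \mapsto (1/2+\nu)/(1/2-\nu)$. Write $g(\nu) = (1/2+\nu)/(1/2-\nu) = -1 + 1/(1/2-\nu)$, so that $\hat{\theta}_{n,m}(a,b) = g(\hat{\nu}_{n,m}(a,b))$ and $\theta_m(a,b) = g(\nu_m(a,b))$. Then $\hat{\theta}_{n,m}(a,b) - \theta_m(a,b) = g(\hat{\nu}_{n,m}(a,b)) - g(\nu_m(a,b))$, and since $g'(\nu) = 1/(1/2-\nu)^2$, by the mean value theorem there is a point $\xi$ between $\hat{\nu}_{n,m}(a,b)$ and $\nu_m(a,b)$ with
\begin{equation*}
    |\hat{\theta}_{n,m}(a,b) - \theta_m(a,b)| = \frac{|\hat{\nu}_{n,m}(a,b) - \nu_m(a,b)|}{(1/2 - \xi)^2}.
\end{equation*}
The task then reduces to a uniform lower bound on $(1/2 - \xi)^2$, equivalently an upper bound on $\xi$ bounded away from $1/2$.

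The key point is that, in the bivariate ($d=2$) case used here, both $\nu_m(a,b)$ and $\hat{\nu}_{n,m}(a,b)$ live in a controlled range. From the definition in \eqref{eq:theoretical_mado} with $d=2$, one has $\nu_m(a,b) \in [0, 1/6]$: the madogram of a bivariate copula is nonnegative and is maximized (value $1/6$) at the independence copula, with the general bound following from $1 \le \theta_m(a,b) \le 2$ together with $\theta_m = (1/2+\nu_m)/(1/2-\nu_m)$, which inverts to $\nu_m = (\theta_m-1)/(2(\theta_m+1)) \in [0,1/6]$. The same computation applies to the empirical quantity: $\hat{\nu}_{n,m}(a,b)$ as defined in \eqref{eq:est_mado} is an average of terms $\bigvee_{j} \hat U^{(j)} - \tfrac12\sum_j \hat U^{(j)}$ built from scaled ranks in $\{1/k,\dots,1\}$ (or $(0,1)$), and a direct check shows each such term lies in $[0,1/6]$ up to a negligible $O(1/k)$ rank-correction, hence $\hat{\nu}_{n,m}(a,b) \le 1/6$ as well (this is the one routine verification I would spell out carefully, since the scaled ranks are not exactly uniform). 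Consequently the intermediate point $\xi$ satisfies $\xi \le 1/6$, so $(1/2-\xi)^2 \ge (1/3)^2 = 1/9$, giving
\begin{equation*}
    |\hat{\theta}_{n,m}(a,b) - \theta_m(a,b)| \le 9\,|\hat{\nu}_{n,m}(a,b) - \nu_m(a,b)|,
\end{equation*}
which is exactly the claim.

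The main obstacle I anticipate is the empirical-side bound $\hat{\nu}_{n,m}(a,b) \le 1/6$: unlike the population madogram, the estimator is based on scaled ranks rather than true uniform marginals, so one must either argue that the bivariate max-minus-average functional is maximized over all configurations of ordered ranks at (near-)independence, or absorb a small $O(1/k)$ slack and note that it only worsens the constant $9$ to $9 + O(1/k)$, which can be made exactly $9$ by being slightly more careful or by instead using the pointwise bound $\hat U^{(j)} \in [0,1]$ to get the cruder $\xi < 1/2$ bound and then tightening. An alternative that sidesteps the rank subtlety entirely is to not appeal to the range of $\hat\nu$ at all: split into the case $|\hat\nu_{n,m}(a,b) - \nu_m(a,b)|$ large (where the trivial bound $|\hat\theta - \theta| \le $ something and $9|\hat\nu-\nu| \ge$ something already suffices) versus small, but the mean-value argument above is cleaner and I would present that, flagging the rank bound as the single computational lemma to verify.
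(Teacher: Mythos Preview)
Your proposal is correct and follows essentially the same route as the paper: both write $\theta = f(\nu)$ with $f(x) = (1/2+x)/(1/2-x)$, bound $|f'|$ on $[0,1/6]$ by $9$, and conclude by Lipschitz continuity. The concern you flag about $\hat{\nu}_{n,m}(a,b) \le 1/6$ is one the paper also elides---it simply restricts $f$ to $[0,1/6]$ by asserting $f(x)\in[1,2]$ ``by definition of the sub-asymptotic extremal coefficient $\theta_m$'' without separately justifying the empirical side.
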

\begin{proof}[Proof of Lemma \ref{lem:upper_bounds_ext_coeff}]
    We may write the respective quantities as $\theta = f(\nu(a,b))$ and $\hat{\theta}_{n,m} = f(\hat{\nu}_{n,m}(a,b))$ where $f$ is a function defined as follows,  
    \begin{equation*}
        \begin{array}{ccccc}
f & : & [0,1/6] & \to & [1,2] \\
& & x & \mapsto & \frac{1/2 + x}{1/2 - x}, \\
        \end{array}
    \end{equation*}
    with $f(x) \in [1,2]$ by definition of the pre-asymptotic extremal coefficient $\theta_m$. The domain of this function is restricted to the interval $[0,1/6]$ because we have $f(x) \leq 2$, or
    \begin{equation*}
        x + \frac{1}{2} \leq 1 - 2 \, x,
    \end{equation*}
    which holds if $x \leq 1/6$. The inequality $f(x) \geq 1$ gives the positivity of the domain. In particular, $x < 1/2$ and thus $2^{-1} - x \geq 3^{-1} > 0$. Taking derivative of $f$, we find that
    \begin{equation*}
        |f'(x)| = \frac{1}{(1/2 - x)^2} \leq 3^2, \quad x \in \left[0, 1/6\right].
    \end{equation*}
    Therefore, $f$ is $9$-Lipschitz continuous and we have
    \begin{equation*}
        |\hat{\theta}_{n,m}(a,b) - \theta_m(a,b)| = \left| f(\hat{\nu}_{n,m}(a,b)) - f(\nu_m(a,b)) \right| \leq 9 |\hat{\nu}_{n,m}(a,b) - \nu_m(a,b)|.
    \end{equation*}
    This completes the proof.
\end{proof}

\begin{lemma} \label{lem:exact_recovery}
    Consider the AI-block model in Definition \ref{def:AI_block_models}. Define  
    \begin{equation*}
        \kappa = \underset{a,b \in \{1,\dots,d\}}{\sup} \, |\hat{\chi}_{n,m}(a,b) - \chi(a,b)|.
    \end{equation*}
    Consider parameters $(\tau,\eta)$ fulfilling
    \begin{equation}
        \label{eq:condition_consistent_recovery}
        \tau \geq \kappa, \quad \eta \geq \kappa + \tau.
    \end{equation}
    If $\MECO(\mathcal{X}) > \eta$,  then Algorithm \ref{alg:rec_pratic} yields $\hat{O} = \bar{O}$.
    
 \end{lemma}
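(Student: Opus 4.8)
The plan is to show that under the stated conditions Algorithm~\ref{alg:rec_pratic} peels off the blocks of $\bar{O}$ one at a time. The first step is to turn the hypotheses into a separation statement for the empirical extremal correlations. By Condition~\ref{ass:A} together with \eqref{eq:indep_ext_corr}, we have $\chi(a,b)>0$ whenever $a\overset{\bar{O}}{\sim}b$ and $\chi(a,b)=0$ whenever $a\overset{\bar{O}}{\not\sim}b$. Combining this with the definitions of $\kappa$ and of $\MECO(\mathcal{X})$ and with the hypotheses $\tau\ge\kappa$, $\eta\ge\kappa+\tau$, $\MECO(\mathcal{X})>\eta$, we obtain: if $a\overset{\bar{O}}{\sim}b$ with $a\ne b$ then $\hat{\chi}_{n,m}(a,b)\ge\chi(a,b)-\kappa\ge\MECO(\mathcal{X})-\kappa>\eta-\kappa\ge\tau$; whereas if $a\overset{\bar{O}}{\not\sim}b$ then $\hat{\chi}_{n,m}(a,b)\le\chi(a,b)+\kappa=\kappa\le\tau$. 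So same-cluster pairs sit strictly above the threshold and different-cluster pairs sit at most at the threshold (strictly below it whenever $\tau>\kappa$, which is the only relevant regime). We also record that the empirical madogram of a single coordinate is identically zero, so $\hat{\chi}_{n,m}(a,a)=1>\tau$; in particular the two seeds picked at each step always end up in the cluster they generate.

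The second step is an induction over the iterations of the \textbf{while} loop, with the invariant: at the start of iteration $l$ the active set $S$ is a union of blocks of $\bar{O}$. This holds initially because $S=\{1,\dots,d\}=\bigcup_g\bar{O}_g$. For the inductive step write $S=\bigcup_{g\in\mathcal{G}}\bar{O}_g$ and split into three cases. If $|S|=1$, then $S$ is itself a singleton block and the algorithm returns $\hat{O}_l=S$. If $|S|>1$ but every block inside $S$ is a singleton, then every pair of distinct indices in $S$ is a different-cluster pair, so $\max_{a\ne b\in S}\hat{\chi}_{n,m}(a,b)\le\tau$, the algorithm enters the singleton branch, and $\hat{O}_l=\{a_l\}$ is again a block of $\bar{O}$. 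If instead $S$ contains a block $\bar{O}_g$ with at least two elements, that block witnesses $\max_{a\ne b\in S}\hat{\chi}_{n,m}(a,b)>\tau$ by Step~1, the algorithm enters the last branch, and the maximizing pair satisfies $\hat{\chi}_{n,m}(a_l,b_l)>\tau\ge\kappa$; by the contrapositive of the second bound in Step~1 this forces $a_l\overset{\bar{O}}{\sim}b_l$, say $a_l,b_l\in\bar{O}_{g^\ast}\subseteq S$.

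The remaining work is to verify that in this last case $\hat{O}_l=\bar{O}_{g^\ast}$. For $\bar{O}_{g^\ast}\subseteq\hat{O}_l$: any $s\in\bar{O}_{g^\ast}$ has $\hat{\chi}_{n,m}(a_l,s)>\tau$ and $\hat{\chi}_{n,m}(b_l,s)>\tau$ — for $s\notin\{a_l,b_l\}$ by Step~1, and for $s\in\{a_l,b_l\}$ because $\hat{\chi}_{n,m}(a_l,b_l)>\tau$ and $\hat{\chi}_{n,m}(a_l,a_l)=\hat{\chi}_{n,m}(b_l,b_l)=1>\tau$ — hence $s\in\hat{O}_l$. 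For the reverse inclusion: any $s\in S\setminus\bar{O}_{g^\ast}$ lies in a different block, so $a_l\overset{\bar{O}}{\not\sim}s$ and Step~1 gives $\hat{\chi}_{n,m}(a_l,s)\le\kappa\le\tau$, so $s$ fails the membership test and $s\notin\hat{O}_l$. Thus in all three cases $\hat{O}_l$ is a block of $\bar{O}$ contained in $S$, so $S\setminus\hat{O}_l$ is again a union of blocks (the invariant is preserved) and $\hat{O}_l\ne\emptyset$, so $|S|$ strictly decreases. Since $\bar{O}$ has finitely many blocks, the loop terminates after outputting $\bar{O}_1,\dots,\bar{O}_G$ each exactly once, i.e.\ $\hat{O}=\bar{O}$.

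The main obstacle is the inductive step: one must show simultaneously that the greedy pair $(a_l,b_l)$ cannot straddle two true clusters and that the $\tau$-neighbourhood of these seeds is \emph{exactly} one cluster, neither too large nor too small — and this is precisely where the two margins $\tau\ge\kappa$ and $\eta\ge\kappa+\tau$, combined with the strict separation $\MECO(\mathcal{X})>\eta$, enter, and where some care with singleton blocks and with the diagonal of $\hat{\mathcal{X}}$ is needed.
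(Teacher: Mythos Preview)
Your proof is correct and follows essentially the same two-step approach as the paper: first turn the hypotheses into the separation $a\overset{\bar O}{\sim}b\iff \hat\chi_{n,m}(a,b)>\tau$, then induct over the iterations of the \textbf{while} loop. Your version is simply more explicit than the paper's about the case distinctions (singleton blocks, the diagonal $\hat\chi_{n,m}(a,a)=1$, and the two inclusions $\bar O_{g^\ast}\subseteq\hat O_l$ and $\hat O_l\subseteq\bar O_{g^\ast}$), and your invocation of Condition~\ref{ass:A} is harmless but redundant, since $\MECO(\mathcal X)>\eta\ge\kappa+\tau$ already forces $\chi(a,b)>0$ for same-cluster pairs.
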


\begin{proof}[Proof of Lemma \ref{lem:exact_recovery}]
    If $a \overset{\bar{O}}{\not \sim} b$, then $\chi(a,b) = 0$ and 
    \begin{equation*}
        \hat{\chi}_{n,m}(a,b) = \hat{\chi}_{n,m}(a,b) - \chi(a,b) \leq \kappa \leq \tau.
    \end{equation*}
    Now, if $a \overset{\bar{O}}{\sim} b$, if $\mathcal{X} \in \mathbb{X}(\eta)$ then $\chi(a,b) > \kappa + \tau$ and 
    \begin{equation*}
        \kappa + \tau < \chi(a,b) - \hat{\chi}_{n,m}(a,b) + \hat{\chi}_{n,m}(a,b),
    \end{equation*}
    and thus $\hat{\chi}_{n,m}(a,b) > \tau$. 
    In particular, under \eqref{eq:condition_consistent_recovery} and the separation condition $\MECO(\mathcal{X}) > \eta$, we have 
    \begin{equation}
        \label{eq:equivalence_exact_recovery}
    a \overset{\bar{O}}{ \sim } b \quad \Longleftrightarrow \quad \hat{\chi}_{n,m}(a, b) > \tau.
    \end{equation}
    Let us prove the   lemma  by induction on the algorithm step $l$. We consider the algorithm at some step $l-1$ and assume that the algorithm was consistent up to this step, i.e. $\hat{O}_j = \bar{O}_j$ for $j = 1,\dots,l-1$.
    
    If $\hat{\chi}_{n,m}(a_l,b_l) \leq \tau$, then according to \eqref{eq:equivalence_exact_recovery}, no $b \in S$ is in the same group of $a_l$. Since the algorithm has been consistent up to this step $l$, it means that $a_l$ is a singleton and $\hat{O}_l = \{a_l\}$.
    
    If $\hat{\chi}_{n,m}(a_l,b_l) > \tau$, then $a_l \overset{\bar{O}}{\sim} b$ according to \eqref{eq:equivalence_exact_recovery}. Furthermore, the equivalence implies that $\hat{O}_l = S \cap \bar{O}_l$. Since the algorithm has been consistent up to this step, we have $\hat{O}_l = \bar{O}_l$. To conclude, the algorithm remains consistent at the step $l$ and the  result  follows by induction.
    
\end{proof}
	
\begin{proof}[Proof of Theorem \ref{thm:grow_dim}]
    We have that for $t>0$ :
    \begin{equation*}
        \mathbb{P}\left\{\underset{a,b \in \{1,\dots,d\}}{\sup} \, |\hat{\theta}_{n,m}(a,b) - \theta_m(a,b)| \geq t \right\} \leq d^2 \mathbb{P}\left\{|\hat{\theta}_{n,m}(a,b) - \theta_m(a,b)| \geq t \right\}.
    \end{equation*}
    With probability at least $1 - 2(1+\sqrt{e})d^2 \exp \{-t^2 k /C_3\}$, and by using Proposition \ref{prop:concentration_inequality} and Lemma \ref{lem:upper_bounds_ext_coeff}, one has
    
    \begin{equation*}
        \underset{a,b \in \{1,\dots,d\}}{\sup} \, \left| \hat{\theta}_{n,m}(a,b) - \theta(a,b) \right| \leq d_m + C_1 k^{-1/2} + C_2 k^{-1} + t,
    \end{equation*}
    
    By considering $\delta \in ]0,1[$ and solve the following equation
    
    \begin{equation*}
        \frac{\delta}{d^2} = 2(1+\sqrt{e}) \exp\left\{ - \frac{k t^2}{C_3}\right\},
    \end{equation*}
    
    with respect to $t$ gives that the event
    
    \begin{equation*}
        \underset{a,b \in \{1,\dots,d\}}{\sup} \, \left| \hat{\theta}_{n,m}(a,b) - \theta(a,b) \right| \geq d_m +C_1k^{-1/2} + C_2 k^{-1} + C_3 \sqrt{\frac{1}{k} \ln\left( \frac{2(1+\sqrt{e}) d^2}{\delta} \right) },
    \end{equation*}
    
    is of probability at most $\delta$. Now, taking $\delta = 2(1+\sqrt{e})d^{-2\gamma}$, with $\gamma > 0$, we have
    
    \begin{equation*}
       \underset{a,b \in \{1,\dots,d\}}{\sup} \, \left| \hat{\theta}_{n,m}(a,b) - \theta(a,b) \right| \leq d_m +C_1k^{-1/2} + C_2 k^{-1} + C_3 \sqrt{\frac{(1+\gamma)\ln(d)}{k}},
    \end{equation*}
    
    with probability at least $1 - 2(1+\sqrt{e})d^{-2\gamma}$ for $C_3$ sufficiently large. The result then follows from Lemma \ref{lem:exact_recovery} along with Condition \ref{ass:A} and algebraically $\varphi$-mixing random process, since
    
    \begin{equation*}
        \mathbb{P}\left\{\kappa \leq d_m +C_1k^{-1/2} + C_2 k^{-1} + C_3 \sqrt{\frac{(1+\gamma)\ln(d)}{k}} \right\} \geq 1-2(1+\sqrt{e})d^{-2\gamma},
    \end{equation*}
    
    and $\MECO(\mathcal{X}) > \eta$ by assumption.
\end{proof}

{Therein, we prove the argument that were stated without proof in the paragraph next to Theorem \ref{thm:grow_dim}. A condition of order two were introduced and we have state that $d_m = O(\Psi_m)$ can be shown. We propose a proof of this statement below.}

\begin{proof}[Proof of $d_m = O(\Psi(m))$]
Take $a \neq b$ fixed, we have, using Lemma \ref{lem:upper_bounds_ext_coeff}
\begin{equation*}
    \left| \chi_m(a,b) - \chi(a,b) \right| = \left| \theta_m(a,b) - \theta(a,b) \right| \leq 9\left| \nu_m(a,b) - \nu(a,b) \right|,
\end{equation*}
where $\nu_m(a,b)$ (resp. $\nu(a,b)$) is the madogram computed between $M_{m}^{(a)}$ and $M_{m}^{(b)}$ (resp. between $X^{(a)}$ and $X^{(b)}$) and we use Lemma \ref{lem:upper_bounds_ext_coeff} to obtain the inequality. Using the results of Lemma 1 of \cite{marcon2017multivariate}, we have
\begin{align*}
    \nu_m(a,b) - \nu(a,b) &= \frac{1}{2}\left(\int_{[0,1]} (C_m-C_\infty)(\textbf{1},x^{(a)},\textbf{1}) dx^{(a)} + \int_{[0,1]} (C_m-C_\infty)(\textbf{1},x^{(b)},\textbf{1}) dx^{(b)}\right) \\ &- \int_{[0,1]} (C_m-C_\infty)(1,\dots,\underbrace{x}_{\textrm{$a$th index}},1,\dots,1,\underbrace{x}_{\textrm{$b$th index}},\dots,1)dx,
\end{align*}
where the integration is taken respectively for the $a$-th, $b$-th and $a$,$b$-th components. Hence
\begin{align*}
    \left| \nu_m(a,b) - \nu(a,b) \right| &\leq \frac{1}{2} \int_{[0,1]} |(C_m-C_\infty)(\textbf{1},x^{(a)},\textbf{1})| dx^{(a)} \\ &+ \frac{1}{2}\int_{[0,1]} |(C_m-C_\infty)(\textbf{1},x^{(b)},\textbf{1})| dx^{(b)} \\ &+ \int_{[0,1]} |(C_m-C_\infty)(1,\dots,\underbrace{x}_{\textrm{$a$th index}},1,\dots,1,\underbrace{x}_{\textrm{$b$th index}},\dots,1)|dx.
\end{align*}

Using the second order condition in Equation \eqref{eq:second_order} we obtain that $|C_m - C_\infty|(\textbf{u}) = O(\Psi_m)$, uniformly in $\textbf{u} \in [0,1]^d$. Hence the statement.
\end{proof}

Now, we prove the theoretical result giving support to our cross validation process.
\begin{proof}[Proof of Proposition \ref{prop:cv}]
{Using triangle inequality several times, we may obtain the following bound}
\begin{align*}
    \widehat{\SECO}_{n,m}(\bar{O}) - \widehat{\SECO}_{n,m}(\hat{O}) &\leq 2 D_m + |\widehat{\SECO}_{n,m}(\bar{O}) - \SECO_m(\bar{O})| \\ &+ |\widehat{\SECO}_{n,m}(\hat{O}) - \SECO_m(\hat{O})| + \SECO(\bar{O}) - \SECO(\hat{O}) \\ &=: 2 D_m + E_1 + E_2 + \SECO(\bar{O}) - \SECO(\hat{O}) .
\end{align*}
{Taking expectancy, we now have}
\begin{align*}
    \mathbb{E}[\widehat{\SECO}_{n,m}(\bar{O}) - \widehat{\SECO}_{n,m}(\hat{O})] &\leq 2 D_m + \mathbb{E}[E_1] + \mathbb{E}[E_2] + \SECO(\bar{O}) - \SECO(\hat{O}).
\end{align*}
{Using the same tool involved in the proof of Lemma \ref{lem:upper_bounds_ext_coeff}, we can show}
\begin{equation*}
    |\hat{\theta}_{n,m}^{(\bar{O}_g)} - \hat{\theta}_m^{(\bar{O}_g)}| \leq (d_g + 1)^2 |\hat{\nu}_{n,m}^{(\bar{O}_g)} - \hat{\nu}_m^{(\bar{O}_g)}|,
\end{equation*}
{Thus, using concentration bounds in Proposition \ref{prop:concentration_inequality}, there exists a universal constant $K_1 > 0$ independent of $n,k,m,t$ such that}
\begin{equation*}
    \mathbb{P}\left\{ |\hat{\theta}_{n,m}^{(\bar{O}_g)} - \hat{\theta}_m^{(\bar{O}_g)}| \geq t \right\} \leq d_g \exp\left\{  - \frac{t^2 k}{K_1 d_g^4}\right\}.
\end{equation*}
{Now,}
\begin{align*}
    \mathbb{P}\left\{ |\widehat{\SECO}_{n,m}(\bar{O}) - \SECO_m(\bar{O})| \geq t \right\} &\leq \sum_{g=1}^G\mathbb{P}\left\{ |\hat{\theta}_{n,m}^{(\bar{O}_g)} - \hat{\theta}_m^{(\bar{O}_g)}| \geq \frac{t}{G} \right\} \\ &\leq d \exp\left\{  - \frac{t^2 k }{K_1 G^2 \vee_{g=1}^G d_g^4}\right\}
\end{align*}
{Thus, for every $\delta > 0$, one obtains}
\begin{equation*}
    \mathbb{E}[E_1]^2 \leq \mathbb{E}[E_1^2] \leq \delta + \int_{\delta}^{\infty} \mathbb{P}\left\{ E_1 > t^{1/2} \right\} dt \leq \delta + d \int_{\delta}^{\infty} \exp\left\{  -\frac{t}{2\sigma^2}\right\}dt,
\end{equation*}
{where $\sigma^2 = \frac{K_1 G^2 \vee_{g=1}^G d_g^4}{2k}$. Set $\delta = 2 \sigma^2 \ln(d)$, we can obtain}

\begin{equation*}
    \mathbb{E}[E_1]^2 \leq \delta + 2\sigma^2 = c^2 \frac{\ln(d)G^2 \vee_{g=1}^G d_g^4}{k}
\end{equation*}
{with $c > 0$. Same results hold for $\mathbb{E}[E_2]$ with corresponding sizes, thus}
\begin{align*}
    \mathbb{E}[\widehat{\SECO}_{n,m}(\bar{O}) - \widehat{\SECO}_{n,m}(\hat{O})] &\leq 2\left(D_m + c\sqrt{\frac{\ln(d)}{k}} \max(G,I) \max(\vee_{g=1}^G d_g^2, \vee_{i=1}^I d_i^2) \right) \\ &+ \SECO(\bar{O}) - \SECO(\hat{O}),
\end{align*}
{which is strictly negative by assumption.}
\end{proof}

\subsection{Proofs of Section \ref{sec:example}}
\label{subsec:example_proofs}
In the following we prove that the  model introduced  in Section \ref{sec:example} is in the domain of attraction of an AI-block model. This comes down from some elementary algebra where the fundamental argument is given by \cite[Proposition 4.2]{bucher2014extreme}, from which the inspiration for the model was drawn thereof.
	
\begin{proof}[Proof of Proposition \ref{prop:domain_attraction_repetition_model}]
We aim to show that the following quantity
\begin{align*}
	\bigg| &D\left(D^{(O_1)}(\{\textbf{u}^{(O_1)}\}^{1/m}; \theta, \beta_1), \dots, D(\{\textbf{u}^{(O_G)}\}^{1/m}; \theta, \beta_G) ; \theta, \beta_0 \right)^m \\ &- D\left(D^{(O_1)}(\textbf{u}^{(O_1)}; \beta_1), \dots, D^{(O_G)}(\textbf{u}^{(O_G)}; \beta_G); \beta_0\right)\bigg|,
\end{align*}
converges to $0$ uniformly in $\textbf{u} \in [0,1]^d$. Using  Equation \eqref{eq:clayton_powerm} in the main article, the latter term is equal to
\begin{align*}
	E_{0,m} :=& \bigg| D\left(D^{(O_1)}(\textbf{u}^{(O_1)}; \theta/m,\beta_1)^{1/m}, \dots, D^{(O_G)}(\textbf{u}^{(O_G)}; \theta/m,\beta_G)^{1/m}; \theta,\beta_0\right)^m \\ &- D\left(D^{(O_1)}(\textbf{u}^{(O_1)}; \beta_1), \dots, D^{(O_G)}(\textbf{u}^{(O_G)}; \beta_G) ; \beta_0\right)\bigg|.
\end{align*}
Thus
\begin{align*}
	E_{0,m} \leq &\bigg| D\left(D^{(O_1)}(\textbf{u}^{(O_1)}; \theta/m,\beta_1)^{1/m}, \dots, D^{(O_G)}(\textbf{u}^{(O_G)}; \theta/m,\beta_G)^{1/m}; \theta,\beta_0\right)^m \\ &- D\left(D^{(O_1)}(\textbf{u}^{(O_1)}; \theta/m,\beta_1), \dots, D^{(O_G)}(\textbf{u}^{(O_G)}; \theta/m,\beta_G); \beta_0\right)\bigg|\\
	&+ \bigg|D\left(D^{(O_1)}(\textbf{u}^{(O_1)}; \theta/m,\beta_1), \dots, D^{(O_G)}(\textbf{u}^{(O_G)}; \theta/m,\beta_G); \beta_0\right) \\ &- D\left(D^{(O_1)}(\textbf{u}^{(O_1)}; \beta_1), \dots, D^{(O_G)}(\textbf{u}^{(O_G)}; \beta_G) ; \beta_0\right)\bigg|
	\\ &=: E_{1,m} + E_{2,m}.
\end{align*}

As $D(\cdot ;\theta/m, \beta_0)$ converges uniformly to $D(\cdot,\beta_0)$, then, uniformly in $\textbf{u} \in [0,1]^d$, 
$E_{1,m} \underset{m \rightarrow \infty}{\longrightarrow} 0.$
Now, using Lipschitz property of the copula function, one has
\begin{equation*}
	E_{2,m} \leq \sum_{g=1}^G \left| D^{(O_g)}(\textbf{u}^{(O_g)}; \theta/m, \beta_g) - D^{(O_g)}(\textbf{u}^{(O_g)}; \beta_g) \right|,
\end{equation*}
which converges almost surely to $0$ as $m \rightarrow \infty$. The limiting copula is an extreme value copula by $\beta_0 \leq \min\{\beta_1,\dots,\beta_G\}$, see Example 3.8 of \cite{hofert2018hierarchical}. Hence the result.
\end{proof}

\section{Additional results}
\label{sec:aux_results}

\subsection{Additional results of Section \ref{sec:variable_clust}}

\label{sec:aux_results_section_variable_clust}

{Let $\textbf{Z} \geq 0$ be a random vector, and for simplicity, let's assume that it has heavy-tailed marginal distributions with a common tail-index $\alpha > 0$. There are two distinct yet closely related classical approaches for describing the extreme values of the multivariate distribution of $\textbf{Z}$.}

{The first approach focuses on scale-normalized componentwise maxima:}
\begin{equation*}
{c_n^{-1} \bigvee_{i=1}^n \textbf{Z}_i,}
\end{equation*}
{where $\textbf{Z}_i$ are independent copies of $\textbf{Z}$, and $c_n$ is a scaling sequence. The limiting results are typically derived under the assumption of independence for the sake of consistency. However, they hold under more general conditions, such as mixing conditions (see, e.g., \cite{hsing1989extreme}). The only possible limit laws for such maxima are max-stable distributions with the following distribution function:}
\begin{equation*}
{\underset{n \rightarrow \infty}{\lim} \mathbb{P} \left\{ \bigvee_{i=1}^n \textbf{Z}_i \leq c_n \textbf{u} \right\} = e^{-\Lambda([0,\textbf{u}]^c)}, \quad \textbf{u} \in \mathbb{R}^d+ \setminus {\boldsymbol{0}},}
\end{equation*}
{where the exponent measure $\Lambda$ is $(-\alpha)$-homogeneous.}

{The second approach examines the distribution of scale-normalized exceedances:}
\begin{equation*}
{u^{-1} \, \textbf{Z} \,| \bigvee_{j=1}^d Z^{(j)} > u,}
\end{equation*}
{which considers conditioning on the event that at least one component $Z^{(j)}$ exceeds a high threshold $u$. The only possible limits of these peak-over-thresholds as $u \rightarrow \infty$ are multivariate Pareto distributions (\cite{rootzen2006multivariate}). The probability laws of these distributions are induced by a homogeneous measure $\Lambda$ on the set $\mathcal{L} = E \setminus [0,1]^d$, where $E = [0,\infty)^d \setminus {\boldsymbol{0}}$. The probability measure takes the form:}
\begin{equation*}
{\mathbb{P}_{\mathcal{L}}(dy) = \frac{\Lambda(dy)}{\Lambda(\mathcal{L})}.}
\end{equation*}
The exponent measure serves as a clear connection between these two approaches, as it characterizes the distribution function for both cases. In fact, the connection arises from a fundamental limiting result that establishes a link between the two approaches through regular variation. This result has been elegantly presented in Theorem 2.1.6 and Equation (2.3.1) in \cite{kulik2020heavy}. As in the main text, let us denote by $\textbf{X}$ the random vector with extreme value distribution $H(\textbf{x}) = e^{-\Lambda(E \setminus [0,\textbf{x}])}$. The following proposition provides the form of the exponent measure when the random vectors $\textbf{X}^{(O_1)}, \dots, \textbf{X}^{(O_G)}$ are independent, and it establishes the connection between AI-block models for the two approaches.
\begin{proposition}
	\label{rm:rv_independent}
	Suppose $\normalfont{\textbf{X}}$ is a random vector having extreme value distribution H with exponent measure $\Lambda$ concentrating on $E \setminus [0,\normalfont{\textbf{x}}]$ where $E = [0,\infty)^d \setminus \{\boldsymbol{0}\}$ and $\normalfont{\textbf{x}} > \normalfont{\textbf{0}}$. The following properties are equivalent:
		\begin{enumerate}[label=\textcolor{frenchblue}{(\roman*)}]
		\item The vectors $\normalfont{\textbf{X}}^{(O_1)}, \dots, \normalfont{\textbf{X}}^{(O_G)}$ are independent. \label{prop_(i):mutual_independence}
		\item The vectors are blockwise independent: for every $1 \leq g < h \leq G$
		\begin{equation*}
			\normalfont{\textbf{X}}^{(O_g)} \textrm{ and } \normalfont{\textbf{X}}^{(O_h)}, \textrm{ are independent random vectors.} \label{prop_(ii):blockwise}
		\end{equation*}
		\item The exponent measure $\Lambda$ concentrates on
		\begin{equation}
			\bigcup_{g=1}^G \{ \boldsymbol{0} \}^{d_1} \times \dots \times ]0,\infty[^{d_g} \times \dots \times \{ \boldsymbol{0} \}^{d_G},
			\label{eq:support_measure}
		\end{equation}
		so that for $\normalfont{\textbf{x}} > \normalfont{\textbf{0}}$, 
		\begin{equation*}
            \Lambda \left( \bigcup_{1 \leq g < h \leq G} \left\{ \normalfont{\textbf{y}} \in E, \exists a \in O_g,  \exists b \in O_h, y^{(a)} > x^{(a)}, y^{(b)} > x^{(b)} \right\} \right) = 0.
        \end{equation*}
  \label{prop_(iii):measure}
		\end{enumerate}
    \end{proposition}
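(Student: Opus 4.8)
The plan is to prove the cycle \ref{prop_(i):mutual_independence} $\Rightarrow$ \ref{prop_(ii):blockwise} $\Rightarrow$ \ref{prop_(iii):measure} $\Rightarrow$ \ref{prop_(i):mutual_independence}, using only the exponent-measure representation $\mathbb{P}\{\textbf{X} \le \textbf{x}\} = \exp\{-\Lambda(E \setminus [0,\textbf{x}])\}$; homogeneity of $\Lambda$ is not needed. Since independence of the subvectors and the location of the support of $\Lambda$ on coordinate faces are copula properties, invariant under increasing marginal transformations, I would first reduce to unit Fréchet margins, so that $H$ is supported on $(0,\infty)^d$ and $\Lambda$ gives no mass at infinity. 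For $\textbf{x} > \textbf{0}$ and $g \in \{1,\dots,G\}$ put $A_g(\textbf{x}^{(O_g)}) = \{\textbf{z} \in E : z^{(a)} > x^{(a)} \text{ for some } a \in O_g\}$; then $E \setminus [0,\textbf{x}] = \bigcup_{g=1}^G A_g(\textbf{x}^{(O_g)})$, and a short argument (sending the coordinates outside $O_g$ to $+\infty$) gives $\mathbb{P}\{\textbf{X}^{(O_g)} \le \textbf{x}^{(O_g)}\} = \exp\{-\Lambda(A_g(\textbf{x}^{(O_g)}))\}$. These two identities are the only analytic input.

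The implication \ref{prop_(i):mutual_independence} $\Rightarrow$ \ref{prop_(ii):blockwise} is immediate. For \ref{prop_(ii):blockwise} $\Rightarrow$ \ref{prop_(iii):measure}, fix a pair $g \ne h$: blockwise independence of $\textbf{X}^{(O_g)}$ and $\textbf{X}^{(O_h)}$ says $\exp\{-\Lambda(A_g \cup A_h)\} = \exp\{-\Lambda(A_g)\}\exp\{-\Lambda(A_h)\}$ for every $\textbf{x} > \textbf{0}$, i.e. $\Lambda(A_g(\textbf{x}^{(O_g)}) \cap A_h(\textbf{x}^{(O_h)})) = 0$ by inclusion--exclusion. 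Letting $\textbf{x} \downarrow \textbf{0}$ along $n^{-1}\textbf{1}$ and using continuity of $\Lambda$ from below yields $\Lambda(\{\textbf{z} \in E : \textbf{z}^{(O_g)} \ne \textbf{0} \text{ and } \textbf{z}^{(O_h)} \ne \textbf{0}\}) = 0$, which is the quantitative form of \ref{prop_(iii):measure} for that pair; the finite union over all pairs shows $\Lambda$ charges only points with at most one non-zero block, i.e. the union of faces in \eqref{eq:support_measure}. The two formulations of \ref{prop_(iii):measure} are equivalent by elementary set algebra, so I would keep whichever is handier at each point.

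For \ref{prop_(iii):measure} $\Rightarrow$ \ref{prop_(i):mutual_independence}, write $S_g$ for the $g$-th face in \eqref{eq:support_measure}; the $S_g$ are pairwise disjoint inside $E$ and $\Lambda$ concentrates on their union. If $\textbf{x} > \textbf{0}$ and $h \ne g$ then $A_g(\textbf{x}^{(O_g)}) \cap S_h = \emptyset$, because a point of $S_h$ has all $O_g$-coordinates zero and so cannot exceed a strictly positive $x^{(a)}$; hence $\Lambda(A_g) = \Lambda(A_g \cap S_g)$, the sets $A_g \cap S_g$ are disjoint, and $\Lambda(E \setminus [0,\textbf{x}]) = \Lambda\bigl(\bigcup_g A_g\bigr) = \sum_g \Lambda(A_g \cap S_g) = \sum_g \Lambda(A_g)$. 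Exponentiating, $\mathbb{P}\{\textbf{X} \le \textbf{x}\} = \prod_{g=1}^G \mathbb{P}\{\textbf{X}^{(O_g)} \le \textbf{x}^{(O_g)}\}$ for all $\textbf{x} > \textbf{0}$; this factorisation of $H$ extends to $\mathbb{R}^d$ because both sides vanish once a coordinate reaches the common marginal lower endpoint, and by the paper's definition of independence this is \ref{prop_(i):mutual_independence}.

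I do not expect a genuine obstacle here: the argument is clean measure theory, and the only care needed is in the two limiting passages (continuity of $\Lambda$ from above for the marginal identities, from below for shrinking the thresholds to $\textbf{0}$), the emptiness checks $A_g \cap S_h = \emptyset$, and the routine extension of the product formula off the positive orthant. The one point worth a remark is that the strict positivity of the faces in \eqref{eq:support_measure} corresponds to the case where each block exponent measure charges the open orthant $]0,\infty[^{d_g}$ --- for instance under Condition \ref{ass:A}, or whenever those measures admit Lebesgue densities --- whereas the bare argument only gives concentration on $\bigcup_{g} \{\textbf{0}\} \times \dots \times ([0,\infty)^{d_g} \setminus \{\textbf{0}\}) \times \dots \times \{\textbf{0}\}$; the quantitative statement in \ref{prop_(iii):measure} holds in all cases without this caveat.
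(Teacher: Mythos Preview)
Your proposal is correct and follows essentially the same scheme as the paper's proof: both establish the same cycle of implications, with \ref{prop_(ii):blockwise} $\Rightarrow$ \ref{prop_(iii):measure} via two-set inclusion--exclusion to get $\Lambda(A_g \cap A_h)=0$, and \ref{prop_(iii):measure} $\Rightarrow$ \ref{prop_(i):mutual_independence} by showing the exponent splits as $\sum_g \Lambda(A_g)$ (the paper uses full inclusion--exclusion with vanishing higher-order terms, you use the equivalent disjointness $A_g \cap S_h = \emptyset$). Your closing remark that the bare argument only yields concentration on $\bigcup_g \{\textbf{0}\}\times\cdots\times([0,\infty)^{d_g}\setminus\{\textbf{0}\})\times\cdots\times\{\textbf{0}\}$ rather than the strictly open faces $]0,\infty[^{d_g}$ is a valid observation the paper does not make.
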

	These conditions generalize  straightforwardly those stated in Proposition 5.24 of \cite{resnick2008extreme} (see Exercise 5.5.1 of the book aforementioned or the Lemma in \cite{strokorb2020extremal}).
\begin{proof}[Proof of Proposition \ref{rm:rv_independent}]
We will establish the result proceeding as $\ref{prop_(iii):measure} \implies \ref{prop_(i):mutual_independence} \implies \ref{prop_(ii):blockwise}  \implies \ref{prop_(iii):measure}$ where we directly have $\ref{prop_(i):mutual_independence} \implies \ref{prop_(ii):blockwise}$. Now for $\ref{prop_(iii):measure} \implies \ref{prop_(i):mutual_independence}$, suppose $\Lambda$ concentrates on the set \eqref{eq:support_measure}. Then for $\textbf{x} > 0$, noting $A_g(\textbf{x}) = \{ \textbf{y} \in E, \exists a \in O_g, y^{(a)} > x^{(a)} \}$ for $g \in \{1,\dots,G\}$, we obtain
\begin{align*}
	-\ln H(\textbf{x}) &= \Lambda(E \setminus [0,\textbf{x}]) = \Lambda \left( \bigcup_{g=1}^G A_g(\textbf{x}) \right) \\ &= \sum_{g=1}^G \Lambda(A_g(\textbf{x})) + \sum_{g=2}^G (-1)^{g+1} \sum_{1 \leq i_1 < i_2 < \dots < i_l \leq G} \Lambda(A_{i_1}(\textbf{x}) \cap \dots \cap A_{i_l}(\textbf{x})),
\end{align*}
so that because of Equation \eqref{eq:support_measure}, 
\begin{equation*}
	-\ln H(\textbf{x}) = \sum_{g=1}^G \Lambda(A_g(\textbf{x})),
\end{equation*}
and we have $H(\textbf{x}) = \Pi_{g=1}^G \exp \left\{ - \Lambda\left( \{ \textbf{y} \in E, \exists a \in O_g, y^{(a)} > x^{(a)} \} \right) \right\} = \Pi_{g=1}^G H^{(O_g)}(\textbf{x}^{(O_g)}).$
 
Thus $H$ is a written as a product of the $G$ distributions corresponding to random vectors $\textbf{X}^{(O_1)}, \dots, \textbf{X}^{(O_G)}$, as desired.

It remains to show $\ref{prop_(ii):blockwise} \implies \ref{prop_(iii):measure}$. Set $Q^{(O_g)}(\textbf{x}^{(O_g)}) = - \ln \mathbb{P}\{ \textbf{X}^{(O_g)} \leq \textbf{x}^{(O_g)} \}$ for $g \in \{1,\dots,G\}$. We have for $\textbf{x} > 0$ that blockwise independence implies, with $ g\neq h$,
\begin{equation*}
	Q^{(O_g)}(\textbf{x}^{(O_g)}) + Q^{(O_h)}(\textbf{x}^{(O_h)}) = -\ln \mathbb{P}\{ \textbf{X}^{(O_g)} \leq \textbf{x}^{(O_g)}, \textbf{X}^{(O_h)} \leq \textbf{x}^{(O_h)} \}.
\end{equation*}
Since $H(\textbf{x}) = \exp \{ -\Lambda (E \setminus [\boldsymbol{0},\textbf{x}] ) \}$ for $\textbf{x} > 0$, we have

\begin{align*}
	Q^{(O_g)}(\textbf{x}^{(O_g)}) + Q^{(O_h)}(\textbf{x}^{(O_h)}) &= \Lambda(\{\textbf{y}, \exists a \in O_g, y^{(a)} > x^{(a)}\} \cup \{\textbf{y}, \exists b \in O_h, y^{(b)} > x^{(b)}\}) \\
	&= \Lambda(\{\textbf{y}, \exists a \in O_g, y^{(a)} > x^{(a)}\}) + \Lambda(\{\textbf{x}, \exists b \in O_h, y^{(b)} > x^{(b)}\}) \\ & - \Lambda(\{\textbf{y}, \exists a \in O_g, \exists b \in O_h, y^{(a)} > x^{(a)}, y^{(b)} > x^{(b)}\}) \\
	& = Q^{(O_g)}(\textbf{x}^{(O_g)}) + Q^{(O_h)}(\textbf{x}^{(O_h)}) \\ &- \Lambda(\{\textbf{y}, \exists a \in O_g, \exists b \in O_h, y^{(a)} > x^{(a)}, y^{(b)} > x^{(b)}\}),
\end{align*}

and thus
\begin{equation*}
\Lambda(\{\textbf{y}, \exists a \in O_g, \exists b \in O_h, y^{(a)} > x^{(a)}, y^{(b)} > x^{(b)}\}) = 0,
\end{equation*}
so that \ref{prop_(iii):measure} holds. This is equivalent to $\Lambda$ concentrates on the set in Equation \eqref{eq:support_measure}.
\end{proof}
If $\textbf{X}$ is a random vector with multivariate extreme value distribution $H$ then its extreme value copula, denoted as, $C_\infty$ is written as:
    \begin{equation*}
        C_\infty(\textbf{u}) = \exp\left\{-L\left(-\ln(u^{(1)}), \dots, -\ln(u^{(d)})\right)\right\},
    \end{equation*}
where $L$ is the stable tail dependence function. This function captures the tail dependence structure of the random vector and can be expressed as a specific integral with respect to the exponent measure (we refer to Section 8 of \cite{beirlant2004statistics}). In the context of AI-block models, the tail dependence function takes the following form:
\begin{equation}
\label{eq:mutual_indep_stdf}
{L\left(z^{(1)}, \dots, z^{(d)}\right) = \sum_{g=1}^G L^{(O_g)}\left( \textbf{z}^{(O_g)} \right), \quad \textbf{z} \in [0,\infty)^d,}
\end{equation}
where $L^{(O_1)}, \dots, L^{(O_G)}$ are the corresponding stable tail dependence functions with copulae $C_\infty^{(O_1)}, \dots, C_\infty^{(O_G)}$, respectively. This model is a specific form of the nested extreme value copula, as mentioned in the remark below and discussed in further detail in \cite{hofert2018hierarchical}.
\begin{remark}
Equation \eqref{eq:mutual_indep_stdf} can be rewritten as
	\begin{equation*}
L(\normalfont{\textbf{z}}) = L_{\Pi} \left(L^{(O_1)}\left(z^{(O_1)}\right), \dots, L^{(O_G)}\left(z^{(O_G)}\right) \right),
	\end{equation*}
where $L_{\Pi}(z^{(1)},\dots,z^{(G)}) = \sum_{g=1}^G z^{(g)}$ is a stable tail dependence function corresponding to asymptotic independence. According to Proposition \ref{prop:Cevt}, $C_\infty$ is an extreme value copula. Therefore, it follows that $C_\infty$, which has the representation
	\begin{equation*}
	C_\infty(\normalfont{\textbf{u}}) = C_\Pi \left(C_\infty^{(O_1)}(\normalfont{\textbf{u}}^{(O_1)}), \dots, C_\infty^{(O_G)}(\normalfont{\textbf{u}}^{(O_G)}) \right), \quad C_\Pi = \Pi_{g=1}^G u^{(g)},
	\end{equation*}
	is also a nested extreme value copula, as defined in \cite{hofert2018hierarchical}.
\end{remark}
Equation \eqref{eq:mutual_indep_stdf} can be restricted to the simplex, allowing us to express the stable tail dependence function in terms of the Pickands dependence function. Specifically, the Pickands dependence function $A$ can be written as a convex combination of the Pickands dependence functions $\mathpzc{A}^{(O_1)}, \dots, \mathpzc{A}^{(O_G)}$ as follows:
\begin{align}
	\mathpzc{A}(t^{(1)}, \dots, t^{(d)}) &= \frac{1}{z^{(1)}+\dots+z^{(d)}} \left[ \sum_{g=1}^G (z^{(i_{g,1})} + \dots + z^{(i_{g,d_g})}) \mathpzc{A}^{(O_g)}(\textbf{t}^{(O_g)}) \right] \nonumber \\ 
	&= \sum_{g=1}^G w^{(O_g)}(\textbf{t}) \mathpzc{A}^{(O_g)}(\textbf{t}^{(O_g)}) =: \mathpzc{A}^{(O)}(t^{(1)}, \dots, t^{(d)}), \label{eq:conv_pick}
\end{align}
with $t^{(j)} = z^{(j)} / (z^{(1)}+\dots+z^{(d)})$ for $j \in \{2,\dots, d\}$ and $t^{(1)} = 1- (t^{(2)} + \dots + t^{(d)})$, $w^{(O_g)}(\textbf{t})  = (z^{(i_{g,1})} + \dots + z^{(i_{g,d_g})}) / (z^{(1)} + \dots + z^{(d)})$ for $g \in \{2, \dots, G\}$ and $w^{(O_1)}(\textbf{t}) = 1-(w^{(O_2)}(\textbf{t})+\dots+w^{(O_G)}(\textbf{t}))$, $\textbf{t}^{(O_g)} = (t^{(i_{g,1})},\dots, t^{(i_{g,d_g})})$ where $t^{(i_{g,\ell})} = z^{(i_{g,\ell})} / (z^{(i_{g,1})} + \dots + z^{(i_{g,d_g})})$ and $(i_{g,\ell})$ designates the $\ell$th variable in the $g$th cluster for $\ell \in \{1, \dots, d_g\}$ and $g \in \{1, \dots, G\}$. As a convex combination of Pickands dependence functions, $\mathpzc{A}$ is itself a Pickands dependence function (see \cite[Page 123]{falk2010laws}).

In the context of independence between extreme random variables, it is well-known that the inequality $\mathpzc{A}(\textbf{t}) \leq 1$ holds for $\textbf{t} \in \Delta_{d-1}$, where $\mathpzc{A}$ is the Pickands dependence function and equality stands if and only if the random variables are independent. This result extends to the case of random vectors, with the former case being a special case where $d_1 = \dots = d_G = 1$.
\begin{proposition}
	\label{prop:ineq}
	Consider a random vector $\normalfont{\textbf{X}} \in \mathbb{R}^d$ with copula $C_\infty$ and Pickands dependence function $\mathpzc{A}$. Let $\mathpzc{A}^{(O)}$ be as defined in \eqref{eq:conv_pick}. For all $\normalfont{\textbf{t}} \in \Delta_{d-1}$, we have:
	\begin{equation*}
		\left(\mathpzc{A}^{(O)} - \mathpzc{A}\right) (\normalfont{\textbf{t}}) \geq 0,
	\end{equation*}
	with equality if and only if $\normalfont{\textbf{X}}^{(O_1)}, \dots, \normalfont{\textbf{X}}^{(O_G)}$ are independent.
\end{proposition}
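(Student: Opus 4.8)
The plan is to translate the statement about Pickands dependence functions into an inequality between stable tail dependence functions, and then into a subadditivity property of the exponent measure. Since $A$ and $A^{(O)}$ depend only on the extreme value copula of $\textbf{X}$, there is no loss of generality in assuming that $\textbf{X}$ has standard unit Fréchet margins; then its exponent measure $\Lambda$ on $E = [0,\infty]^d \setminus \{\textbf{0}\}$ is $(-1)$-homogeneous and $-\ln\mathbb{P}\{\textbf{X}\le\textbf{x}\} = \Lambda(E \setminus [0,\textbf{x}])$. Using the order-one homogeneity of $L$ and of each $L^{(O_g)}$ together with \eqref{eq:conv_pick}, the inequality $(A^{(O)}-A)(\textbf{t})\ge 0$ for all $\textbf{t}\in\Delta_{d-1}$ is equivalent to $L(\textbf{z}) \le \sum_{g=1}^G L^{(O_g)}(\textbf{z}^{(O_g)})$ for all $\textbf{z}\in[0,\infty)^d$; the right-hand side, which we denote $L^{(O)}(\textbf{z})$, is the function of \eqref{eq:mutual_indep_stdf} whose restriction to $\Delta_{d-1}$ is $A^{(O)}$.

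For the inequality, I would express $L$ and the $L^{(O_g)}$ through $\Lambda$. For a threshold vector $\textbf{y}=(1/z^{(1)},\dots,1/z^{(d)})\in(0,\infty]^d$ put $B_g(\textbf{y}) = \{\textbf{u}\in E : u^{(a)} > y^{(a)}\ \text{for some}\ a\in O_g\}$. Freezing the coordinates outside $O_g$ at $+\infty$ in $\mathbb{P}\{\textbf{X}\le\cdot\}$ yields the law of $\textbf{X}^{(O_g)}$, so that $L^{(O_g)}(\textbf{z}^{(O_g)}) = \Lambda(B_g(\textbf{y}))$, while $L(\textbf{z}) = \Lambda\big(\bigcup_{g=1}^G B_g(\textbf{y})\big)$. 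Finite subadditivity of the measure $\Lambda$ then gives $\Lambda\big(\bigcup_g B_g(\textbf{y})\big) \le \sum_g \Lambda(B_g(\textbf{y}))$, that is $L(\textbf{z}) \le L^{(O)}(\textbf{z})$, which is the claimed bound.

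For the equality case, inclusion--exclusion (or an induction on $G$ based on $\Lambda(B_1\cup\cdots\cup B_G) = \Lambda(B_1) + \Lambda(B_2\cup\cdots\cup B_G) - \Lambda(B_1\cap(B_2\cup\cdots\cup B_G))$) shows that $\sum_g\Lambda(B_g(\textbf{y})) - \Lambda\big(\bigcup_g B_g(\textbf{y})\big) = 0$ if and only if $\Lambda(B_g(\textbf{y})\cap B_h(\textbf{y})) = 0$ for all $g\neq h$. If $\textbf{X}^{(O_1)},\dots,\textbf{X}^{(O_G)}$ are independent then $\mathbb{P}\{\textbf{X}\le\textbf{x}\} = \prod_g\mathbb{P}\{\textbf{X}^{(O_g)}\le\textbf{x}^{(O_g)}\}$, hence $L\equiv L^{(O)}$ and $(A^{(O)}-A)\equiv 0$. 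Conversely, suppose $(A^{(O)}-A)(\textbf{t}_0) = 0$ for some $\textbf{t}_0\in\Delta_{d-1}$ with strictly positive entries, for instance the barycenter $(1/d,\dots,1/d)$, the relevant case for $\SECO(O)$; then $\Lambda(B_g(\textbf{y}_0)\cap B_h(\textbf{y}_0)) = 0$ for $\textbf{y}_0 = (1/t_0^{(1)},\dots,1/t_0^{(d)}) > \textbf{0}$ and all $g\neq h$. Since $B_g(\textbf{y})$ is decreasing in $\textbf{y}$ and positively homogeneous, $B_g(t\textbf{y}) = t\,B_g(\textbf{y})$, while $\Lambda$ is $(-1)$-homogeneous, for any $\textbf{y}>\textbf{0}$ the choice $t = \min_j y^{(j)}/y_0^{(j)} > 0$ gives $B_g(\textbf{y}) \subseteq t\,B_g(\textbf{y}_0)$ for every $g$, whence $\Lambda(B_g(\textbf{y})\cap B_h(\textbf{y})) \le t^{-1}\Lambda(B_g(\textbf{y}_0)\cap B_h(\textbf{y}_0)) = 0$ for all $g\neq h$. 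This is precisely the statement that $\Lambda$ charges only the set appearing in \eqref{eq:support_measure}, i.e. item \ref{prop_(iii):measure} of Proposition \ref{rm:rv_independent}, which that proposition identifies with the independence of $\textbf{X}^{(O_1)},\dots,\textbf{X}^{(O_G)}$.

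The main difficulty is the equality part, not the inequality: one must argue that the vanishing of $\Lambda(B_g\cap B_h)$ along a single interior direction propagates, through the homogeneity of $\Lambda$ and of the sets $B_g$, to every threshold vector, so that Proposition \ref{rm:rv_independent} can be applied (along boundary directions of $\Delta_{d-1}$ equality can hold without independence, which is why the interior point $(1/d,\dots,1/d)$ is the natural one). For the inequality alone there is a shorter route --- extreme value distributions are positively associated, so $H(\textbf{x}) \ge \prod_g H^{(O_g)}(\textbf{x}^{(O_g)})$ and applying $-\ln$ gives the bound --- but it does not deliver the equality characterization as transparently.
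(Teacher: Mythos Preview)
Your argument is correct and close in spirit to the paper's, but packaged differently. For the inequality the paper offers two routes: (i) subadditivity of $L$ deduced from its convexity and order-one homogeneity, and (ii) positive association of extreme value distributions, yielding $H(\textbf{x})\ge\prod_g H^{(O_g)}(\textbf{x}^{(O_g)})$. Your measure-theoretic version, finite subadditivity of $\Lambda$ on the union $\bigcup_g B_g(\textbf{y})$, is exactly (i) rewritten at the level of the exponent measure; you also note (ii) at the end. For the equality characterization the paper is terse: it simply records that equality in the association inequality with indicator functions holds if and only if $\textbf{X}^{(O_1)},\dots,\textbf{X}^{(O_G)}$ are independent, relying implicitly on the Takahashi--Ferreira one-point criterion restated in the supplement as Proposition~\ref{prop:takahashi}. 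Your treatment is more self-contained: you show directly, via the homogeneity $B_g(t\textbf{y})=tB_g(\textbf{y})$ and the $(-1)$-homogeneity of $\Lambda$, that vanishing of $\Lambda(B_g\cap B_h)$ at one interior threshold propagates to all $\textbf{y}>\textbf{0}$, and then invoke Proposition~\ref{rm:rv_independent}. This in fact reproves the single-point direction of Proposition~\ref{prop:takahashi} and yields the slightly sharper statement that equality at any interior $\textbf{t}_0$ (in particular the barycenter, the case relevant for $\SECO$) already forces independence; the paper obtains the same conclusion but by deferring to that auxiliary result.
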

We provide two methods for establishing this result: the first leverages the convexity and homogeneity of order one of the stable tail dependence function, while the second takes advantage of the associativity of random vectors having extreme value distribution $H$.
\begin{proof}[Proof of Proposition \ref{prop:ineq}]
	For the first method, the stable tail dependence function $L$ is subadditive as an homogeneous convex function under a cone, i.e., $$L(\textbf{x} + \textbf{y}) \leq L(\textbf{x}) + L(\textbf{y}),$$ for every $\textbf{x}, \textbf{y} \in [0, \infty)^d$. In particular, we obtain by induction on $G$ $$L\left(\sum_{g=1}^G \textbf{x}^{(g)}\right) \leq \sum_{g=1}^G L(\textbf{x}^{(g)}),$$ where $\textbf{x}^{(g)} \in [0, \infty)^d$ and $g \in \{1,\dots, G\}$. Consider now $\textbf{z}^{(O_g)} = (\textbf{0}, z^{(i_{g,1})}, \dots, z^{(i_{g,d_g})}, \textbf{0})$, we directly obtain using the equation above
\begin{equation*}
	L(\textbf{z}) = L\left(\sum_{g=1}^G \textbf{z}^{(O_g)}\right) \leq \sum_{g=1}^G L(\textbf{z}^{(O_g)}) = \sum_{g=1}^G L^{(O_g)} (z^{(i_{g,1})}, \dots, z^{(i_{g,d_g})}).
\end{equation*}
Translating the above inequality in terms of Pickands dependence function results on
\begin{align*}
	\mathpzc{A}(\textbf{t}) &\leq \sum_{g=1}^G \frac{1}{z^{(1)} + \dots + z^{(d)}} L^{(O_g)}(z^{(i_{g,1})}, \dots, z^{(i_{g,d_g})}) \\ &= \sum_{g=1}^G \frac{z^{(i_{g,1})} + \dots + z^{(i_{g,d_g}})}{z^{(1)} + \dots + z^{(d)}} \mathpzc{A}^{(O_g)}(t^{(i_{g,1})}, \dots, t^{(i_{g,d_g})}),
\end{align*}
where $t^{(i)} = z^{(i)} / (z^{(1)} + \dots + z^{(d)})$. Hence the result.

We can also prove this result by using the associativity of extreme-value distributions (see \cite[Proposition 5.1]{marshall1983} or \cite[Section 5.4.1]{resnick2008extreme}), i.e.,
\begin{equation*}
	\mathbb{E}\left[f(\textbf{X}) g(\textbf{X}) \right] \geq \mathbb{E}\left[ f(\textbf{X})\right] \mathbb{E} \left[g(\textbf{X}) \right],
\end{equation*}
for every increasing (or decreasing) functions $f,g$. By induction on $G \in \mathbb{N}_*$, 
\begin{equation}
	\label{eq:association}
	\mathbb{E}\left[\Pi_{g=1}^G f^{(g)}(\textbf{X}) \right] \geq \Pi_{g=1}^G \mathbb{E}\left[ f^{(g)}(\textbf{X}) \right].
\end{equation}	
Take $f^{(g)}(\textbf{x}) = \mathds{1}_{\{]-\infty, \textbf{x}^{(O_g)}]\}}$ for each $g \in \{1, \dots G\}$, thus Equation \eqref{eq:association} gives
\begin{equation*}
	C(H^{(1)}(x^{(1)}), \dots, H^{(d)}(x^{(d)})) \geq \Pi_{g=1}^G C^{(O_g)} \left(H^{(O_g)}\left(\textbf{x}^{(O_g)}\right) \right),
\end{equation*}
which can be restated in terms of stable tail dependence function as 
\begin{equation*}
	L(\textbf{z}) \leq \sum_{g=1}^G L^{(O_g)}(\textbf{z}^{(O_g)}).
\end{equation*}
We obtain the statement expressing this inequality with Pickands dependence function. Finally, notice that \eqref{eq:association} with $f^{(g)}(\textbf{x}) = \mathds{1}_{\{]-\infty, \textbf{x}^{(O_g)}]\}}$ for each $g \in \{1, \dots G\}$ holds as an equality if and only if $\textbf{X}^{(O_1)},\dots,\textbf{X}^{(O_G)}$ are independent random vectors. 
\end{proof}

In the following paragraph, we give another proof of the extension of the results found in \cite{Takahashi1987SomePO, takahashi1994asymptotic} made by \cite[Proposition 2.1]{FERREIRA2011586}. Before going into details, we recall some useful expression of the dependence structure of extreme closely related to the notion of regular variation.

{Let $\textbf{X}$ be a regularly varying random vector in $\mathbb{R}^d_+$ with exponent measure $\Lambda$ which is $(-\alpha)$-homogeneous, i.e. for $y > 0$ and $A$ separated from $\boldsymbol{0}$, that is there exists an open set $U$ such that $\boldsymbol{0} \in U$ and $U^c \subset A$, we have}
\begin{equation*}
{\Lambda(yA) = y^{-\alpha} \Lambda(A).}
\end{equation*}

{Using the homogeneity of the exponent measure, we may define a probability measure $\Phi$ on $\Theta = S_d \cap [\boldsymbol{0}, \boldsymbol{\infty})$ where $S_d = \{ \textbf{x} \in \mathbb{R}^d, \,||\textbf{x}|| = 1 \}$ called the spectral measure associated to the norm $||\cdot||$ and defined by}
\begin{equation*}
	\Phi(B) = \Lambda \left( \textbf{z} \in E : || \textbf{z} || > 1 , \, \textbf{z} ||\textbf{z}||^{-1} \in B \right)
\end{equation*}
for any Borel subset $B$ of $\Theta$ (for a proper introduction to these notions, see \cite[Section 5.1]{resnick2008extreme} or \cite[Section 2.2]{kulik2020heavy}). The measure $\Phi$ is called the spectral measure. It is uniquely determined by the exponent measure $\Lambda$ and the chosen norm. The homogeneity of $\Lambda$ implies :
\begin{equation*}
 \Lambda \left( \textbf{z} \in E : || \textbf{z} || > r , \, \textbf{z} ||\textbf{z}||^{-1} \in B \right) = r^{-1} \Phi(B),
\end{equation*}
for $0 < r < \infty$.

\begin{proposition}
    \label{prop:takahashi}
Let $\normalfont{\textbf{X}}$ be a regularly varying random vector in $\mathbb{R}^d_+$ with exponent measure $\Lambda$. Consider $O = \{O_1,\dots,O_g\}$ be a partition of $\{1,\dots,d\}$, then the following are equivalent:
    \begin{enumerate}[label=\textcolor{frenchblue}{(\roman*)}]
        \item  Let $\Lambda^{(O_g)}$ be the restriction of the exponent measure to $\mathbb{R}^{(O_g)}_+$, we have
        \begin{equation*}
            \Lambda = \sum_{g=1}^G \delta_0 \otimes \dots \otimes \Lambda^{(O_g)} \otimes \dots \otimes \delta_0.
        \end{equation*} \label{takashi_i}
        \item The spectral measure $\Phi$ associated to the exponent measure $\Lambda$ verifies
        \begin{equation}
            \label{eq:takahashi_phi_pi}
             \Phi = \sum_{g=1}^G \delta_0 \otimes \dots \otimes \Phi^{(O_g)} \otimes \dots \otimes \delta_0 =: \Phi_\Pi,
        \end{equation}
         where $\Phi^{(O_g)}(B) := \Phi(\Theta^{(O_g)} \cap B)$ where $B$ is a borel set of $\Theta$ and $$\Theta^{(O_g)} = \left\{ \normalfont{\textbf{w}} \in \Theta, \; w^{(j)} > 0 \textrm{ if and only if } j \in O_g \right\}$$ for $g \in \{1,\dots,G\}$. \label{takashi_ii}
        \item There exists a $\normalfont{\textbf{v}} \in (0,\infty)^d$ such that
        \begin{equation}
            \label{eq:takahashi_identity}
          \int_{\Theta} \bigvee_{j=1}^d w^{(j)} v^{(j)} \Phi(d \normalfont{\textbf{w}}) = \sum_{g=1}^G \int_{\Theta^{(O_g)}} \bigvee_{j \in O_g} w^{(j)} v^{(j)} \Phi^{(O_g)}(d \normalfont{\textbf{w}}^{(O_g)}).
        \end{equation} \label{takashi_iii}
    \end{enumerate}    
    \end{proposition}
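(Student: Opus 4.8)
The plan is to prove the cyclic chain of implications $\ref{takashi_i} \Rightarrow \ref{takashi_ii} \Rightarrow \ref{takashi_iii} \Rightarrow \ref{takashi_i}$. The passage from \ref{takashi_i} to \ref{takashi_ii} is essentially a change-of-variables computation: the spectral measure $\Phi$ is obtained from $\Lambda$ by restricting to $\{\|\textbf{z}\|>1\}$ and pushing forward under $\textbf{z}\mapsto \textbf{z}/\|\textbf{z}\|$. If $\Lambda = \sum_{g=1}^G \delta_0\otimes\cdots\otimes\Lambda^{(O_g)}\otimes\cdots\otimes\delta_0$, then $\Lambda$ charges only the coordinate subspaces $\{\textbf{0}\}^{d_1}\times\cdots\times[0,\infty]^{d_g}\times\cdots\times\{\textbf{0}\}^{d_G}$, so the radial-angular decomposition applied blockwise gives $\Phi = \sum_{g=1}^G \delta_0\otimes\cdots\otimes\Phi^{(O_g)}\otimes\cdots\otimes\delta_0 = \Phi_\Pi$, where one must take a little care that the normalization $\|\cdot\|$ restricted to a block containing a point $\textbf{w}^{(O_g)}$ with all other coordinates zero agrees with the full-dimensional norm of the embedded vector (true for $\ell_p$ norms). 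For the reverse bookkeeping one simply reads the same identity backwards, using that $\Lambda$ is recovered from $\Phi$ via $\Lambda(\{\|\textbf{z}\|>r,\ \textbf{z}/\|\textbf{z}\|\in B\}) = r^{-1}\Phi(B)$ together with homogeneity; so \ref{takashi_ii} $\Rightarrow$ \ref{takashi_i} is immediate and I would in fact fold it in to shorten the cycle.

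For $\ref{takashi_ii}\Rightarrow\ref{takashi_iii}$, I would use the well-known integral representation of the stable tail dependence function in terms of the spectral measure, namely $L(\textbf{z}) = \int_\Theta \bigvee_{j=1}^d w^{(j)} z^{(j)}\,\Phi(d\textbf{w})$ (after the appropriate normalization of $\Phi$), and likewise $L^{(O_g)}(\textbf{z}^{(O_g)}) = \int_{\Theta^{(O_g)}} \bigvee_{j\in O_g} w^{(j)} z^{(j)}\,\Phi^{(O_g)}(d\textbf{w}^{(O_g)})$. Plugging $\Phi = \Phi_\Pi$ into the first integral and splitting the union-of-blocks support, the ``$\vee$'' over all $d$ coordinates reduces on the $g$-th piece of the support (where only the $O_g$-coordinates of $\textbf{w}$ are nonzero) to the ``$\vee$'' over $O_g$, yielding $L(\textbf{z}) = \sum_{g=1}^G L^{(O_g)}(\textbf{z}^{(O_g)})$ for all $\textbf{z}$; specializing to $\textbf{z}=\textbf{v}$ for any fixed $\textbf{v}>\textbf{0}$ (indeed for $\textbf{v}=(1,\dots,1)$) gives \eqref{eq:takahashi_identity}.

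The interesting direction, and the main obstacle, is $\ref{takashi_iii}\Rightarrow\ref{takashi_i}$: from a single point $\textbf{v}$ at which the additivity identity holds, one must deduce a global statement about the support of $\Lambda$. The strategy is to define the nonnegative quantity $\Delta(\textbf{z}) := \sum_{g=1}^G L^{(O_g)}(\textbf{z}^{(O_g)}) - L(\textbf{z}) = \int_\Theta\big[\sum_g \bigvee_{j\in O_g}w^{(j)}z^{(j)} - \bigvee_{j=1}^d w^{(j)}z^{(j)}\big]\Phi(d\textbf{w})$. By subadditivity of the max over a partition ($\bigvee_{j}a_j \le \sum_g \bigvee_{j\in O_g}a_j$) the integrand is pointwise $\ge 0$, so $\Delta(\textbf{v})=0$ forces the integrand to vanish $\Phi$-almost everywhere on $\{\textbf{v}>\textbf{0}\}$; this means that for $\Phi$-a.e.\ $\textbf{w}$, at most one block $O_g$ contributes a strictly positive term $\bigvee_{j\in O_g}w^{(j)}v^{(j)}$, i.e.\ $\textbf{w}$ has nonzero coordinates in only one block. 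Hence $\Phi$ concentrates on the union $\bigcup_g \{\textbf{0}\}\times\cdots\times\Theta^{(O_g)}\times\cdots\times\{\textbf{0}\}$, which is exactly \eqref{eq:takahashi_phi_pi}, and translating back to $\Lambda$ via homogeneity gives \ref{takashi_i}. The delicate point to get right is the reduction from ``$\Delta(\textbf{v})=0$ for one $\textbf{v}$'' to ``the integrand vanishes a.e.'': one needs $v^{(j)}>0$ for all $j$ (so the indicator of positivity of $w^{(j)}v^{(j)}$ is exactly that of $w^{(j)}$), and one invokes that a nonnegative integrand with zero integral is zero a.e.; I would also remark that the existence of \emph{some} admissible $\textbf{v}$ with $0<H^{(O_g)}(\textbf{v}^{(O_g)})<1$ can always be arranged, which is where this matches the statement of \cite[Proposition 2.1]{FERREIRA2011586}.
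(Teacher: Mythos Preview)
Your treatment of \ref{takashi_i}$\,\Leftrightarrow\,$\ref{takashi_ii} and \ref{takashi_ii}$\,\Rightarrow\,$\ref{takashi_iii} is fine and matches the paper, which dispatches these as ``from definitions'' and ``trivial''.

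The gap is in \ref{takashi_iii}$\,\Rightarrow\,$\ref{takashi_i}. You set $\Delta(\textbf{v})=\sum_g L^{(O_g)}(\textbf{v}^{(O_g)})-L(\textbf{v})$ and claim that \eqref{eq:takahashi_identity} forces $\Delta(\textbf{v})=0$. But the right-hand side of \eqref{eq:takahashi_identity} is $\sum_g\int_{\Theta^{(O_g)}}\bigvee_{j\in O_g}w^{(j)}v^{(j)}\,\Phi^{(O_g)}(d\textbf{w})$, where by the statement's own definition $\Phi^{(O_g)}$ is the \emph{restriction} of $\Phi$ to the face $\Theta^{(O_g)}\subset\Theta$, not the spectral measure of the marginal $\textbf{X}^{(O_g)}$. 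The marginal stable tail dependence function is $L^{(O_g)}(\textbf{v}^{(O_g)})=\int_{\Theta}\bigvee_{j\in O_g}w^{(j)}v^{(j)}\,\Phi(d\textbf{w})$, an integral over \emph{all} of $\Theta$. These two quantities coincide precisely when $\Phi$ already charges only $\bigcup_g\Theta^{(O_g)}$, i.e.\ under \ref{takashi_ii}, which is what you are trying to prove. So the passage from \eqref{eq:takahashi_identity} to $\Delta(\textbf{v})=0$ is circular.

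The paper's route for \ref{takashi_iii}$\,\Rightarrow\,$\ref{takashi_ii} avoids this by working with the nonnegative measure $\Phi-\Phi_\Pi$ rather than with a nonnegative integrand against $\Phi$. Since $\Phi_\Pi(B)=\sum_g\Phi(B\cap\Theta^{(O_g)})\le\Phi(B)$ for every Borel $B$, and since on each face $\Theta^{(O_g)}$ one has $\bigvee_{j\in O_g}w^{(j)}v^{(j)}=\bigvee_{j=1}^d w^{(j)}v^{(j)}$, the identity \eqref{eq:takahashi_identity} reads $\int_\Theta\bigvee_j w^{(j)}v^{(j)}\,d(\Phi-\Phi_\Pi)=0$. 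The integrand is continuous and strictly positive on the compact $\Theta$ (here one does need $\textbf{v}>\textbf{0}$, as you rightly flag), hence bounded below by some $c>0$, which forces $(\Phi-\Phi_\Pi)(B)=0$ for all $B$. Your ``nonnegative object integrates to zero, hence is zero'' instinct is exactly right; it just has to be applied to the positive measure $\Phi-\Phi_\Pi$ and the strictly positive function $\bigvee_j w^{(j)}v^{(j)}$, not to $\Phi$ and the function $\sum_g\bigvee_{j\in O_g}w^{(j)}v^{(j)}-\bigvee_j w^{(j)}v^{(j)}$.
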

    \begin{proof}[Proof of Proposition \ref{prop:takahashi}]
        The equivalence between \ref{takashi_i} and \ref{takashi_ii} falls down from definitions. The implication \ref{takashi_ii} $\implies$ \ref{takashi_iii} is trivial. We show now \ref{takashi_iii} $\implies$ \ref{takashi_ii}  Notice that for every Borel set $B$ of $\Theta$, we have
        \begin{equation*}
          \Phi(B) = \sum_{g=1}^G \Phi(B \cap \Theta^{(O_g)}) + \Phi\left(B \cap (\Theta \setminus \cup_{g=1}^G \Theta^{(O_g)}) \right) \geq  \sum_{g=1}^G \Phi(B \cap \Theta^{(O_g)}) = \Phi_\Pi(B).
        \end{equation*}
       The identity in Equation \eqref{eq:takahashi_identity} can be rewritten as
        \begin{equation*}
     		\int_{\Theta} \bigvee_{j=1}^d w^{(j)} v^{(j)} (\Phi - \Phi_{\Pi})(d\textbf{w}) = 0.
        \end{equation*}
       From above, we know that $(\Phi - \Phi_{\Pi})$ defined a positive measure. For every Borel set $B$ of $\Theta$, we have
        \begin{equation*}
           \int_B \bigvee_{j=1}^d w^{(j)} v^{(j)} (\Phi - \Phi_{\Pi})(d\textbf{w}) \leq  \int_{\Theta} \bigvee_{j=1}^d w^{(j)} v^{(j)} (\Phi - \Phi_{\Pi})(d\textbf{w}) = 0.
        \end{equation*}
   Since the function $\textbf{w} \mapsto \bigvee_{j=1}^d w^{(j)}v^{(j)}$ is strictly positive, continuous and defined on a compact set, we have that $ \bigvee_{j=1}^d w^{(j)}v^{(j)} \geq c$ for a certain constant $c$ strictly positive and we obtain
        \begin{equation*}
             c (\Phi - \Phi_{\Pi})(B) \leq \int_B \bigvee_{j=1}^d w^{(j)} v^{(j)} (\Phi - \Phi_{\Pi})(d\textbf{w}) = 0.
        \end{equation*}
     The following identity is obtained 
        \begin{equation*}
            \Phi(B) = \Phi_\Pi(B),
        \end{equation*}
     since $B$ is taken arbitrary from the Borelian of $\Theta$, we conclude.
    \end{proof}
 One can notice that the integrals defined in \eqref{eq:takahashi_identity} can be rewritten with the help of stable tail dependence function, that is
    \begin{equation*}
    L\left(v^{(1)}, \dots, v^{(d)}\right) = \sum_{g=1}^G L^{(O_g)}\left( \textbf{v}^{(O_g)} \right), \quad \textbf{v} \in [0,\infty)^d,
\end{equation*}
   since for every $\textbf{v} \in [0,\infty)^d$
    \begin{equation*}
        L(\textbf{v}) = \int_{\Theta} \bigvee_{j=1}^d w^{(j)}v^{(j)} \Phi(d\textbf{w}).
    \end{equation*}
    \subsection{Additional results of Section \ref{sec:estimation}}
    \label{subsec:asympto}

    To establish the strong consistency of the estimator $\hat{\nu}_{n,m}$ in \eqref{eq:est_mado}, certain conditions on the mixing coefficients must be satisfied.
	\begin{Assumption}{$\mathcal{C}$}
	    \label{ass:consistency}
	   	Let $m_n = o (n)$. The series $\sum_{n \geq 1} \beta(m_n)$ is convergent, where $\beta$ is defined in Section \ref{eq:beta_mixing}.
	\end{Assumption}
	For the sake of notational simplicity, we will write $m = m_n$, $k = k_n$. The convergence of the series of $\beta$-mixing coefficients in Condition \ref{ass:consistency} is necessary to obtain the strong consistency of $\hat{\nu}_{n,m}$, and it can be achieved through the sufficiency condition of the Glivencko-Cantelli lemma for almost sure convergence.
	\begin{proposition}
	\label{prop:strong_consistency}
		Let $(\normalfont{\textbf{Z}_t}, t \in \mathbb{Z})$ be a stationary multivariate random process. Under Conditions \ref{ass:domain} and \ref{ass:consistency}, the madogram estimator in \eqref{eq:est_mado} is strongly consistent, i.e., 
		\begin{equation*}
		\left| \hat{\nu}_{n,m} - \nu \right| \overunderset{a.s.}{n \rightarrow \infty}{\longrightarrow} 0,
		\end{equation*}
		with $\nu$ the theoretical madogram of the random vector $\normalfont{\textbf{X}}$ with copula $C_\infty$ given in \eqref{eq:theoretical_mado}.
	\end{proposition}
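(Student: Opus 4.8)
\medskip
\noindent\textbf{Proof proposal.} The plan is to split the error of $\hat{\nu}_{n,m}$ into an estimation-of-margins term, an ``oracle'' fluctuation term, and a sub-asymptotic bias term, and to show that each vanishes almost surely. Writing $\hat{\nu}_{n,m}^{o}$ for the madogram built from the (unobservable) exact uniform scores $U_{m,i}^{(j)}=F_m^{(j)}(M_{m,i}^{(j)})$, the triangle inequality gives
\begin{equation*}
  \bigl|\hat{\nu}_{n,m}-\nu\bigr|\ \le\ \bigl|\hat{\nu}_{n,m}-\hat{\nu}_{n,m}^{o}\bigr|+\bigl|\hat{\nu}_{n,m}^{o}-\nu_m\bigr|+\bigl|\nu_m-\nu\bigr|.
\end{equation*}
The last, deterministic, term I would control with Condition \ref{ass:domain}: using the representation of the madogram as a linear combination of one-dimensional integrals of sections of the underlying copula (Lemma~1 of \cite{MARCON20171}, already invoked to prove $d_m=O(\Psi(m))$), $\nu_m$ and $\nu$ are the \emph{same} bounded linear functional evaluated at $C_m$ and at $C_\infty$. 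Since $C_m\to C_\infty$ pointwise on the compact set $[0,1]^d$ and all copulas are bounded by $1$, bounded convergence yields $\nu_m\to\nu$.

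For the margin-estimation term I would reuse verbatim the deterministic bound from the proof of Proposition \ref{prop:concentration_inequality}, namely $\bigl|\hat{\nu}_{n,m}-\hat{\nu}_{n,m}^{o}\bigr|\le 2\sup_{j}\sup_{x\in\R}\bigl|\hat{F}_{n,m}^{(j)}(x)-F_m^{(j)}(x)\bigr|$; and the same copula-integral representation shows the (empirical) madogram functional is Lipschitz for the supremum norm on copulas, so the oracle fluctuation is bounded by a constant multiple of $\sup_{\textbf{u}\in[0,1]^d}\bigl|\hat{C}_{n,m}^{o}(\textbf{u})-C_m(\textbf{u})\bigr|$, where $\hat{C}_{n,m}^{o}$ is the empirical distribution of $\textbf{U}_{m,1},\dots,\textbf{U}_{m,k}$. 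Hence everything reduces to two Glivenko--Cantelli statements for the triangular array of block maxima, $\sup_{j,x}\bigl|\hat{F}_{n,m}^{(j)}(x)-F_m^{(j)}(x)\bigr|\to 0$ and $\sup_{\textbf{u}}\bigl|\hat{C}_{n,m}^{o}(\textbf{u})-C_m(\textbf{u})\bigr|\to 0$, both almost surely. Following the sufficiency part of the Glivenko--Cantelli lemma (monotonicity plus approximation over a countable dense grid of thresholds), each uniform statement reduces further to pointwise almost-sure convergence, i.e.\ to a strong law of large numbers for the triangular array of bounded indicators $\bigl(\mathds{1}\{\textbf{U}_{m_n,i}\le \textbf{u}\}\bigr)_{i=1}^{k_n}$, whose $n$-th row is strictly stationary with $\beta$-mixing coefficient at lag $\ell$ bounded by $\beta\bigl((\ell-1)m_n\bigr)$ (two blocks separated by $\ell$ indices are separated by $(\ell-1)m_n$ time steps in the original process).

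The hard part, and the only place Condition \ref{ass:consistency} is needed, is this triangular-array strong law, with both $m_n\to\infty$ and $k_n=\lfloor n/m_n\rfloor\to\infty$. I would proceed by a blocking decomposition combined with Berbee's coupling lemma (see, e.g., \cite{rio2017asymptotic}): row by row, replace the dependent array by one that is independent across big blocks at a total cost governed by $\beta(m_n)$, then use $\sum_{n\ge1}\beta(m_n)<\infty$ and Borel--Cantelli to guarantee that, almost surely, the statistic coincides with its independent-block counterpart for all large $n$, and finally invoke the classical i.i.d.\ strong law, with one more Borel--Cantelli step (e.g.\ via a Dvoretzky--Kiefer--Wolfowitz-type tail bound) to pass from ``for each fixed $n$'' to ``for all large $n$''. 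The delicate point I expect to wrestle with is calibrating the block lengths so that the accumulated coupling error stays inside the summable tail $\sum_n\beta(m_n)$ while the number of big blocks per row still diverges; the remaining ingredients---the bias control, the deterministic reduction of the first two terms, and the classical Glivenko--Cantelli and i.i.d.\ SLLN---are routine. Assembling the three bounds then yields $\bigl|\hat{\nu}_{n,m}-\nu\bigr|\to 0$ almost surely.
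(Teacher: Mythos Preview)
Your approach is essentially the paper's: the same decomposition (the paper merges your last two terms by proving a single Glivenko--Cantelli lemma $\|C_{n,m}^{o}-C_\infty\|_\infty\to 0$ a.s., whose proof in turn splits off the deterministic bias $\|C_m-C_\infty\|_\infty$ via equicontinuity), the same copula-functional representation of the madogram from \cite{MARCON20171}, and the same Berbee-coupling-plus-Borel--Cantelli argument driven by $\sum_n \beta(m_n)<\infty$. The block-length calibration you anticipate is not needed: the paper couples each $m$-block directly and uses an odd/even split to get two i.i.d.\ subsequences, and because the coupling error enters only through the \emph{average} $k^{-1}\sum_{i=1}^k \mathds{1}\{\text{block }i\text{ differs}\}$, Markov's inequality already bounds the sup-norm discrepancy by $\beta(m_n)/\epsilon$ with no accumulation, so one Borel--Cantelli step plus classical Glivenko--Cantelli on the coupled rows finishes the argument.
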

    Let $C_{n,m}^o$ be the empirical estimator of the copula $C_m$ based on the (unobservable) sample $(U_{m,1}^{(j)},\dots,U_{m,k}^{(j)})$ for $j \in \{1,\dots,d\}$. The proof of Proposition \ref{prop:strong_consistency} will use twice Lemma \ref{lem:gc}, which shows that $||C_{n,m}^o - C||_{\infty}$ converges almost surely to $0$. The proof of this lemma is postponed to \ref{subsec:gc_copula} of supplementary results.

\begin{proof}[Proof of Proposition \ref{prop:strong_consistency}]
	We aim to show the following convergence
	\begin{equation*}
		|\hat{\nu}_{n,m} - \nu| \overunderset{\textrm{a.s.}}{n \rightarrow \infty}{\longrightarrow} 0.
	\end{equation*}
    Following Lemma A.1 of \cite{marcon2017multivariate}, we can show that
	\begin{equation*}
		\hat{\nu}_{n,m}^o - \nu = \phi(C_{n,m}^o - C_\infty),
	\end{equation*}
	where $\hat{\nu}_{n,m}^o$ given in \eqref{eq:mado_oracle} and $\phi : \ell^{\infty}([0,1]^d) \rightarrow \ell^\infty(\Delta_{d-1})$, $f \mapsto \phi(f)$ defined by
	\begin{equation*}
		\phi(f) = \frac{1}{d} \sum_{j=1}^d \int_{[0,1]} f(1,\dots,1,\underbrace{u}_{\textrm{$j$-th component}},1,\dots,1)du - \int_{[0,1]} f(u,\dots,u)du.
	\end{equation*}
	 Using Conditions \ref{ass:domain} and \ref{ass:consistency}, by Lemma \ref{lem:gc} in  
\ref{subsec:gc_copula},  as $||C_{n,m}^o - C_\infty||_{\infty}$ converges almost surely to $0$, we obtain that
	\begin{equation}
		\label{proof:sc_1}
		\left| \hat{\nu}_{n,m}^o - \nu \right| \overunderset{\textrm{a.s.}}{n \rightarrow \infty}{\longrightarrow} 0.
	\end{equation}
	Furthermore, using the chain of inequalities and  again Lemma \ref{lem:gc} in  
\ref{subsec:gc_copula},
	\begin{align*}
		\left| \hat{\nu}_{n,m} - \hat{\nu}^o_{n,m} \right| &\leq 2 \underset{j \in \{1,\dots,d\}}{\sup} \underset{x \in \mathbb{R}}{\sup} \left| \hat{F}_{n,m}^{(j)}(x) - F_{m}^{(j)}(x) \right| \\ &\leq 2 \underset{j \in \{1,\dots,d\}}{\sup} \underset{u \in [0,1]}{\sup} \left| \frac{1}{k} \sum_{i=1}^k \mathds{1}_{ \{ U_{m,i}^{(j)} \leq u \}} - u \right|.
	\end{align*}
Then 	we obtain that
	\begin{equation}
		\label{proof:sc_2}
		\left| \hat{\nu}_{n,m} - \hat{\nu}^o_{n,m} \right| \overunderset{\textrm{a.s.}}{n \rightarrow \infty}{\longrightarrow} 0.
	\end{equation}
	Now, write
	\begin{equation*}
		\left| \hat{\nu}_{n,m} - \nu \right| \leq \left| \hat{\nu}_{n,m} - \nu_{n,m}^o \right| + \left| \hat{\nu}_{n,m}^o - \nu \right|,
	\end{equation*}
	and use Equations \eqref{proof:sc_1} and \eqref{proof:sc_2} to obtain the statement.
\end{proof}
The strong consistency of the madogram in Proposition \ref{prop:strong_consistency} could be extended to the $\alpha$-mixing case. We present here the strong consistency of our procedure when the dimension $d$ is fixed  the sample size $n$ grows at infinity. The main technicality of the proof has already been tackled in Proposition \ref{prop:strong_consistency} and we state the precise formulation of this theorem below.

\begin{theorem}
	    \label{thm:consistency_asymptotic}
	   Consider the AI-block model as defined in Definition \ref{def:AI_block_models} under Condition \ref{ass:A} and $(\normalfont{\textbf{Z}_t}, t \in \mathbb{Z})$ be a stationary multivariate random process. For a given $\mathcal{X}$ and its corresponding estimator $\hat{\mathcal{X}}$, if Conditions \ref{ass:domain}, \ref{ass:consistency} holds, then {taking $\tau = 0$}
	   \begin{equation*}
		\underset{n \rightarrow \infty}{\lim} \, \mathbb{P}\left\{ \hat{O} = \bar{O} \right\} = 1.
	\end{equation*}	
\end{theorem}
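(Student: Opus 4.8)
The plan is to obtain this as a corollary of the pairwise strong consistency of the madogram estimator, Proposition \ref{prop:strong_consistency}, combined with the deterministic recovery guarantee of Lemma \ref{lem:exact_recovery}. First I would observe that for every pair $(a,b)\in\{1,\dots,d\}^2$ the bivariate subprocess $(Z_t^{(a)},Z_t^{(b)})_{t\in\mathbb{Z}}$ is stationary, inherits the mixing rate of $(\textbf{Z}_t,t\in\mathbb{Z})$ (a measurable transformation does not increase the $\beta$-mixing coefficients), and satisfies Condition \ref{ass:domain}, since the copula of $(M_{m,1}^{(a)},M_{m,1}^{(b)})$ is a bivariate margin of $C_m$ and margins of convergent copulae converge. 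Hence Conditions \ref{ass:domain} and \ref{ass:consistency} hold for this subprocess, and Proposition \ref{prop:strong_consistency} gives $|\hat{\nu}_{n,m}(a,b)-\nu(a,b)|\to 0$ almost surely, where $\nu(a,b)$ is the bivariate madogram of $(X^{(a)},X^{(b)})$. Since $x\mapsto(1/2+x)/(1/2-x)$ is continuous at $\nu(a,b)\in[0,1/6]$ and $\hat{\nu}_{n,m}(a,b)<1/2$ always, the continuous-mapping theorem (see also Lemma \ref{lem:upper_bounds_ext_coeff}) then yields $|\hat{\chi}_{n,m}(a,b)-\chi(a,b)|\to 0$ almost surely.

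Because $d$ is fixed there are finitely many pairs, so $\kappa_n:=\sup_{a,b}|\hat{\chi}_{n,m}(a,b)-\chi(a,b)|\to 0$ almost surely. Condition \ref{ass:A} together with the characterization \eqref{eq:indep_ext_corr} of block independence yields the dichotomy $\chi(a,b)>0$ whenever $a\overset{\bar{O}}{\sim}b$ and $\chi(a,b)=0$ otherwise, so by finiteness $\MECO(\mathcal{X})=\min_{a\overset{\bar{O}}{\sim}b}\chi(a,b)>0$. On the almost sure event $\{\kappa_n\to 0\}$, for all $n$ large enough one has $\kappa_n\le\MECO(\mathcal{X})/4$; running Algorithm \ref{alg:rec_pratic} with threshold $\tau_n=\kappa_n$ and setting $\eta=\MECO(\mathcal{X})/2$ makes the hypotheses $\tau_n\ge\kappa_n$, $\eta\ge\kappa_n+\tau_n$ and $\MECO(\mathcal{X})>\eta$ of Lemma \ref{lem:exact_recovery} all hold, whence $\hat{O}=\bar{O}$ for all $n$ large, almost surely. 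Consequently $\mathds{1}\{\hat{O}=\bar{O}\}\to 1$ almost surely, and dominated convergence gives $\mathbb{P}\{\hat{O}=\bar{O}\}\to 1$, which is the claim.

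The substance of the argument is already contained in Proposition \ref{prop:strong_consistency}; what remains is the passage from pairwise to uniform almost-sure consistency (trivial once $d$ is fixed) and the bookkeeping needed to invoke Lemma \ref{lem:exact_recovery} eventually. I expect the only genuinely delicate point to be the reading of the threshold: a literally fixed $\tau=0$ would not make the separation step robust, because $\hat{\chi}_{n,m}(a,b)$ oscillates around $0$ for $a\overset{\bar{O}}{\not\sim}b$; the statement ``taking $\tau=0$'' must be understood asymptotically, namely that since $\MECO(\mathcal{X})$ stays bounded away from $0$ while the estimation error $\kappa_n$ vanishes, any threshold squeezed between $\kappa_n$ and $\MECO(\mathcal{X})$ recovers $\bar{O}$ for $n$ large, and every such threshold tends to $0$ (the data-driven rule of Section \ref{subsec:data_driv} being one admissible choice).
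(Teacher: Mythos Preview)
Your proposal is correct and follows the same skeleton as the paper's own proof: pairwise strong consistency from Proposition \ref{prop:strong_consistency}, the dichotomy $\chi(a,b)>0\Leftrightarrow a\overset{\bar O}{\sim}b$ from Condition \ref{ass:A} and \eqref{eq:indep_ext_corr}, and then exact recovery by the algorithm. The only difference is packaging: the paper redoes the induction over the algorithm's steps directly, phrasing the case distinction in terms of $\lim_n\hat{\chi}_{n,m}(a_l,b_l)$, whereas you factor through Lemma \ref{lem:exact_recovery} with a vanishing threshold $\tau_n=\kappa_n$. Your route is slightly tidier and makes the quantifiers explicit; your caveat about a literal $\tau=0$ is well taken, since $\hat{\chi}_{n,m}(a,b)=2-\hat{\theta}_{n,m}(a,b)$ need not be eventually $\le 0$ for asymptotically independent pairs, and the paper's proof is informal on exactly this point (it argues with the limiting values rather than the finite-$n$ ones). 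Your reading---any $\tau_n\to 0$ sandwiched between $\kappa_n$ and $\MECO(\mathcal{X})$---is the rigorous version of what the paper intends.
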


\begin{proof}[Proof of Theorem \ref{thm:consistency_asymptotic}]  
    If $a$ and $b$ are not in the same cluster according to $\bar{O}$, i.e. $a \overset{\bar{O}}{\not \sim} b$, then $\chi(a,b) = 0$. Therefore, using Proposition \ref{prop:strong_consistency} along with Conditions \ref{ass:domain} and \ref{ass:consistency}, we can conclude that almost surely
    \begin{equation*}
        \underset{n \rightarrow \infty}{\lim} \, \hat{\chi}_{n,m}(a,b) = 0 \leq \tau.
    \end{equation*}
    Now, if $a \overset{\bar{O}}{\sim} b$, then $\chi(a,b) > 0$ and again by Propositions \ref{prop:strong_consistency} and Conditions \ref{ass:domain}, \ref{ass:consistency}, we obtain
    \begin{equation*}
         \underset{n \rightarrow \infty}{\lim} \, \hat{\chi}_{n,m}(a,b) = \chi(a,b) > 0,
    \end{equation*}
    where the the strict positiveness is obtain through Condition \ref{ass:A}, hence
    \begin{equation*}
        a \overset{\bar{O}}{\sim} b \iff \underset{n \rightarrow \infty}{\lim} \, \hat{\chi}_{n,m}(a,b) > \tau.
    \end{equation*}
    Let us prove Theorem \ref{thm:consistency_asymptotic} by induction on  the algorithm step $l$. We consider the algorithm at some step $l-1$ and assume that the algorithm was consistent up to this step, i.e. $\hat{O}_j = \bar{O}_j$ for $j = 1,\dots,l-1$.
		
	If $\underset{n \rightarrow \infty}{\lim} \hat{\chi}_{n,m}(a_l,b_l) = 0$, then no $b \in S$ is in the same group of $a_l$. Since the algorithm has been consistent up to this step $l$, it means that $a_l$ is a singleton and $\hat{O}_l = \{a_l\}$.
		
	If $\underset{n \rightarrow \infty}{\lim} \hat{\chi}_{n,m}(a_l,b_l) > \tau$, then $a_l \overset{\bar{O}}{\sim} b$. The equivalence above implies that $\hat{O}_l = S \cap \bar{O}_l$.  Since the algorithm has been consistent up until this step, we know that $\hat{O}_l = \bar{O}_l$. Therefore, the algorithm remains consistent at step $l$ with probability tending to one as $n \rightarrow \infty$, and Theorem \ref{thm:consistency_asymptotic} follows by induction.
\end{proof}

\section{Further results}
\label{sec:supp_mat}
  
\subsection{A usefull Glivenko-Cantelli result for the copula with known margins in a weakly dependent setting}
\label{subsec:gc_copula}
  
In this section, we will prove an important auxiliary result: the empirical copula estimator $\hat{C}_{n,m}^{o}$ based on the weakly dependent  sample $\textbf{U}_{m,1},\dots, \textbf{U}_{m,k}$ is uniformly strongly consistent towards the extreme value copula $C$. This result is a main tool to obtain important results in the paper such as Proposition \ref{prop:strong_consistency}, Theorem \ref{thm:consistency_asymptotic}.
 For that purpose, the Berbee's coupling lemma is of prime interest (see, e.g., \cite[Chapter 5]{rio2017asymptotic}) which gives an approximation of the original process by conveniently defined independent random variables.  

\begin{lemma}
\label{lem:gc}
	Under conditions of Proposition \ref{prop:strong_consistency}, we have
	\begin{equation*}
		||C_{n,m}^o - C ||_{\infty} \overunderset{\textrm{a.s.}}{n \rightarrow \infty}{\longrightarrow} 0.
	\end{equation*}
\end{lemma}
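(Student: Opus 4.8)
Proof proposal for Lemma \ref{lem:gc} (Glivenko–Cantelli for the empirical copula with known margins under $\beta$-mixing).

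The plan is to reduce the uniform strong consistency of $C_{n,m}^o$ toward $C$ to two ingredients: (i) a deterministic triangle-inequality split separating the statistical fluctuation from the domain-of-attraction bias, and (ii) a Berbee-coupling argument that transfers the classical i.i.d. Glivenko–Cantelli theorem to the weakly dependent block-maxima sample. First I would write, for $\textbf{u} \in [0,1]^d$,
\begin{equation*}
	\left| C_{n,m}^o(\textbf{u}) - C(\textbf{u}) \right| \leq \left| C_{n,m}^o(\textbf{u}) - C_m(\textbf{u}) \right| + \left| C_m(\textbf{u}) - C_\infty(\textbf{u}) \right|,
\end{equation*}
recalling that $C = C_\infty$ here. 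The second term tends to $0$ uniformly by Condition \ref{ass:domain} together with the fact that pointwise convergence of copulae on $[0,1]^d$ upgrades to uniform convergence (all $C_m$ are Lipschitz with common constant, or equivalently by Dini-type/Pólya-type arguments since $C_\infty$ is continuous on the compact cube). So the crux is the first term: show $\sup_{\textbf{u}} |C_{n,m}^o(\textbf{u}) - C_m(\textbf{u})| \to 0$ almost surely, where $C_{n,m}^o$ is the empirical distribution function of the (unobservable) uniform-margin sample $\textbf{U}_{m,1},\dots,\textbf{U}_{m,k}$ drawn from $C_m$.

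For that first term, the block maxima $(\textbf{M}_{m,i})_i$ — hence $(\textbf{U}_{m,i})_i$ — form a stationary sequence inheriting the $\beta$-mixing rate of $(\textbf{Z}_t)$, with mixing coefficients at lag $\ell$ controlled (up to constants and the block length) by $\beta(\ell m)$ or similar; Condition \ref{ass:consistency}, $\sum_n \beta(m_n) < \infty$, is precisely what makes the standard sufficiency criterion of the ergodic/Glivenko–Cantelli lemma for $\beta$-mixing sequences applicable. Concretely, I would invoke Berbee's coupling lemma (e.g. \cite[Chapter 5]{rio2017asymptotic}) to construct, on a possibly enlarged probability space, an independent block decomposition of the sample $(\textbf{U}_{m,i})_{i \le k}$ into consecutive blocks such that each coupled block equals the true block with probability at least $1-\beta(\cdot)$; the summability in Condition \ref{ass:consistency} and Borel–Cantelli then imply that, almost surely, the coupled and original empirical measures coincide eventually, reducing the problem to the empirical-process behaviour of i.i.d. (or independent-block) vectors. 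For the coupled independent pieces the uniform law of large numbers over the Vapnik–Chervonenkis class of lower-left orthants $\{(-\infty,\textbf{u}] : \textbf{u} \in [0,1]^d\}$ gives $\sup_{\textbf{u}} |C_{n,m}^o(\textbf{u}) - C_m(\textbf{u})| \to 0$ a.s.

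I expect the main obstacle to be the bookkeeping in the coupling step: the sample size $k = k_n = \lfloor n/m_n \rfloor$ and the block length $m_n$ both vary with $n$, so one must check that the mixing rate relevant to the $\textbf{U}_{m,i}$-sequence is still summable along the relevant subsequence — this is exactly where Condition \ref{ass:consistency} (phrased in terms of $\beta(m_n)$) is used, and where care is needed to ensure the Berbee coupling can be done block-by-block with the error terms still Borel–Cantelli summable. A secondary, purely technical point is handling the leftover block of length $n - k m$, which contributes $O(1/k)$ to the empirical measure and is harmless. Once these are in place, combining the uniform convergence of the bias term $\|C_m - C_\infty\|_\infty \to 0$ with the a.s. uniform convergence of $\|C_{n,m}^o - C_m\|_\infty \to 0$ yields $\|C_{n,m}^o - C\|_\infty \to 0$ almost surely, as claimed.
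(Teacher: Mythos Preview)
Your proposal is correct and follows essentially the same route as the paper: the same triangle-inequality split into the bias $\|C_m - C_\infty\|_\infty$ (handled via equicontinuity of copulae on the compact cube plus Condition \ref{ass:domain}) and the fluctuation $\|C_{n,m}^o - C_m\|_\infty$, the latter treated by Berbee's coupling, Glivenko--Cantelli on the coupled independent (odd/even) blocks, and Borel--Cantelli via the summability $\sum_n \beta(m_n) < \infty$ in Condition \ref{ass:consistency}. The only minor imprecision is that the coupled and true empirical measures need not \emph{coincide} eventually---the paper instead bounds their sup-norm difference by the average of block-mismatch indicators (expectation $\leq \beta(m_n)$), then applies Markov's inequality and Borel--Cantelli to get a.s.\ convergence of the difference to zero.
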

\begin{proof}[Lemma \ref{lem:gc}]
	Using triangle inequality, one obtain the following bound
	\begin{equation}
		\label{eq:decompositon_strong_consistency}
		||C_{n,m}^o - C ||_{\infty} \leq ||C_{n,m}^o - C_m ||_{\infty} + ||C_m - C ||_{\infty}.
	\end{equation}
	As $\{C_m, n \in \mathbb{N}\}$ is an equicontinuous class of functions (for every $m$, $C_m$ is a copula hence a 1-Lipschitz function), defined on the compact set $[0,1]^d$ (by Tychonov's theorem) which converges pointwise to $C$ by Condition \ref{ass:domain}. Then the convergence is uniform over $[0,1]^d$. Thus the second term of the RHS of Equation \eqref{eq:decompositon_strong_consistency} converges to $0$ almost surely.
	
	Now, let us prove that $||C_{n,m}^o - C_m ||_{\infty}$ converges almost surely to $0$. By Berbee's coupling lemma (see \cite[Theorem 6.1]{rio2017asymptotic} or \cite[Theorem 3.1]{bucher2014extreme} for similar applications), we can construct inductively a sequence $(\bar{\textbf{Z}}_{im+1},\dots, \bar{\textbf{Z}}_{im+m})_{i \geq 0}$ such that the following three properties hold:
	\begin{enumerate}[label=(\roman*)]
		\item $(\bar{\textbf{Z}}_{im+1},\dots, \bar{\textbf{Z}}_{im+m}) \overset{d}{=} (\textbf{Z}_{im+1},\dots, \textbf{Z}_{im+m})$ for any $i \geq 0$; \label{property:law}
		\item both $(\bar{\textbf{Z}}_{2im+1},\dots, \bar{\textbf{Z}}_{2im+m})_{i\geq 0}$ and $(\bar{\textbf{Z}}_{(2i+1)m+1},\dots, \bar{\textbf{Z}}_{(2i+1)m+m})_{i\geq 0}$ sequences are independent and identically distributed; \label{property:iid}
		\item $\mathbb{P}\{(\bar{\textbf{Z}}_{im+1},\dots, \bar{\textbf{Z}}_{im+m}) \neq (\textbf{Z}_{im+1}, \dots, \textbf{Z}_{im+m}) \} \leq \beta(m)$. \label{property:mixing}
	\end{enumerate}
	Let $\bar{C}_{n,m}^o$ and $\bar{\textbf{U}}_{m,i}$ be defined analogously to $C_{n,m}^o$ and $\textbf{U}_{m,i}$ respectively but with $\textbf{Z}_1,\dots, \textbf{Z}_n$ replaced with $\bar{\textbf{Z}}_1,\dots, \bar{\textbf{Z}}_n$. Now write
	\begin{equation}
		\label{eq:coupling}
		C_{n,m}^o(\textbf{u}) = \bar{C}_{n,m}^o(\textbf{u}) + \left\{ C_{n,m}^o(\textbf{u}) - \bar{C}_{n,m}^o(\textbf{u}) \right\}.
	\end{equation}
	We will show below that the term under brackets converges uniformly to $0$ almost surely. Write $\bar{C}_{n,m}^o(\textbf{u}) = \bar{C}_{n,m}^{o,\textrm{odd}}(\textbf{u}) + \bar{C}_{n,m}^{o,\textrm{even}}(\textbf{u})$ where $\bar{C}_{n,m}^{o,\textrm{odd}}(\textbf{u})$ and $\bar{C}_{n,m}^{o,\textrm{even}}(\textbf{u})$ are defined as sums over the odd and even summands of $\bar{C}_{n,m}^o(\textbf{u})$, respectively. Since both of these sums are based on i.i.d. summands by properties \ref{property:law} and \ref{property:iid}, we have $||\bar{C}^o_{n,m} - C_m ||_{\infty} \overunderset{\textrm{a.s.}}{n \rightarrow \infty}{\longrightarrow} 0$ using Glivenko-Cantelli (see \cite[Chapter 2.5]{bestbook}).
	
	It remains to control the term under brackets on the right hand side of Equation \eqref{eq:coupling}, we have that
	\begin{align*}
		\left| C_{n,m}^o(\textbf{u}) - \bar{C}_{n,m}^o(\textbf{u}) \right| &\leq \frac{1}{k} \sum_{i=1}^k \left| \mathds{1}_{ \{\bar{\textbf{U}}_{m,i} \leq \textbf{u} \}} - \mathds{1}_{\{ \textbf{U}_{m,i} \leq \textbf{u} \}} \right|  \\ &\leq \frac{1}{k} \sum_{i=1}^k \mathds{1}_{\{ (\bar{Z}_{im+1},\dots, \bar{Z}_{im+m}) \neq (Z_{im+1}, \dots, Z_{im+m}) \}}.
	\end{align*}
	Hence, using Markov's inequality and property \ref{property:mixing}, we have
	\begin{equation*}
		\mathbb{P}\left\{ \underset{u \in [0,1]^d}{\sup} \left| \bar{C}_{n,m}^o(\textbf{u}) - C_{n,m}^o(\textbf{u}) \right| > \epsilon \right\} \leq \frac{\beta(m)}{\epsilon}.
	\end{equation*}
	Thus by Condition \ref{ass:consistency},
	\begin{equation*}
	\sum_{n \geq 1 }\mathbb{P}\left\{ \underset{u \in [0,1]^d}{\sup} \left| \bar{C}_{n,m}^o(\textbf{u}) - C_{n,m}^o(\textbf{u}) \right| > \epsilon \right\} < \infty.
	\end{equation*}
	Applying Borel-Cantelli gives the desired convergence to $0$ almost surely of the term under bracket in Equation \eqref{eq:coupling}. Gathering all results gives that the term $||C_{n,m}^o - C_m ||_{\infty}$ converges almost surely to 0. Hence the statement using Equation \eqref{eq:decompositon_strong_consistency}.
\end{proof}

\subsection{Weak convergence of an estimator  of $\mathpzc{A}^{(O)} - \mathpzc{A}$}
\label{subsec:functional_central_limit_theorem}
 
We now state conditions on the block size $m$ and the number of blocks $k$, as in \cite{bucher2014extreme}, to demonstrate the weak convergence of the empirical copula process based on the (unobservable) sample $(U_{n,m,1}^{(j)}, \dots, U_{n,m,k}^{(j)})$ for every $j \in \{1,\dots,d\}$ under mixing conditions. An additional condition will be required within the theorem to establish the weak convergence of the rank-based copula estimator under the same mixing conditions.
	
\begin{Assumption}{$\mathcal{F}$}
    \label{ass:regularity}
    There exists a positive integer sequence $\ell_n$ such that the following statement holds:
    \begin{enumerate}[label=(\roman*)]
        \item $m_n \rightarrow \infty$ and $m_n = o (n)$ \label{ass:m=o(n)}
        \item $\ell_n \rightarrow \infty$ and $\ell_n = o(m_n)$
        \item $k_n \alpha(\ell_n) = o(1)$ and $(m_n / \ell_n) \alpha(\ell_n) = o(1)$ \label{ass:fidi}
        \item $\sqrt{k_n} \beta(m_n) = o(1)$ \label{ass:tightness} \label{ass:series}
    \end{enumerate}
\end{Assumption}
We recall that both $m$ and $k$ depends on $n$. Also, for notational convenience, we will write in the following $\ell_n = \ell$. Note that Condition \ref{ass:regularity} \ref{ass:fidi} guarantees that the limit $C$ is an extreme value copula by \cite[Theorem 4.2]{hsing1989extreme}. As usual, the weak convergence of the empirical copula process stems down from the finite dimensional convergence and the asymptotic tightness of the process which then hold from Condition \ref{ass:regularity} \ref{ass:fidi} and \ref{ass:tightness} respectively. In order to apply Hadamard's differentiability to obtain the weak convergence of the empirical copula based on the sample's scaled ranks, we need a classical condition over the derivatives of the limit copula stated as follows. 

\begin{Assumption}{$\mathcal{G}$}
\label{ass:smooth}
For any $j \in \{1,\dots,d\}$, the $j$th first order partial derivative $\dot{C}^{(j)} = \partial C / \partial u^{(j)}$ exists and is continuous on $\{ \normalfont{\textbf{u}} \in [0,1]^d, u^{(j)} \in (0,1)\}$.
\end{Assumption}

The estimator of the Pickands dependence function that we present is based on the madogram concept (\cite{cooley2006variograms, marcon2017multivariate}), a notion borrowed from geostatistics in order to capture the spatial dependence structure. Our estimator is defined as 
\begin{equation*}
	\hat{\mathpzc{A}}_{n,m}(\textbf{t}) = \frac{\hat{\nu}_{n,m}(\textbf{t}) + c(\textbf{t})}{1-\hat{\nu}_{n,m}(\textbf{t})-c(\textbf{t})},
\end{equation*}
where
\begin{align*}
	&\hat{\nu}_{n,m}(\textbf{t}) = \frac{1}{k} \sum_{i=1}^k \left[\bigvee_{j=1}^d \left\{ \hat{U}_{n,m,j}^{(j)} \right\}^{1/t^{(j)}} - \frac{1}{d} \sum_{j=1}^d \left\{ \hat{U}_{n,m,i}^{(j)} \right\}^{1/t^{(j)}} \right], \quad c(\textbf{t}) = \frac{1}{d} \sum_{j=1}^d \frac{t^{(j)}}{1+t^{(j)}},
\end{align*}
and $\hat{U}_{n,m,i}^{(j)} = \hat{F}_{n,m}^{(j)}(M_{m,i}^{(j)})$ corresponds to ranks scaled by $k^{-1}$. By convention, here $u^{1/0} = 0$ for $u \in (0,1)$. Let $g \in \{1,\dots,G\}$ and define 
\begin{equation*}
    \hat{\mathpzc{A}}_{n,m}^{(O_g)}\left(\textbf{t}^{(O_g)}\right) = \hat{\mathpzc{A}}_{n,m}\left(\textbf{0},\textbf{t}^{(O_g)}, \textbf{0}\right)
\end{equation*}
the empirical Pickands dependence function associated to the $k$-th subvector of $\textbf{X}$p. We consider the empirical process of the difference between estimates of the Pickands dependence functions of subvectors $\textbf{X}^{(O_g)}, g \in \{1,\dots,G\},$ and the estimator of the Pickands dependence function of $\textbf{X}$:
\begin{equation*}
	\mathcal{E}_{nG} (\textbf{t}) = \sqrt{k}\left(\hat{\mathpzc{A}}^{(O)}_{n,m}(\textbf{t}) - \hat{\mathpzc{A}}_{n,m}(\textbf{t})\right),
\end{equation*}
where $\hat{\mathpzc{A}}_{n,m}^{(O)}(\textbf{t}) = \sum_{g=1}^G w^{(O_g)}(\textbf{t}) \hat{\mathpzc{A}}_{n,m}^{(O_g)}(\textbf{t}^{(O_g)})$. Noticing that multiplying the above process by $d$ and taking $\textbf{t} = (d^{-1},\dots,d^{-1})$ gives 

\begin{equation*}
	\sqrt{k} \widehat{SECO}(O) = \sqrt{k}\left( \sum_{g=1}^G \hat{\theta}_{n,m}^{(O_g)} - \hat{\theta}_{n,m} \right).
\end{equation*}
	Hence, the weak convergence of the above empirical process will immediately comes  down from the one of the empirical process in $\mathcal{E}_{nG}$, as stated in the theorem below.
\begin{theorem}
	\label{thm:weak_conv}
	Consider the AI-block model in Definition \ref{def:AI_block_models} with a given partition $O$, i.e., $\mathpzc{A} = \mathpzc{A}^{(O)}$ where the latter is defined in Equation  \eqref{eq:conv_pick}. Under Conditions \ref{ass:domain}, \ref{ass:regularity}, \ref{ass:smooth} and $\sqrt{k}(C_m - C) \rightsquigarrow \Gamma$, the empirical process $\mathcal{E}_{nG}$ converges weakly in $\ell^{\infty}(\Delta_{d-1})$ to a tight Gaussian process having representation
	\begin{align*}
		\mathcal{E}_G(\normalfont{\textbf{t}}) &= \left(1+\mathpzc{A}(\normalfont{\textbf{t}})\right)^2 \int_{[0,1]} (N_{C_\infty} + \Gamma)(u^{t^{(1)}}, \dots, u^{t^{(d)}})du \\ &- \sum_{g=1}^G w^{(O_g)}(\normalfont{\textbf{t}})\left(1+\mathpzc{A}^{(O_g)}(\normalfont{\textbf{t}}^{(O_g)})\right)^2 \int_{[0,1]} (N_{C_\infty} + \Gamma)(\normalfont{\textbf{1}},u^{t^{(i_{g,1})}}, \dots, u^{t^{(i_{g,d_g})}}, \normalfont{\textbf{1}})du,
	\end{align*}
	where $N_{C_\infty}$ is a continuous tight Gaussian process with representation 
	\begin{equation*}
		N_{C_\infty}(u^{(1)},\dots,u^{(d)}) = B_{C_\infty}(u^{(1)}, \dots, u^{(d)}) - \sum_{j=1}^d \dot{C}_\infty^{(j)}(u^{(1)}, \dots, u^{(d)}) B_{C_\infty}(\normalfont{\textbf{1}}, u^{(j)}, \normalfont{\textbf{1}}),
	\end{equation*}
	and $B_{C_\infty}$ is a continuous tight Gaussian process with covariance function
	\begin{equation*}
		\Cov(B_{C_\infty}(\normalfont{\textbf{u}}), B_{C_\infty}(\normalfont{\textbf{v}})) = C_\infty(\normalfont{\textbf{u} \wedge \textbf{v}}) - C_\infty(\normalfont{\textbf{u}})C_\infty(\normalfont{\textbf{v}}) = C_{\Pi}(\normalfont{\textbf{u} \wedge \textbf{v}}) - C_{\Pi}(\normalfont{\textbf{u}})C_{\Pi}(\normalfont{\textbf{v}}),
	\end{equation*}
 where $C_{\Pi}(\textbf{u}^{(O_g)}) = \Pi_{g=1}^G C_\infty^{(O_g)}(\textbf{u}^{(O_g)})$.
\end{theorem}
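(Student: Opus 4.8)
The plan is to build the result on top of the weak convergence of the rank-based empirical copula process $\sqrt{k}(\hat{C}_{n,m} - C_m) \rightsquigarrow N_C$ in $\ell^\infty([0,1]^d)$, which is the standard output of combining Condition~\ref{ass:regularity}, Berbee's coupling (as in Lemma~\ref{lem:gc}), the finite-dimensional convergence and asymptotic tightness of the ``oracle'' process $\sqrt{k}(C_{n,m}^o - C_m)$, and the Hadamard differentiability of the map $C \mapsto C - \sum_j \dot C^{(j)}(\cdot)\,C(\mathbf{1},\cdot,\mathbf{1})$ under Condition~\ref{ass:smooth} (see \cite{buecher2014extreme, fermanian2004weak}). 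Together with the hypothesis $\sqrt{k}(C_m - C)\rightsquigarrow\Gamma$ and the decomposition $\sqrt{k}(\hat C_{n,m}-C) = \sqrt{k}(\hat C_{n,m}-C_m)+\sqrt{k}(C_m-C)$, this gives $\sqrt{k}(\hat C_{n,m}-C)\rightsquigarrow N_C+\Gamma$ jointly; the representation of $N_C$ in terms of $B_C$ is exactly the classical one and the null-hypothesis simplification of the covariance comes from $C=C_\Pi$ under $A=A^{(O)}$ (Proposition~\ref{prop:Cevt} and Proposition~\ref{prop:ineq}).

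Next I would express $\mathcal{E}_{nG}$ as a continuous, in fact Hadamard-differentiable, functional of $\hat C_{n,m}$ so that the functional delta method applies. The key observation is the integral representation of the madogram-based Pickands estimator: following Lemma~A.1 of \cite{MARCON20171} one writes $\hat\nu_{n,m}(\mathbf{t}) - \nu(\mathbf{t}) = \phi_{\mathbf{t}}(\hat C_{n,m} - C) + o_{\mathbb P}(k^{-1/2})$, where $\phi_{\mathbf t}$ is the bounded linear map $f \mapsto \tfrac1d\sum_{j=1}^d \int_0^1 f(\mathbf 1, u^{t^{(j)}}, \mathbf 1)\,du - \int_0^1 f(u^{t^{(1)}},\dots,u^{t^{(d)}})\,du$ (with the convention $u^{1/0}=0$); the negligible remainder is controlled by the uniform strong consistency established in Lemma~\ref{lem:gc} together with the $\sqrt{k}$-rate. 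Then $\hat A_{n,m}(\mathbf t)$ is obtained from $\hat\nu_{n,m}(\mathbf t)$ through the smooth scalar map $x\mapsto (x+c(\mathbf t))/(1-x-c(\mathbf t))$, whose derivative at the true value $\nu(\mathbf t)$ equals $(1+A(\mathbf t))^2$; and $\hat A_{n,m}^{(O)}(\mathbf t)=\sum_g w^{(O_g)}(\mathbf t)\hat A_{n,m}^{(O_g)}(\mathbf t^{(O_g)})$ with $\hat A_{n,m}^{(O_g)}(\mathbf t^{(O_g)})=\hat A_{n,m}(\mathbf 0,\mathbf t^{(O_g)},\mathbf 0)$, so the weights $w^{(O_g)}(\mathbf t)$ are deterministic and factor out. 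Chaining these maps and applying the delta method yields $\mathcal{E}_{nG}=\sqrt{k}(\hat A_{n,m}^{(O)}-\hat A_{n,m})\rightsquigarrow \mathcal{E}_G$ with precisely the stated representation: the $(1+A(\mathbf t))^2\int_0^1 (N_C+\Gamma)(u^{t^{(1)}},\dots,u^{t^{(d)}})\,du$ term comes from differentiating the scalar map at $\mathbf t$, and the subtracted sum from the same differentiation applied to each subvector at $(\mathbf 0,\mathbf t^{(O_g)},\mathbf 0)$, using that $A=A^{(O)}$ so the leading $A$-dependent constants are consistent across blocks. Tightness in $\ell^\infty(\Delta_{d-1})$ transfers from that of $N_C$ and $\Gamma$ because $\phi_{\mathbf t}$ and the scalar maps depend continuously (uniformly) on $\mathbf t$ on the compact simplex, away from its degenerate faces where the convention $u^{1/0}=0$ keeps everything continuous.

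The main obstacle I anticipate is making the linearization $\hat\nu_{n,m}(\mathbf t)-\nu(\mathbf t)=\phi_{\mathbf t}(\hat C_{n,m}-C)+o_{\mathbb P}(k^{-1/2})$ rigorous \emph{uniformly in} $\mathbf t\in\Delta_{d-1}$, including the boundary behaviour where some $t^{(j)}\to 0$: the maps $u\mapsto u^{1/t^{(j)}}$ become highly non-uniform there, and one must verify that the Hadamard differentiability of $C\mapsto\hat\nu(\cdot)$ holds \emph{tangentially} to the space of continuous functions vanishing appropriately on the boundary faces, matching the known regularity of the limit process $N_C$. This is the part that requires the careful functional-analytic bookkeeping of \cite{MARCON20171} adapted to the sub-asymptotic copula $C_m$; once that uniform linearization and the joint weak convergence $\sqrt{k}(\hat C_{n,m}-C)\rightsquigarrow N_C+\Gamma$ are in hand, the rest is a routine application of the continuous mapping and delta methods together with the covariance computation under $\mathcal H_0$.
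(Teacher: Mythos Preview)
Your proposal is correct and follows essentially the same route as the paper: obtain the weak limit of $\sqrt{k}(\hat{A}_{n,m}-A)$ from the empirical copula process via the madogram linearization of \cite{MARCON20171} combined with \cite{bucher2014extreme}, and then pass to $\mathcal{E}_{nG}$ by a continuous (in fact linear) map. The paper's proof is slightly more compact because it cites the convergence $\sqrt{k}(\hat{A}_{n,m}-A)\rightsquigarrow -(1+A)^2\int_0^1(N_C+\Gamma)(u^{t^{(1)}},\dots,u^{t^{(d)}})\,du$ directly and then applies only the continuous mapping theorem through the explicit linear maps $\phi:x\mapsto(x,w^{(O_1)}x(\mathbf 0,\cdot,\mathbf 0),\dots,w^{(O_G)}x(\mathbf 0,\cdot,\mathbf 0))$ and $\psi:(x,y_1,\dots,y_G)\mapsto\sum_g y_g-x$, also recording the small observation that $(C_m-C)(\mathbf 1,u,\mathbf 1)=0$ forces $\Gamma(\mathbf 1,u,\mathbf 1)=0$; your version unpacks this one level further but the structure is identical.
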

\begin{proof}[Theorem \ref{thm:weak_conv}]
    The proof is straightforward, notice that by the triangle diagram in Figure~\ref{fig:diagram}
    \begin{equation*}
        \mathcal{E}_{nG} = \psi \circ \phi \left(\sqrt{k}(\hat{\mathpzc{A}}_{n,m} - \mathpzc{A}) \right),
    \end{equation*}
    where $\phi$ is detailed as
    \begin{equation*}
        \begin{array}{ccccc}
            \phi & : & \ell^{\infty}(\Delta_{d-1}) & \to & \ell^{\infty}(\Delta_{d-1}) \otimes (\ell^{\infty}(\Delta_{d-1}), \dots, \ell^{\infty}(\Delta_{d-1})) \\
            & & x & \mapsto & (x, \phi_1(x), \dots, \phi_G(x)), \\
        \end{array}
    \end{equation*}
    with for every $g\in \{1,\dots,G\}$
    \begin{equation*}
        \begin{array}{ccccc}
            \phi_g & : & \ell^{\infty}(\Delta_{d-1}) & \to & \ell^{\infty}(S_{d}) \\
            & & x & \mapsto & w^{(O_g)}(t^{(1)}, \dots, t^{(G)}) x(\textbf{0}, t^{(i_{g,1})},\dots, t^{(i_{g,d_g})}, \textbf{0}), \\
        \end{array}
    \end{equation*}
    and also
    \begin{equation*}
        \begin{array}{ccccc}
            \psi & : & \ell^{\infty}(\Delta_{d-1}) \otimes (\ell^{\infty}(\Delta_{d-1}), \dots, \ell^{\infty}(\Delta_{d-1})) & \to & \ell^{\infty}(\Delta_{d-1}) \\
            & & (x, \phi_1(x), \dots, \phi_G(x)) & \mapsto & \sum_{g=1}^G \phi_g(x) - x. \\
        \end{array}
    \end{equation*}
     \begin{figure}
    \centering
        \begin{tikzpicture}[node distance=2cm, auto]
            (0,0) \node(A) {$\sqrt{k}\left(\hat{\mathpzc{A}}_{n,m} - \mathpzc{A}\right)$};
            (5,0)\node(B) [right of=A, xshift = 1cm ] {$\mathcal{E}_{nG}$};
            \node (C) [below of=B] {$\left(\sqrt{k}\left(\hat{\mathpzc{A}}_{n,m} - \mathpzc{A}\right) ; w^{(O_1)}\sqrt{k}\left(\hat{\mathpzc{A}}_{n,m}^{(O_1)} - \mathpzc{A}^{(O_1)}\right), \dots, w^{(O_G)}\sqrt{k}\left(\hat{\mathpzc{A}}_{n,m}^{(O_G)} - \mathpzc{A}^{(O_G)}\right)\right)$};	
            \draw[->](A) to node {}(B);
            \draw[->](A) to node [left] {$\phi$}(C);
            \draw[->](C) to node [below right=0.5ex] {$\psi$}(B);
        \end{tikzpicture}
        \caption{Commutative diagram of composition of function.}
        \label{fig:diagram}
    \end{figure}
    
    The function $\phi_g$ is a linear and bounded function hence continuous for every $g$, it follows that $\phi$ is continuous  since each coordinate functions  is continuous.  As a linear and bounded function, $\psi$ is also a continuous function. Noticing that, 
    \begin{equation*}
        (C_m - C_\infty)(\textbf{1},u,\textbf{1}) = 0, \quad \forall n \in \mathbb{N},  
    \end{equation*}
  where $m$ is the block length for a sample size $n$. We thus have
    \begin{equation*}
        \sqrt{k}(C_m-C_\infty)(\textbf{1},u,\textbf{1}) \underset{n \rightarrow \infty}{\longrightarrow} 0.
        \end{equation*}
        Therefore $\Gamma(\textbf{1},u,\textbf{1}) = 0$. Combining this equality with Corollary 3.6 of \cite{bucher2014extreme} and the same techniques as in the proof of Theorem 2.4 in \cite{marcon2017multivariate}, we obtain along with Conditions \ref{ass:domain}, \ref{ass:regularity}, \ref{ass:smooth}
    \begin{equation*}		
     \sqrt{k}(\hat{\mathpzc{A}}_{n,m}(\textbf{t}) - \mathpzc{A}(\textbf{t})) \rightsquigarrow -\left(1+\hat{\mathpzc{A}}_{n,m}(\textbf{t}))\right)^2 \int_{[0,1]}(N_{C_\infty} + \Gamma)(u^{t^{(1)}}, \dots, u^{t^{(d)}}) du.
    \end{equation*}
    Applying the continuous mapping theorem for the weak convergence in $\ell^{\infty}(\Delta_{d-1})$ (Theorem 1.3.6 of \cite{bestbook}) leads the result.
\end{proof}

\end{document}